\newtheorem{theorem}{Theorem}[section]
\newtheorem{proposition}[theorem]{Proposition}
\newtheorem{lemma}[theorem]{Lemma}
\newtheorem{definition}[theorem]{Definition}
\newtheorem{notation}[theorem]{Notation}
\newtheorem{corollary}[theorem]{Corollary}
\title[Smoluchowski's equation]{Smoluchowski's equation: rate of convergence of the Marcus-Lushnikov process}
\author{Eduardo Cepeda and Nicolas Fournier}
\address{Laboratoire d'Analyse et de Math\'ematiques Appliqu\'ees, UMR 8050. Universit\'e Paris-Est. 61, avenue du G\'en\'eral de Gaulle, 94010 Cr\'eteil C\'edex}
\email{eduardo.cepeda-chiluisa@etu.univ-paris12.fr, nicolas.fournier@univ-paris12.fr}
\begin{document}

\begin{abstract} 
We derive a satisfying rate of convergence of the Marcus-Lushnikov process toward the solution to Smoluchowski's coagulation equation. Our result applies to a class of homogeneous-like coagulation kernels with homogeneity degree ranging in $(-\infty,1]$. It relies on the use of a Wasserstein-type distance, which has shown to be particularly well-adapted to coalescence phenomena. It was introduced and used in preceeding works Fournier and Lauren\c{c}ot (2006) and Fournier and L\"ocherbach (2009). \medskip 

\noindent
\textbf{Mathematics Subject Classification (2000)}: 60H30, 45K05.\medskip

\noindent
\textbf{\textit{Keywords}}: Smoluchowski's coagulation equation, Marcus-Lushnikov process, Interacting stochastic particle systems.
\end{abstract}
\maketitle
To appear in \textit{``Stochastic Procceses and their Applications"} (accepted on march 2011).
\nocite{Aldous} \nocite{Rate}  \nocite{Eibeck_Wagner} \nocite{Sto-Coal}  \nocite{Conv_ML} \nocite{Well-Pdnss} \nocite{Sto-Coal2} \nocite{Jacod} \nocite{Jeon} \nocite{Jourdain} \nocite{Kolokoltsov} \nocite{Lushnikov} \nocite{Marcus} \nocite{Ziemer} 
\section{Introduction}\label{Introduction}
 
We are interested in coalescence which is a widespread phenomenon: it arises in physics, chemistry, astrophysics, biology and mathematics.\medskip

We consider a possibly infinite system of particles, each particle being fully identified by its mass ranging in the set of positive real numbers. The only mechanism taken into account is the coalescence of two particles with masses $x$ and $y$ into a single one with mass $x+y$ at some given rate (the ``coagulation kernel'') $K(x,y)=K(y,x)\geq 0$.\medskip

\begin{itemize}
\item We can consider a system of microscopic particles and the following system of differential equations for the concentrations  $\mu_t(x)$ of particles of mass $x = 1, 2, 3,...$ at time $t\in[0,+\infty)$:
\begin{equation} \label{Intro:SmoEq_discrete}
\partial_t \mu_t(x) = \dfrac{1}{2} \sum_{y=1}^{x-1} K(y,x-y) \mu_t(y) \mu_t(x-y) - \mu_t(x)\sum_{y=1}^{+\infty}  K(x,y) \mu_t(y).
\end{equation}
The first sum in (\ref{Intro:SmoEq_discrete}) on the right corresponds to coagulation of smaller particles to produce one of mass $x$, whereas the second sum corresponds to removal of particles of mass $x$ as they in turn coagulate to produce larger particles.\medskip

Analogous integro-differential equations allow us to consider a continuum of masses $x$. In this case the system can also be described by the concentration $\mu_t(x)$ of particles of mass $x\in(0,+\infty)$ at time $t\in[0,+\infty)$. Then $\mu_t(x)$ solves a nonlinear equation:
\begin{equation} \label{Intro:SmoEq}
\partial_t \mu_t(x) = \dfrac{1}{2} \int_0^x K(y,x-y) \mu_t(y) \mu_t(x-y) dy - \mu_t(x)\int_0^{+\infty} K(x,y) \mu_t(y)dy.
\end{equation}
Equation (\ref{Intro:SmoEq}) is known as the continuous Smoluchowski coagulation equation and (\ref{Intro:SmoEq_discrete}) is its discrete version.\medskip

\item When the particles are macroscopic and when the rate of coagulation is not infinitesimal, the frame of study of the dynamics of such a system is stochastic. When the initial state consists of a finite number of macroscopic particles, the stochastic coalescent obviously exists (see \cite{Aldous}) and it is known as the Marcus-Lushnikov process.\medskip
\end{itemize}

In preceding works several results have been obtained on the existence and uniqueness of weak solutions to Smoluchowski's coagulation equation. The general framework was formulated in \cite{Norris} who obtained some remarkable well-posedness results. In \cite{Well-Pdnss}, homogeneous-like kernels are considered and it has been seen that the well-posedness holds in the class of measures having a finite moment of order the degree of homogeneity of the coagulation kernel.\medskip

Aldous \cite{Aldous} presents the Marcus-Lushnikov process as an approximation for the solution of Smoluchowski's equation (see \cite{Marcus, Lushnikov} for further information). Since then some results on convergence have been obtained in \cite{Norris} and \cite{Jeon}, see also \cite{Conv_ML}. A class of stochastic algorithms in which the number of particles remains constant in time was introduced in \cite{Eibeck_Wagner} and has been extended to the discrete coagulation-fragmentation case in \cite{Jourdain}.\medskip

We investigate the rate of convergence of the Marcus-Lushnikov process to the solution of the Smoluchowski coagulation equation as the number of particles tends to infinity. This problem is interesting because on the one hand it has a physical meaning: the Smoluchowski equation is often derived by passing to the limit in the Marcus-Lushnikov process, and on the other hand from a numerical point of view: this stochastic process can be simulated exactly. Thus it seems natural to use it in order to approximate the solution to Smoluchowski's coagulation equation.\medskip

Our study is based on the use of a specific Wasserstein-type distance $d_{\lambda}$ between the solution to Smoluchowski's equation and its stochastic approximation. This distance depends on the homogeneity parameter $\lambda$ of the coagulation kernel. This specific distance has been introduced in \cite{Well-Pdnss} to prove some results on the well-posedness of the Smoluchowski coagulation equation and in \cite{Sto-Coal, Sto-Coal2} to study the stochastic coalescent. The result of the present work applies to a family of homogeneous-like coagulation kernels. These kernels are of particular importance in applications see Table 1 in \cite{Aldous} or the list provided in \cite{Well-Pdnss}. \medskip

We point out that since we are using a finite particle system to approximate the evolution in time of the solution to the Smoluchowski equation which describes an infinite particle system, it is necessary to dispose of a mechanism to construct an initial condition for the Marcus-Lushnikov process from a general measure-valued initial condition of Smoluchowski's equation. This initial condition needs to satisfy, on the one hand, a convergence condition to assure the convergence of the stochastic process to the solution to Smoluchowski's equation for all time $t$ as the number of particles grows (the usual condition of weak convergence is replaced by convergence in the sense of the distance we use), and on the other hand it must obey a rate of convergence in order to control the overall rate of convergence of such an approximation.\medskip

Very roughly, we consider a homogeneous-like coagulation kernel with degree of homogeneity $\lambda \in (-\infty,1]\setminus \{0\}$ (including $K(x,y) = (x+y)^{\lambda}$). For $(\mu_t)_{t\geq 0}$ the solution to the corresponding Smoluchowski's equation and for $(\mu^n_t)_{t\geq 0}$ the corresponding Marcus-Lushnikov process, we prove that
\begin{equation*}
\sup_{t\in[0,T]} \mathbb E\left[d_{\lambda}(\mu^n_t,\mu_t)  \right]  \leq \frac{C_{T}}{\sqrt{n}},
\end{equation*}
as soon as $\mu_0$ satisfies some technical conditions and for a good choice of the initial state of the Marcus-Lushnikov process $\mu_0^n$ of the form $\frac{1}{n} \sum_{k=1}^N \delta_{x_{k}} $. We can make the following remarks.\medskip

\textit{1.} Recalling the \textit{Central Limit Theorem} (CLT), this rate of convergence seems to be optimal, since the convergence of $\mu^n_t$ to $\mu_t$ is a generalized Law of Large Numbers.\medskip

\textit{2.} In \cite{Well-Pdnss} it has been seen that only one moment is demanded to show the well-posedness for the Smoluchowski equation. In the present work, we need to demand more moments, but we believe that it is very difficult to avoid such conditions.\medskip

\textit{3.} The only works giving an explicit result on the rate of convergence of the Marcus-Lushnikov process toward the solution to Smoluchowski's coagulation equation, known by us, are:
\begin{itemize}
\item[-] Norris \cite{Norris}, who gives an estimate using a ``Large Deviations" approach for the discrete case ($\textrm{supp}(\mu_0)\subset \mathbb N$).
\item[-] Deaconu, Fournier and Tanr\'e \cite{Rate}, where a CLT-type result is shown for the discrete case and for a bounded coagulation kernel $K$, furthermore in this work a different particle system is used.
\item[-] Kolokoltsov \cite{Kolokoltsov}, who uses analytic methods of the theory of semigroups applied to the
Markov infinitesimal generator. He also uses a different distance to ours, namely the author uses the topology of the dual to the weighted spaces of continuously differentiable functions or certain weighted Sobolev spaces. He then gives a CLT result for the discrete case with a coagulation kernel satisfying $K(x,y)\leq c(1+\sqrt{x})(1+\sqrt{y})$ and for the continuous case when $K$ is two times differentiable with all its derivatives bounded. Unfortunately the case $K(x,y) = (x+y)^{\lambda}$ is excluded for any value of $\lambda \in (-\infty,1]\setminus\{0\}$.
\end{itemize}
Our work thus gives the first result on the rate of convergence covering the continuous case for some homogeneous kernels. \medskip

For the case $\lambda<0$ we follow the ideas found in \cite{Well-Pdnss}, but for the case $\lambda \in (0,1]$ the proof is much more difficult and the calculations are faced in a completely different way. Namely we use the It\^o formula for an approximation of the absolute value function and handle very delicately the resulting terms.\medskip

The paper is organized as follows: in Section \ref{Notation_Assumptions_and_Definitions} we give the notation and definitions we use in this document, in Section \ref{Results} we state our main result. The proof is developed in Sections \ref{Negative_Case}, \ref{Positive_Case} and \ref{Special_Case}. We give also a method to construct an initial condition for the Marcus-Lushnikov process in Section \ref{Choice_Mu0} and we conclude the document giving some technical details which are useful all along the paper in Appendix \ref{Preliminaries}. 

\section{Notation, Assumptions and Definitions}\label{Notation_Assumptions_and_Definitions}
\setcounter{equation}{0}

In this section we present our assumptions, give the definition of weak solutions to Smoluchowski's coagulation equation and then we recall the dynamics of the Marcus-Lushnikov process.

\begin{notation}\label{notation}
We denote by $\mathcal M^+$ the space of non-negative Radon measures on $(0,+\infty)$. For a measure $\mu$ and a function $\phi$, we set $\left\langle\mu(dx)\,,\,\phi(x) \right\rangle = \int_0^{+\infty}\phi(x)\mu(dx)$. We also define the operator $A$ for all measurable functions $\phi:(0,+\infty) \rightarrow \mathbb R$, by
\begin{equation} \label{Intro:Op-A}
(A\phi)(x,y) = \phi(x+y)-\phi(x) -\phi(y)\,\,\,\, \forall\, (x,y) \in (0,+\infty)^2.
\end{equation}
Finally, we will use the notation $x\wedge y = \min\{ x,y\}$ and $x \vee y = \max\{ x,y\}$ for $(x,y) \in (0,+\infty)^2$.
\end{notation}

We consider a coagulation kernel $K:(0,+\infty)\times (0,+\infty) \rightarrow [0,+\infty)$, symmetric i.e. $K(x,y)=K(y,x)$ for $(x,y)\in(0,+\infty)^2$. We further assume it belongs to $W^{1,\infty} \left((\varepsilon,1/\varepsilon)^2\right)$ for every $\varepsilon \in (0,1)$ and one of the following conditions  $\forall\,(x,y)\in(0,+\infty)^2$:
\begin{eqnarray}
\hspace{1cm} \lambda\in(-\infty,0), &  K(x,y)\leq \kappa_0\,(x+y)^{\lambda} \textrm{ and } \left(x^{\lambda}+y^{\lambda}\right)\left|\partial_x K(x,y)\right|\leq \kappa_1 x^{\lambda-1}y^{\lambda},\label{case1}\\[3mm]
\lambda\in(0,1],\hspace{0.5cm}  &  K(x,y)\leq \kappa_0\,(x+y)^{\lambda} \textrm{ and } \left(x^{\lambda} \wedge y^{\lambda}\right)\left|\partial_x K(x,y)\right|\leq \kappa_1 x^{\lambda-1}y^{\lambda},\label{case2}\\[3mm]
 \lambda\in(0,1],\hspace{0.5cm} &  K(x,y)\leq \kappa_0\,(x\wedge y)^{\lambda} \textrm{ and } \left(x^{\lambda} \wedge y^{\lambda}\right)\left|\partial_x K(x,y)\right|\leq \kappa_1 x^{\lambda-1}y^{\lambda},\label{special_case}
\end{eqnarray} 
for some positive constants $\kappa_0$ and $\kappa_1$. We refer to \cite{Well-Pdnss} for a list of physical kernels satisfying conditions (\ref{case1}) and (\ref{case2}). Remark that for any $\lambda\in(-\infty,1] \setminus  \{0\}$, $K(x,y) = (x+y)^{\lambda}$ satisfies (\ref{case1}) or (\ref{case2}).
\begin{definition}\label{Intro:MetricDef}
Consider $\lambda \in (-\infty,1]\setminus\{0\}$. For $\mu \in \mathcal M^+$, we set:
\begin{equation}\label{Intro:MomentDef}
 M_{\lambda}(\mu) = \int_0^{+\infty} x^{\lambda} \mu(dx) \hspace{7.5mm}\textrm{and}\hspace{7.5mm}
 \mathcal M^{+}_{\lambda} = \{ \nu \in \mathcal M^{+}: M_{\lambda}(\nu) < +\infty  \}.
\end{equation}
For $\mu \in \mathcal M^+$, we set, for $x\in (0,+\infty)$:
\begin{equation}\label{Intro:MetricEq1} 
F^{\mu}(x) = \int_0^{+\infty} \mathds 1_{(x,+\infty)}(y)\, \mu(dy) \hspace{3mm} \textrm{and } \hspace{3mm} G^{\mu}(x) = \int_0^{+\infty} \mathds 1_{(0,x]}(y)\, \mu(dy). 
\end{equation} 
We define the distance on $\mathcal M^+_{\lambda}$ as
\begin{equation}\label{Intro:MetricEq3}
d_{\lambda}(\mu,\tilde{\mu}) = \int_0^{+\infty} x^{\lambda-1} |E(x)| dx,
\end{equation}
where $ E(x) = G^{\mu}(x) - G^{\tilde{\mu} }(x)$ if $\lambda\in(-\infty,0)$ and $ E(x) = F^{\mu}(x) - F^{\tilde{\mu}}(x)$ if  $\lambda\in(0,1]$.
\end{definition}

We remark that $d_{\lambda}$ is well-defined on $\mathcal M^+_{\lambda}$. Indeed we have $d_{\lambda}(\mu,\tilde{\mu}) \leq \frac{1}{|\lambda|} M_{\lambda}\left(\mu + \tilde{\mu}\right)$ for $\lambda \in (-\infty,1]\setminus\{0\}$. See \cite{Dist-Coag} for a deeper study of this distance in the discrete and continuous cases.\medskip

We excluded the case $\lambda=0$ for two reasons. First, $d_0$ is not well-defined on $\mathcal M^+_0$. Next, when trying to extend our study to this case, we are not able to obtain a better result than those of Kolokoltsov \cite{Kolokoltsov}.

\begin{definition}
For $\lambda\in(-\infty,1]\setminus \{0\}$ we introduce the spaces of test functions needed to define weak solutions:
\[
\begin{array}{ll}
if\,\,\,\lambda \in(-\infty,0): & \mathcal H_{\lambda} = \left\lbrace \phi: (0,+\infty) \rightarrow \mathbb R \textrm{ such that }\sup_{x>0} x^{-\lambda}|\phi(x)|<+\infty \right\rbrace, \\[3mm]
if\,\,\,\lambda \in(0,1]: & \mathcal H_{\lambda} = \left\lbrace \phi: (0,+\infty) \rightarrow \mathbb R \textrm{ such that }\sup_{x>0} (1 + x)^{-\lambda}|\phi(x)|<+\infty \right\rbrace,\\[3mm]
if\,\,\,\lambda \in(0,1]: & \mathcal H^e_{\lambda} = \left\lbrace \phi: (0,+\infty) \rightarrow \mathbb R \textrm{ such that }\sup_{x>0} x^{-\lambda}|\phi(x)|<+\infty \right\rbrace.
\end{array}
\]
It is necessary to introduce the space $\mathcal H^e_{\lambda}$ to study the case (\ref{special_case}).
\end{definition}

\subsection{The Smoluchowki coagulation equation}\label{Intro_SmoEq}

The weak formulation of the Smoluchowski coagulation equation is given by
\begin{equation} \label{Intro:SmoEq_Weak}
\dfrac{d}{dt}\langle \mu_t(dx), \phi(x) \rangle = \dfrac{1}{2} \langle \mu_t(dx)  \mu_t(dy), (A\phi)(x,y)K(x,y)\rangle , 
\end{equation}
see Notation \ref{notation}. This is a general formulation and it embraces the two previous equations : if $\mu_0$ is discrete (i.e. $\textrm{supp}(\mu_0)\subset \mathbb N$), then this corresponds to the ``discrete coagulation equation'' (\ref{Intro:SmoEq_discrete}), while when $\mu_0$ is continuous (i.e. $\mu_0(dx) = \mu_0(x)dx$), this corresponds to the ``continuous coagulation equation'' (\ref{Intro:SmoEq}). Formulation (\ref{Intro:SmoEq_Weak}) is standard, see \cite{Norris}.\medskip

\begin{definition}\label{Def_SmoEq} Let $\lambda\in(-\infty,1]\setminus \{0\}$, a coagulation kernel $K$ satisfying either (\ref{case1}), (\ref{case2}) or (\ref{special_case}), and $\mu^{in} \in \mathcal M^{+}_{\lambda}$. We will then say that $(\mu_t)_{t\geq 0} \subset \mathcal M^{+}$ is a $(\mu^{in},K,\lambda)$-weak solution to Smoluchowski's equation if the following conditions are verified: 
\begin{enumerate}[(i)]
\item $\mu_0 = \mu^{in}$,
\item the application $t \longmapsto \langle \mu_t(dx), \phi(x) \rangle$
is differentiable on $[0,+\infty)$ and satisfies (\ref{Intro:SmoEq_Weak}) for each $\phi \in \mathcal H_{\lambda}$ (cases (\ref{case1}) and (\ref{case2})) or for each $\phi \in \mathcal H^e_{\lambda}$ (case (\ref{special_case})),
\item for all $T\in[0,+\infty)$ 
\begin{equation}\label{Intro:SupBounded}
\sup_{s\in[0,T]} M_{\alpha}(\mu_s) < +\infty,
\end{equation}
for $\alpha = \lambda$ (cases (\ref{case1}) and (\ref{special_case})) or for $\alpha = 0,\,2\lambda$ (case (\ref{case2})).
\end{enumerate}
\end{definition}
We demand more finite moments of $\mu_0$ than in \cite{Well-Pdnss} to assure the convergence of the Marcus-Lushnikov process. According to the hypothesis on the kernel (\ref{case1}), (\ref{case2}) or (\ref{special_case}) together with (\ref{Intro:SupBounded}) and  Lemma \ref{Intro:lemmaWPdnss}, the integrals in the weak formulation (\ref{Intro:SmoEq_Weak}) are absolutely convergent and bounded with respect to $t\in[0,T]$ for every $T$. \medskip

Under (\ref{case1}) or (\ref{special_case}), the existence and uniqueness of such weak solutions have been established in \cite{Well-Pdnss} for any $\mu^{in}\in \mathcal M^+_{\lambda}$. Under (\ref{case2}), the existence and uniqueness of  weak solutions satisfying (\ref{Intro:SupBounded}) with $\alpha = \lambda$ have also been checked in \cite{Well-Pdnss} for any $\mu^{in}\in \mathcal M^+_{\lambda}$. Using furthermore Proposition \ref{Prop:bounded_moments}, we immediately deduce the existence and uniqueness of weak solutions under (\ref{case2}), in the sense of Definition \ref{Def_SmoEq}, for any $\mu^{in}\in \mathcal M^+_{0} \cap \mathcal M^+_{2\lambda}$.

\subsection{The Marcus-Lushnikov process}\label{Intro_StoMLp}

The Marcus-Lushnikov process describes the stochastic Markov evolution of a finite particle system of coalescing particles. We consider a coagulation kernel $K$  and a finite particle system initially consisting of $N \geq 2$ particles of masses $x_1,  \cdots, \, x_N \in (0,+\infty)$.  We assume that the system evolves according to the following dynamics: each pair of particles (of masses $x$ and $y$) coalesce (i.e. disappears and forms a new particle of mass $x+y$) with a rate proportional to $K(x,y)$. \medskip

Let $n\in\mathbb N$ and we assign to all particles the weight $1/n$. We define now rigorously the Marcus-Lushnikov process to be used.
\begin{definition}
We consider a coagulation kernel $K$, $n\in \mathbb N$ and an initial state $\mu^n_0 = \dfrac{1}{n} \sum_{i=1}^N \delta_{x_i}$, with $x_1,  \cdots, \, x_N \in (0,+\infty)$. \medskip

The Marcus-Lushnikov process $(\mu^n_t)_{t\geq 0}$ associated with $(n,K,\mu^n_0)$ is a Markov $\mathcal M^+$-valued c\`adl\`ag process satisfying:
\begin{enumerate}[(i)]
\item $(\mu^n_t)_{t\geq 0}$ takes its values in $\left\lbrace \dfrac{1}{n} \sum_{i=1}^k \delta_{y_i}; k\leq N,\,y_i>0 \right\rbrace $.
\item Its infinitesimal generator is given, for all mesurable functions $\Psi:\mathcal M^+ \rightarrow \mathbb R$ and all states $ \mu = \dfrac{1}{n} \sum_{i=1}^k \delta_{y_i}$ by
\begin{equation*}
L \Psi(\mu) = \sum_{1\leq i<j\leq k} \left\{\Psi\left[\mu + n^{-1}\left(\delta_{y_i+y_j}-\delta_{y_i} -\delta_{y_j}\right)\right] - \Psi[\mu] \right\} \dfrac{K(y_i,y_j)}{n}.
\end{equation*}
\end{enumerate}
\end{definition}
This process is known to be well-defined and unique, see \cite{Aldous,Norris}. We will use the following classical representation of the Marcus-Lushnikov process (see e.g. \cite{Sto-Coal,Sto-Coal2}): there is a Poisson measure $J(dt,d(i,j),dz)$ on $[0,+\infty) \times \{(i,j)\in\mathbb N^2, i<j\}\times [0,+\infty)$ with intensity measure $dt \left[\sum_{k < l} \delta_{(k,l)}\left(d(i,j)\right)\right] dz$, such that for any measurable function $\phi: (0,+\infty) \rightarrow \mathbb R$
\begin{eqnarray}\label{Dev:Eq-mu_n-1}
\langle\mu^n_t(dx), \phi(x)\rangle & = &\langle\mu^n_{0}(dx), \phi(x)\rangle\, +\,\int_0^t \int_{i<j} \int_0^{+\infty} \dfrac{1}{n}\left[ \phi\left(X^i_{s-}+ X^j_{s-}\right) - \phi\left(X^i_{s-}\right) - \phi\left(X^j_{s-}\right)\right]   \nonumber \\
&  & \hspace{4.5cm}\mathds 1_{\left\lbrace z \leq \frac{K\left(X^i_{s-},X^j_{s-}\right)}{n} \right\rbrace }\,\mathds 1_{\{j\leq N(s-)\}}J(ds,d(i,j),dz),
\end{eqnarray}
where $\mu^n_t = \frac{1}{n}\sum^{N(t)}_{k=1} \delta_{X^k_t}$, $N(t)$ being the (non-increasing) number of particles at time $t$.\medskip

This can be written using the compensated Poisson measure related to $J$:
\begin{eqnarray}\label{Dev:Eq-mu_n-2}
\langle\mu^n_t(dx), \phi(x)\rangle & = &\langle\mu^n_0(dx), \phi(x)\rangle + \, \dfrac{1}{2} \int_0^t \langle \mu^n_s(dx) \mu^n_s (dy), \left(A\phi\right)(x,y) K(x,y)\rangle ds\nonumber\\
& & -\, \dfrac{1}{2n} \int_0^t \langle \mu^n_s(dx), \left(A\phi\right)(x,x) K(x,x) \rangle ds\\
& & +\, \int_0^t \int_{i<j} \int_0^{+\infty} \dfrac{1}{n}\, \left(A\phi\right)\left(X^i_{s-},X^j_{s-}\right)\, \mathds 1_{\left\lbrace z \leq \frac{K\left(X^i_{s-},X^j_{s-}\right)}{n} \right\rbrace }\mathds 1_{\{j\leq N(s-)\}}\nonumber\\
& & \hspace{8cm}\tilde{J}(ds,d(i,j),dz), \nonumber
\end{eqnarray}
where the operator $A$ is defined in (\ref{Intro:Op-A}). The third term on the right-hand side is issued from the impossibility of coalescence of a particle with itself. 

\section{Results}\label{Results}
\setcounter{equation}{0}

We state in this section our main result. We also state as a proposition the construction of a sequence of initial conditions for the Marcus-Lushnikov processes and finally comment on our results.

\begin{theorem}\label{theorem}
We consider $\lambda \in (-\infty,1]\setminus\{0\}$ and a coagulation kernel $K$ satisfying either (\ref{case1}), (\ref{case2}) or (\ref{special_case}). Let $\mu_0 \in \mathcal M^{+}$ and $(\mu_t)_{t\geq 0}$ the $(\mu_{0},K,\lambda)$-weak solution to Smoluchowski's equation. Let $\mu^n_0$ be deterministic and of the form $\frac{1}{n}\sum_{i=1}^N \delta_{x_i}$ and denote by $(\mu^n_t)_{t\geq 0}$ the associated $(n,K,\mu^n_0)$-Marcus-Lushnikov process. Let $\varepsilon>0$.
\begin{itemize}
\item[$\bullet$] Assume (\ref{case1}) or (\ref{special_case}) and that $\mu_0$ belongs to $\mathcal M^+_{\lambda} \cap \mathcal M^+_{2\lambda + \tilde{\varepsilon}}$, where $\tilde{\varepsilon} = sgn(\lambda)\times \varepsilon$. Then for any $T>0$,
\begin{eqnarray*}
\hspace{5mm}\mathbb E\left[\sup_{t\in[0,T]} d_{\lambda}(\mu^n_t,\mu_t)  \right]  &\leq & \left[d_{\lambda}(\mu^n_0,\mu_0) + \frac{(1+T)C_{\lambda,\varepsilon}}{\sqrt{n}} \bigg(M_{\lambda}(\mu^n_0) + M_{2\lambda+\tilde{\varepsilon}}(\mu^n_0)\bigg)\right] \\
& &\times \exp\left[T C_{\lambda,\varepsilon}  M_{\lambda}(\mu^n_0+\mu_0) \right],\nonumber
\end{eqnarray*}
where $C_{\lambda,\varepsilon}$ is a positive constant depending only on $\lambda$, $\varepsilon$ and $\kappa_0$, and $\kappa_1$.
\item[$\bullet$] Assume (\ref{case2}) and that $\mu_0\in\mathcal M^+_{0} \cap \mathcal M^+_{\gamma+\varepsilon}$ where $\gamma = \max\{2\lambda, 4\lambda-1\}$. Then for any $T>0$,
\begin{eqnarray*}
\hspace{5mm}\sup_{t\in[0,T]}\mathbb E\left[ d_{\lambda}(\mu^n_t,\mu_t)  \right]  &\leq & \Bigg[d_{\lambda}(\mu^n_0,\mu_0) +  \frac{(1+T)C_{\lambda,\varepsilon}}{\sqrt{n}} \bigg( 1 + \left[M_{0}(\mu^n_0 + \mu_0)\right]^2  \\
& &  + \left[M_{\gamma+\varepsilon}(\mu^n_0 + \mu_0)\right]^2 \bigg)\Bigg]\times \exp\left[T C_{\lambda,\varepsilon} M_{\lambda}(\mu^n_0+\mu_0) \right],\nonumber
\end{eqnarray*}
where $C_{\lambda,\varepsilon}$ is a positive constant depending only on $\lambda$, $\varepsilon$, $\kappa_0$ and $\kappa_1$.
\end{itemize}
\end{theorem}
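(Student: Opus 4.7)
The starting point is a forward SDE for the signed process
$E_t(x):=G^{\mu^n_t}(x)-G^{\mu_t}(x)$ under (\ref{case1}), and $E_t(x):=F^{\mu^n_t}(x)-F^{\mu_t}(x)$ under (\ref{case2}) or (\ref{special_case}).
One plugs the test function $\phi_x(y)=\mathds 1_{(0,x]}(y)$ (respectively $\mathds 1_{(x,+\infty)}(y)$) into the weak equation (\ref{Intro:SmoEq_Weak}) for $\mu_t$ and into the jump representation (\ref{Dev:Eq-mu_n-2}) for $\mu^n_t$: although $\phi_x\notin\mathcal H_\lambda$, it can be approximated by elements of $\mathcal H_\lambda$ or $\mathcal H^e_\lambda$ and the limit is taken using (\ref{Intro:SupBounded}) and the kernel bounds. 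Subtracting yields
\begin{equation*}
E_t(x)=E_0(x)+\int_0^t D_s(x)\,ds+M^n_t(x),
\end{equation*}
with $D_s(x)$ a bilinear functional of $\mu^n_s,\mu_s$ against $K\cdot A\phi_x$, and $M^n_t(x)$ a compensated Poisson martingale whose jumps are of size $1/n$. The goal is a Gronwall inequality
\begin{equation*}
\mathbb E\!\left[d_\lambda(\mu^n_t,\mu_t)\right]\le d_\lambda(\mu^n_0,\mu_0)+C_{\lambda,\varepsilon}\!\int_0^t M_\lambda(\mu^n_s+\mu_s)\,\mathbb E[d_\lambda(\mu^n_s,\mu_s)]\,ds+\frac{C_{\lambda,\varepsilon}(1+t)}{\sqrt n}\,\Theta,
\end{equation*}
where $\Theta$ is the moment quantity appearing in the statement; propagation of moments in $s$ is then provided by Proposition \ref{Prop:bounded_moments}.

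\textbf{The negative case $\lambda<0$ (condition (\ref{case1})).} Following \cite{Well-Pdnss}, the pointwise decomposition of $A\mathds 1_{(0,x]}(y,z)$ as a non-positive combination of $x$-indicators produces a dissipative structure in the drift. I would apply It\^o's formula to the smooth approximation $h_\delta(u)=\sqrt{u^2+\delta^2}-\delta$ of $|u|$ at $E_t(x)$, integrate against $x^{\lambda-1}dx$, send $\delta\downarrow 0$ and use Fubini. The drift splits into a \emph{stability} part bounded by $C_\lambda M_\lambda(\mu^n_s+\mu_s)\,d_\lambda(\mu^n_s,\mu_s)$ after using the hypothesis on $\partial_x K$, and a \emph{self-interaction} correction of order $M_{2\lambda+\tilde\varepsilon}(\mu^n_0)/n$ coming from the $\tfrac{1}{2n}$ term in (\ref{Dev:Eq-mu_n-2}). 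The martingale is handled by Doob and Burkholder--Davis--Gundy: its predictable quadratic variation at $x$ is of size $K(x,y)/n$ integrated against $\mu^n_s(dy)$, so that integration against $x^{\lambda-1}dx$ gives the $1/\sqrt n$ factor times the moments of orders $\lambda$ and $2\lambda+\tilde\varepsilon$. The case (\ref{special_case}) is entirely parallel after switching to $F^\mu$: the enhanced bound $K\le\kappa_0(x\wedge y)^\lambda$ makes the same arguments work despite the absence of the dissipative sign.

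\textbf{The positive case (\ref{case2}).} Now $A\mathds 1_{(x,+\infty)}(y,z)$ changes sign (it is negative on $\{x<y\wedge z\}$ and positive on $\{y\vee z\le x<y+z\}$), so the cancellation above is lost; as the authors announce, this is where ``the proof is much more difficult''. I would still apply It\^o to $h_\delta(E_t(x))$, but would now use the \emph{second-order} Taylor remainder of $h_\delta$ on each jump of $M^n_t(x)$: the resulting compensator contributes an additional deterministic term of order $1/n$ that must be absorbed. The bilinear drift $D_s(x)$ is rewritten via the above pointwise decomposition of $A\phi_x$, and Fubini exchanges the $x$-integration with the two measure integrations, putting one in a position to use the assumption on $|\partial_x K|$ of (\ref{case2}); this yields the linear Gronwall term $C M_\lambda(\mu^n_s+\mu_s)\mathbb E[d_\lambda]$ plus residuals of order $(1+[M_0(\mu^n_s+\mu_s)]^2+[M_{\gamma+\varepsilon}(\mu^n_s+\mu_s)]^2)/\sqrt n$. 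The exponent $\gamma=\max\{2\lambda,4\lambda-1\}$ arises from balancing the $h_\delta''$ correction (giving $2\lambda$) against the BDG square of the martingale (giving $4\lambda-1$). Since the $x$-weighted BDG bound is no longer uniformly integrable in $x$, only $\sup_t\mathbb E[d_\lambda]$, rather than $\mathbb E[\sup_t d_\lambda]$, is controlled, which matches the weaker form of the second estimate in the statement.

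\textbf{Main obstacle.} The hardest step is the positive-$\lambda$ analysis: carrying out the It\^o/Taylor argument on $h_\delta\circ E_t(x)$ while keeping all error terms integrable in $x$ against $x^{\lambda-1}dx$ and controllable uniformly in the smoothing parameter $\delta$, in a situation where the drift has no sign and one only has the one-sided bound on $|\partial_x K|$. The right Fubini rearrangement and the correct identification of the moment $\gamma+\varepsilon$ needed to close Gronwall are the two technical linchpins.
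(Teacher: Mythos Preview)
Your outline captures the overall architecture correctly (SDE for $E_t(x)$, Fubini/integration-by-parts using the $\partial_xK$ hypothesis, Gronwall, martingale controlled by $1/\sqrt n$), but there is a genuine gap in the positive case (\ref{case2}), and your treatment of the negative and special cases is more complicated than what the paper actually does.

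\textbf{Negative case (\ref{case1}) and special case (\ref{special_case}).} The paper does \emph{not} apply It\^o's formula here. It simply writes $E_n(t,x)=E_n(0,x)+\int_0^t D_s(x)\,ds+M^n_t(x)$, takes absolute values (triangle inequality), and integrates against $x^{\lambda-1}dx$. The drift $A_1$ (or $C_1,C_2$) is controlled by the Fubini manoeuvre of Lemma~\ref{Dev:lemmaforfubini}, and the martingale by Cauchy--Schwarz and Doob on $\sup_t|M^n_t(x)|$. No smoothing is needed, so there is no $\delta\to 0$ limit to worry about. (Incidentally, $\phi_x\in\mathcal H_\lambda$ in all three cases; the paper verifies this explicitly, so no approximation of the test function is required either.)

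\textbf{Positive case (\ref{case2}): the real gap.} Here the It\^o approach \emph{is} essential, because the crude triangle inequality on the drift loses the cancellation $(z+y)^{\lambda-1}-z^{\lambda-1}\le 0$ that makes $B_1$ harmless. But your plan of using a fixed $h_\delta$ and sending $\delta\downarrow 0$ fails: the second-order jump correction in It\^o's formula is bounded by $\frac{C}{\delta}\sum_s(\Delta E_s)^2$, whose expectation is of order $\frac{t}{n\delta}$, and this blows up as $\delta\to 0$. The paper's key idea is to \emph{not} send the smoothing to zero, but to pick it $x$- and $n$-dependent:
\[
\theta^n_{(x)}=\frac{1}{\sqrt n}\mathds 1_{(0,1]}(x)+\frac{x^{-2\lambda-\varepsilon}}{\sqrt n}\mathds 1_{(1,\infty)}(x).
\]
With this choice, (i) the approximation error $\int_0^\infty x^{\lambda-1}\theta^n_{(x)}dx\le C/\sqrt n$ (Lemma~\ref{lemma_integrals}), so $\varphi_{\theta^n_{(x)}}(E_n)$ still controls $d_\lambda$ up to $O(1/\sqrt n)$; (ii) the second-order It\^o term $B_4$ carries a factor $1/\theta^n_{(x)}\sim\sqrt n\cdot x^{2\lambda+\varepsilon}$, and after integration in $x$ this produces the $1/\sqrt n$ rate together with the moment $M_{\gamma+\varepsilon}$; and (iii) in $B_1$, whenever $|E_n(s,z)|\ge\theta^n_{(z)}$ one has $|\varphi'_{\theta}|=1$ and the dangerous drift piece is $\le 0$, while on $\{|E_n|<\theta^n_{(z)}\}$ the bound $|E_n|\le\theta^n_{(z)}$ yields another $1/\sqrt n$ term. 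This balance between approximation error and second-order blow-up is the linchpin you are missing.

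Two further corrections. First, in the positive case the compensated-Poisson martingale $M(t,x)$ is simply dropped after taking expectation; there is no BDG step, which is precisely why only $\sup_t\mathbb E[d_\lambda]$ (not $\mathbb E[\sup_t d_\lambda]$) is obtained. Second, the dichotomy $\gamma=2\lambda$ versus $4\lambda-1$ does not come from balancing two different mechanisms; both arise from the same $B_4$ integral $\int x^{\lambda-1}/\theta^n_{(x)}\,\mathds 1_{(\cdot)}dx$ of Lemma~\ref{lemma_integrals}(iii), the split being according to whether $\lambda\in(0,1/2)$ or $\lambda\in[1/2,1]$.
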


Now we present the proposition giving a $d_{\lambda}$-approximation of the initial condition.

\begin{proposition}\label{Results:Prop}
Let $\lambda \in (-\infty,1] \setminus \{0\}$, $n\in \mathbb N$ and $\mu_0$ a non negative Radon measure on $(0,+\infty)$ such that  $\mu_0\in \mathcal M^+_{\lambda} \cap \mathcal M^+_{2\lambda}$. The measure $\mu_0$ is supposed to be either atomless or discrete ($\rm{supp}(\mu_0)\subset \mathbb N$). 
Then, there exists a positive measure $\mu^n_0$ of the form $\frac{1}{n}\sum_{i=1}^{N_n} \delta_{x_i}$ such that:
\begin{equation*}
d_ {\lambda}(\mu^n_0,\mu_0) \leq \frac{C_{\lambda}}{\sqrt{n}},
\end{equation*}
where the constant $C_{\lambda}$ depends only on $\lambda$ and  $M_{2\lambda}(\mu_0)$. We also have
\begin{equation*}
M_{\alpha}(\mu^n_0) \leq M_{\alpha}(\mu_0),
\end{equation*}
for all $\alpha\leq0$ if $\lambda \in (-\infty,0)$ and for all $\alpha\geq 0$ if $\lambda \in (0,1]$. Furthermore, if $M_0(\mu_0)< +\infty$, then 
\begin{equation*}
N_n \leq n\, M_0(\mu_0).
\end{equation*}
\end{proposition}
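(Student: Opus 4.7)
The construction I propose is based on quantiles of the relevant monotone function attached to $\mu_0$. For $\lambda\in(0,1]$, pick $x_k^n$ to satisfy $F^{\mu_0}(x_k^n)=k/n$ for $k=1,\ldots,N_n$, with $N_n:=\lfloor nM_0(\mu_0)\rfloor$ when $M_0(\mu_0)<+\infty$; the case $\lambda\in(-\infty,0)$ is treated identically with $G^{\mu_0}$ in place of $F^{\mu_0}$. When $\mu_0$ is atomless these quantiles exist by continuity of $F^{\mu_0}$ (resp.\ $G^{\mu_0}$); when $\mu_0=\sum_{k\in\mathbb{N}}a_k\delta_k$ is discrete, the equivalent construction is $\mu_0^n=\frac{1}{n}\sum_k\lfloor na_k\rfloor\delta_k$. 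In either case $\mu_0^n=\frac{1}{n}\sum_{k=1}^{N_n}\delta_{x_k^n}$, and the inequality $N_n\leq nM_0(\mu_0)$ is automatic.

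The moment bound would be the first point to verify. In the discrete case one has $\mu_0^n\leq\mu_0$ as measures, so $M_\alpha(\mu_0^n)\leq M_\alpha(\mu_0)$ holds for every $\alpha\in\mathbb{R}$. In the atomless case with $\lambda\in(0,1]$ and $\alpha\geq 0$, with the convention $x_0^n=+\infty$,
\begin{equation*}
M_\alpha(\mu_0)\geq\sum_{k=0}^{N_n-1}\int_{(x_{k+1}^n,x_k^n]}y^\alpha\mu_0(dy)\geq\sum_{k=0}^{N_n-1}\frac{(x_{k+1}^n)^\alpha}{n}=M_\alpha(\mu_0^n),
\end{equation*}
using that each interval has $\mu_0$-mass exactly $1/n$ and that $y\mapsto y^\alpha$ is non-decreasing. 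The symmetric case $\lambda<0$, $\alpha\leq 0$ is identical after reversing the ordering.

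The heart of the proof is the bound on $d_\lambda$; I describe it for $\lambda\in(0,1]$, the other case being symmetric via $G^\mu$. Two competing estimates enter: by construction one has $|F^{\mu_0^n}(x)-F^{\mu_0}(x)|\leq 1/n$ uniformly in $x>0$, while Markov's inequality combined with the moment bound yields $F^{\mu_0^n}(x)\vee F^{\mu_0}(x)\leq M_{2\lambda}(\mu_0)/x^{2\lambda}$. These two bounds agree at the threshold $x_\star:=(nM_{2\lambda}(\mu_0))^{1/(2\lambda)}$, and splitting the integral defining $d_\lambda$ there yields
\begin{equation*}
d_\lambda(\mu_0^n,\mu_0)\leq\int_0^{x_\star}\frac{x^{\lambda-1}}{n}\,dx+\int_{x_\star}^{+\infty}\frac{M_{2\lambda}(\mu_0)\,x^{\lambda-1}}{x^{2\lambda}}\,dx=\frac{x_\star^\lambda}{n\lambda}+\frac{M_{2\lambda}(\mu_0)}{\lambda\,x_\star^\lambda}=\frac{2}{\lambda}\sqrt{\frac{M_{2\lambda}(\mu_0)}{n}}.
\end{equation*}

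The main obstacle, as I see it, is that the $1/\sqrt{n}$ rate arises precisely from the optimization between the two bounds at $x_\star$, which is why the hypothesis $\mu_0\in\mathcal{M}^+_{2\lambda}$ (strictly stronger than the $\mu_0\in\mathcal{M}^+_\lambda$ needed for $d_\lambda$ to be defined) cannot be weakened: without it, the Markov tail estimate collapses. A secondary subtlety is the case $M_0(\mu_0)=+\infty$, in which $N_n$ must be chosen adaptively (possibly larger than $n$, hence outside the explicit bound stated in the proposition) so as to control the remainder $\int_0^{x_{N_n}^n}x^{\lambda-1}(F^{\mu_0}(x)-N_n/n)\,dx$; this is handled by Fubini together with the same Markov tail, and the same philosophy and rate prevail.
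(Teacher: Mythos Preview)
Your atomless construction with $M_0(\mu_0)<\infty$ is correct and, up to presentation, coincides with the paper's: the paper restricts to $[a_n,A_n]$ with $A_n=n^{1/(2\lambda)}$ (your $x_\star$, modulo the constant $M_{2\lambda}$) and then uses the same two competing estimates---the uniform $1/n$ bound on the compact part and the $M_{2\lambda}$-Markov tail. Your single splitting at $x_\star$ is arguably the cleaner way to write this.

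There are, however, two genuine gaps.

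\textbf{The discrete construction is wrong.} With $\mu_0^n=\frac1n\sum_k\lfloor na_k\rfloor\delta_k$ the claimed uniform bound $|F^{\mu_0^n}(x)-F^{\mu_0}(x)|\le 1/n$ fails: the left-hand side equals $\sum_{k>x}\bigl(a_k-\lfloor na_k\rfloor/n\bigr)$, a sum of non-negative terms each at most $1/n$, and these rounding errors \emph{accumulate}. For instance $a_k=1/2$ for $k=1,\dots,2m$ and $n=3$ gives $F^{\mu_0}(0+)-F^{\mu_0^n}(0+)=m/3$. The paper instead rounds the \emph{cumulative} tails, setting $\alpha_k^n=\lfloor n\sum_{i\ge k}a_i\rfloor-\lfloor n\sum_{i\ge k+1}a_i\rfloor$; by telescoping this keeps the tail difference below $2/n$ uniformly. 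With the corrected weights the pointwise inequality $\mu_0^n\le\mu_0$ is lost, so your one-line moment argument no longer applies and a separate Abel-summation computation is needed.

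\textbf{The case $M_0(\mu_0)=+\infty$ is not a secondary subtlety.} The hypothesis $\mu_0\in\mathcal M^+_\lambda\cap\mathcal M^+_{2\lambda}$ does not prevent $M_0(\mu_0)=+\infty$: for $\lambda>0$ the mass near $0$ is uncontrolled, for $\lambda<0$ the mass near $\infty$. Your uniform $1/n$ bound breaks down exactly on that side, and your Markov estimate via $M_{2\lambda}$ controls only the \emph{opposite} side---so the claim that ``the same Markov tail'' handles the remainder is incorrect. The paper resolves this with an additional truncation governed by $M_\lambda$ rather than $M_{2\lambda}$: for $\lambda>0$ it picks $a_n$ with $\int_0^{a_n}y^\lambda\mu_0(dy)\le n^{-1/2}$, and the contribution of $(0,a_n)$ to $d_\lambda$ is then $\lambda^{-1}\int_0^{a_n}y^\lambda\mu_0(dy)\le(\lambda\sqrt n)^{-1}$ by Fubini. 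A different moment enters, and the argument is not symmetric with the large-$x$ tail.
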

The estimate of the parameter $N_n$ (initial number of particles) may be useful to study the numerical cost of the simulation.\medskip

Gathering Theorem \ref{theorem} and Proposition \ref{Results:Prop}, we deduce the following statement.
\begin{corollary}\label{corollary}
We consider $\lambda \in (-\infty,1]\setminus\{0\}$, $\varepsilon>0$ and a coagulation kernel $K$ satisfying either (\ref{case1}), (\ref{case2}) or (\ref{special_case}). Let $\mu_0 \in \mathcal M^{+}$  be either atomless or discrete ($\textrm{supp}(\mu_0)\subset \mathbb N$), and $(\mu_t)_{t\in[0,+\infty)}$ the $(\mu_{0},K,\lambda)$-weak solution to Smoluchowski's equation. Then it is possible to build a family of initial conditions $\mu_0^n=\frac 1 n \sum_{k=1}^{N_n} \delta_{x_i}$ such that, for $(\mu^n_t)_{t\geq 0}$ the corresponding $(n,K,\mu^n_0)$-Marcus-Lushnikov process,
\begin{itemize}
\item[$\bullet$] under (\ref{case1}) or (\ref{special_case}), if $\mu_0$ belongs to $\mathcal M^+_{\lambda} \cap \mathcal M^+_{2\lambda + \tilde{\varepsilon}}$, where $\tilde{\varepsilon} = sgn(\lambda)\times \varepsilon$, then for any $T>0$,
\begin{eqnarray}
\hspace{5mm}\mathbb E\left[\sup_{t\in[0,T]} d_{\lambda}(\mu^n_t,\mu_t)  \right]  &\leq & \frac{C_{T}}{\sqrt{n}}, \nonumber
\end{eqnarray}
where $C_{T}$ is a positive constant depending only on $T$, $\lambda$, $\varepsilon$, $\kappa_0$, $\kappa_1$ and $\mu_0$;
\item[$\bullet$] under (\ref{case2}), if $\mu_0\in\mathcal M^+_{0} \cap \mathcal M^+_{\gamma+\varepsilon}$ where $\gamma = \max\{2\lambda, 4\lambda-1\}$, then for any $T>0$,
\begin{eqnarray}
\hspace{5mm}\sup_{t\in[0,T]}\mathbb E\left[ d_{\lambda}(\mu^n_t,\mu_t)  \right]  &\leq & \frac{C_{T}}{\sqrt{n}}, \nonumber
\end{eqnarray}
where $C_{T}$ is a positive constant depending only on $T$, $\lambda$, $\varepsilon$, $\kappa_0$, $\kappa_1$ and $\mu_0$. 
\end{itemize}
\end{corollary}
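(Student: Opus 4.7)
The plan is entirely straightforward: Corollary \ref{corollary} is a direct combination of Theorem \ref{theorem} and Proposition \ref{Results:Prop}. The strategy is to use Proposition \ref{Results:Prop} to produce an initial condition $\mu_0^n=\frac{1}{n}\sum_{i=1}^{N_n}\delta_{x_i}$ such that $d_\lambda(\mu_0^n,\mu_0)\le C_\lambda/\sqrt n$ and such that the moments $M_\alpha(\mu_0^n)$ appearing in Theorem \ref{theorem} are dominated, uniformly in $n$, by the corresponding $M_\alpha(\mu_0)$. Plugging these bounds into the estimates of Theorem \ref{theorem} immediately yields a $C_T/\sqrt{n}$ bound in which $C_T$ only depends on $T$, $\lambda$, $\varepsilon$, $\kappa_0$, $\kappa_1$ and $\mu_0$.

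Before doing this, I first need to verify that Proposition \ref{Results:Prop} is applicable, i.e.\ that $\mu_0\in\mathcal M^+_\lambda\cap\mathcal M^+_{2\lambda}$ under the Corollary's hypotheses. In the first bullet, splitting the integral defining $M_{2\lambda}(\mu_0)$ at $x=1$ and using $x^{2\lambda}\le x^\lambda$ on one side and $x^{2\lambda}\le x^{2\lambda+\tilde\varepsilon}$ on the other (checking both signs of $\lambda$) gives $M_{2\lambda}(\mu_0)\le M_\lambda(\mu_0)+M_{2\lambda+\tilde\varepsilon}(\mu_0)<+\infty$. In the second bullet, $\lambda>0$ and $\gamma\ge 2\lambda$, so the same splitting yields $M_\lambda(\mu_0)\le M_0(\mu_0)+M_{\gamma+\varepsilon}(\mu_0)$ and $M_{2\lambda}(\mu_0)\le M_0(\mu_0)+M_{\gamma+\varepsilon}(\mu_0)$. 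Since $\mu_0$ is either atomless or supported in $\mathbb N$, Proposition \ref{Results:Prop} applies and produces the announced $\mu_0^n$.

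It then suffices to substitute the bounds of Proposition \ref{Results:Prop} into those of Theorem \ref{theorem}. Under (\ref{case1}) or (\ref{special_case}), the Proposition gives $M_\lambda(\mu_0^n)\le M_\lambda(\mu_0)$ and $M_{2\lambda+\tilde\varepsilon}(\mu_0^n)\le M_{2\lambda+\tilde\varepsilon}(\mu_0)$ (for $\lambda<0$ both exponents are $\le 0$, for $\lambda>0$ both are $\ge 0$, so the appropriate case of Proposition \ref{Results:Prop} applies); combined with $d_\lambda(\mu_0^n,\mu_0)\le C_\lambda/\sqrt n$, the bracket in Theorem \ref{theorem} is bounded by $C/\sqrt n$, and the exponential factor $\exp[TC_{\lambda,\varepsilon}M_\lambda(\mu_0^n+\mu_0)]$ is bounded by $\exp[2TC_{\lambda,\varepsilon}M_\lambda(\mu_0)]$, which is $n$-independent. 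Under (\ref{case2}), the same scheme applies using $M_0(\mu_0^n)\le M_0(\mu_0)$, $M_{\gamma+\varepsilon}(\mu_0^n)\le M_{\gamma+\varepsilon}(\mu_0)$, and the interpolation $M_\lambda(\mu_0^n)\le M_0(\mu_0)+M_{\gamma+\varepsilon}(\mu_0)$. Collecting all these constants into a single $C_T$ depending only on the listed parameters yields the announced estimates.

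There is no real obstacle here: the proof is a bookkeeping exercise gathering two already-established results. The only mildly delicate point is the interpolation at the beginning, used to derive $\mu_0\in\mathcal M^+_{2\lambda}$ from the Corollary's moment hypotheses so that Proposition \ref{Results:Prop} is indeed available.
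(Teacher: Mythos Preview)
Your proposal is correct and follows exactly the approach the paper intends: the paper itself does not give a detailed proof of Corollary~\ref{corollary}, merely stating ``Gathering Theorem~\ref{theorem} and Proposition~\ref{Results:Prop}, we deduce the following statement.'' Your write-up actually supplies more detail than the paper does, in particular the moment interpolation needed to verify that $\mu_0\in\mathcal M^+_\lambda\cap\mathcal M^+_{2\lambda}$ so that Proposition~\ref{Results:Prop} applies, and the check that the relevant exponents have the correct sign so that the moment domination $M_\alpha(\mu_0^n)\le M_\alpha(\mu_0)$ from Proposition~\ref{Results:Prop} covers all the moments appearing in Theorem~\ref{theorem}.
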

This last statement is quite satisfying since it provides a rate of convergence in $\frac{1}{\sqrt{n}}$ and it applies to a large class of homogeneous kernels presenting singularities for small or large masses. We probably require more finite moments than really needed but this does not seem to be a real problem for applications.\medskip

We have followed the ideas found in \cite{Well-Pdnss} to prove the case (\ref{case1}) and the special case (\ref{special_case}) of Theorem \ref{theorem}. The case (\ref{case2}) is much more subtle and difficult. For this case we have applied the It\^o formula and manipulated very carefully each term.  By the moment it is not possible to put the ``$\sup$" into the expectation since it is very important to use the sign of the terms and to take advantage of some cancelations.\medskip

Proposition \ref{Results:Prop} presents the proof of the existence of a $d_{\lambda}$-approximation of a general non-negative measure $\mu_0$ (we consider measures $\mu_0$ which are interesting for the Smoluchowski's equation) by a discrete measure $\mu^n_0$ (a finite sum of Dirac's deltas) as a construction procedure. This construction is very useful from a numerical point of view since it gives a measure that will be set as the initial state for the Marcus-Lushnikov process. 

\section{Negative Case}\label{Negative_Case}
\setcounter{equation}{0}
In the whole section, we assume that $K$ satisfies (\ref{case1}) for some fixed $\lambda\in(-\infty,0)$. We fix $\varepsilon > 0$, and we assume that $\mu_0 \in \mathcal M^+_{\lambda} \cap \mathcal M^+_{2\lambda-\varepsilon}$. We denote by $(\mu_t)_{t\geq 0}$ the unique $(\mu_0,K,\lambda)$-weak solution to the Smoluchowski equation. We also consider the $(n,K,\mu_0^n)$-Marcus Lushnikov process, for some given initial condition $\mu_0^n=\frac{1}{n} \sum_{i=1}^N \delta_{x_i}$.\medskip

We introduce, for $t \geq 0$, the quantity $E_n(t,x) = G^{\mu^n_t}(x) - G^{\mu_t}(x)$ as defined in (\ref{Intro:MetricEq1}). We take the test function $\phi(v) = \mathds 1_{(0,x]}(v)$. Since $\sup_{v>0}v^{-\lambda} |\phi(v)|= x^{-\lambda}<+\infty$, we deduce that $\phi \in \mathcal H_{\lambda}$. Computing the difference between equations (\ref{Dev:Eq-mu_n-2}) and (\ref{Intro:SmoEq_Weak}), we get
\begin{eqnarray}\label{Dev:Etx}
E_n(t,x) & = & E_n(0,x) + \dfrac{1}{2} \int_0^t \left\langle \mu^n_s(dv)\mu^n_s(dy) - \mu_s(dv)\mu_s(dy), \left(A\mathds 1_{(0,x]}\right)(v,y)K(v,y) \right\rangle ds \nonumber\\
& & -\, \dfrac{1}{2n} \int_0^t \langle \mu^n_s(dv), \left(A\mathds 1_{(0,x]}\right)(v,v) K(v,v) \rangle ds \\
& & +\, \int_0^t \int_{i<j} \int_0^{+\infty} \dfrac{1}{n}\left(A\mathds 1_{(0,x]}\right)\left(X^i_{s-},X^j_{s-}\right) \mathds 1_{\left\lbrace z\leq \frac{K\left(X^i_{s-},X^j_{s-}\right)}{n} \right\rbrace } \mathds 1_{\{j\leq N(s-)\}}\nonumber\\
& & \hspace{8cm} \tilde{J}(ds,d(i,j),dz). \nonumber
\end{eqnarray}
We take the absolute value and integrate against $x^{\lambda-1}dx$ on $(0,+\infty)$:
 \begin{eqnarray}\label{Dev:BoundGral}
d_{\lambda}(\mu^n_t,\mu_t) & \leq & d_{\lambda}(\mu^n_0,\mu_0) + A_1(t) + A_2(t) + A_3(t),
\end{eqnarray}
where
\begin{eqnarray}
A_1(t) & = & \frac{1}{2} \int_0^{+\infty} x^{\lambda-1} \left|   \int_0^t \langle \mu^n_s(dv)\mu^n_s(dy) - \mu_s(dv)\mu_s(dy), \left(A\mathds 1_{(0,x]}\right)(v,y)K(v,y) \rangle  ds\right| dx, \nonumber\\
A_2(t)& = & \dfrac{1}{2n} \int_0^{+\infty} x^{\lambda-1} \left| \int_0^t  \langle \mu^n_s(dv), \left(A\mathds 1_{(0,x]}\right)(v,v) K(v,v) \rangle \, ds\right|dx, \nonumber\\
A_3(t) & = & \int_0^{+\infty} x^{\lambda-1}  \bigg| \frac{1}{n} \int_0^t \int_{i<j} \int_0^{+\infty} \left(A\mathds 1_{(0,x]}\right)\left(X^i_{s-},X^j_{s-}\right)\mathds 1_{\left\lbrace z\leq \frac{K\left(X^i_{s-},X^j_{s-}\right)}{n} \right\rbrace }\nonumber\\
& & \hspace{6.5cm}\mathds 1_{\{j \leq N(s-)\}}\tilde{J}(ds,d(i,j),dz)\bigg| \, dx. \nonumber
\end{eqnarray}
Now we are going to search for a good upper bound for each term.\medskip

\noindent
\underline{\textbf{Term $A_1(t)$.}}\medskip

Similarly to \cite[Lemma 3.5]{Well-Pdnss}. However, in this case we have to argue a little more, since $t\mapsto G^{\mu_t^n}(x)$ is not (even weakly) differentiable due to the jumps of $\mu_t^n$.\medskip

The term $A_1(t)$, according to the symmetry of the kernel, can be written as:
\begin{eqnarray}\label{Dev:Bound1}
A_1(t) = \frac{1}{2}  \int_0^{+\infty}  x^{\lambda-1}\bigg| \int_0^t   \int_0^{+\infty} \int_0^{+\infty} K(v,y) \left[ \mathds 1_{(0,x]}(v+y)- \mathds 1_{(0,x]}(v) -  \mathds 1_{(0,x]}(y) \right] \nonumber \\
\hspace{8cm} \left( \mu^n_s - \mu_s \right)(dv)\left( \mu^n_s + \mu_s \right)(dy) ds\bigg| \,dx. 
\end{eqnarray}
We use the Fubini theorem and Lemma \ref{Dev:lemmaforfubini}:
$$\int_0^t   \int_0^{+\infty} \int_0^{+\infty} K(v,y) \left[ \mathds 1_{(0,x]}(v+y)- \mathds 1_{(0,x]}(v) -  \mathds 1_{(0,x]}(y) \right] \left( \mu^n_s - \mu_s \right)(dv)\left( \mu^n_s + \mu_s \right)(dy) ds $$
 \begin{eqnarray*}
&&= \, \int_0^t   \int_0^{+\infty} \int_0^{+\infty} \bigg\{K(x-y,y)\mathds 1_{(0,x]}(v+y) -  K(x,y)\mathds 1_{(0,x]}(v)\\ 
&&\hspace{0.8cm} -\,\int_v^{+\infty}\partial_x K(z,y)\left[ \mathds 1_{(0,x]}(z+y)- \mathds 1_{(0,x]}(z) -  \mathds 1_{(0,x]}(y) \right] dz\bigg\}  \left( \mu^n_s - \mu_s \right)(dv)\nonumber\\
&&\hspace{9.6cm}\left( \mu^n_s + \mu_s \right)(dy) ds 
\end{eqnarray*}
\begin{eqnarray*}
&&\hspace{2mm}=\, \int_0^t   \int_0^{+\infty} K(x-y,y) \left[\mathds 1_{x>y} \int_0^{+\infty} \mathds 1_{(0,x-y]}(v) \left( \mu^n_s - \mu_s \right)(dv)\right]\left( \mu^n_s + \mu_s \right)(dy) ds\\
&&\hspace{0.8cm}-\, \int_0^t  \int_0^{+\infty} K(x,y) \left[ \int_0^{+\infty} \mathds 1_{(0,x]}(v) \left( \mu^n_s - \mu_s \right)(dv)\right]\left( \mu^n_s + \mu_s \right)(dy) ds \\
&&\hspace{0.8cm}-\, \int_0^t  \int_0^{+\infty} \int_0^{+\infty} \partial_x K(z,y)\left[ \mathds 1_{(0,x]}(z+y)- \mathds 1_{(0,x]}(z) -  \mathds 1_{(0,x]}(y) \right] \\
&&\hspace{4cm}\left[\int_0^{+\infty} \mathds 1_{(0,z]}(v) \left( \mu^n_s - \mu_s \right)(dv)\right] dz\left( \mu^n_s + \mu_s \right)(dy) ds
\end{eqnarray*}
\begin{eqnarray*}
&&=\, \int_0^t   \int_0^{+\infty} K(x-y,y) \left[\mathds 1_{x>y}\, E_n(s,x-y)\right]\left( \mu^n_s + \mu_s \right)(dy) ds  \\
&&\hspace{0.8cm}-\, \int_0^t  \int_0^{+\infty} K(x,y) \left[ E_n(s,x)\right]\left( \mu^n_s + \mu_s \right)(dy) ds \\
&&\hspace{0.8cm}-\, \int_0^t  \int_0^{+\infty} \int_0^{+\infty} \partial_x K(z,y)\left[ \mathds 1_{(0,x]}(z+y)- \mathds 1_{(0,x]}(z) -  \mathds 1_{(0,x]}(y) \right] \\
&&\hspace{4cm}\left[E_n(s,z)\right] dz\left( \mu^n_s + \mu_s \right)(dy) ds.
\end{eqnarray*}
According to the bound
\begin{equation}\label{Dev:Bound_Indic} 
\left| \mathds 1_{(0,x]}(z+y)- \mathds 1_{(0,x]}(z) -  \mathds 1_{(0,x]}(y) \right| \leq 2 \,\mathds 1_{(0,x]}(z\wedge y),
\end{equation} 
and using (\ref{case1}), we deduce:
\begin{eqnarray}\label{Dev:UpB2}
A_1(t)&\leq & \frac{\kappa_0}{2}  \int_0^t  \int_0^{+\infty} \int_y^{+\infty}  x^{\lambda-1}  x^{\lambda} |E_n(s,x-y)|\,dx\left( \mu^n_s + \mu_s \right)(dy)\, ds \nonumber\\
&&+\frac{\kappa_0}{2} \int_0^t \int_0^{+\infty}    \int_0^{+\infty}  x^{\lambda-1}  (x+y)^{\lambda} |E_n(s,x)|\,dx  \left( \mu^n_s + \mu_s \right)(dy)\,ds\nonumber\\
&& + \int_0^t  \int_0^{+\infty}   \int_0^{+\infty} |\partial_x K(z,y)||E_n(s,z)| \left[ \int_0^{+\infty} x^{\lambda-1}  \mathds 1_{(0,x]}(z\wedge y)dx  \right]dz \left( \mu^n_s + \mu_s \right)(dy)\, ds.\nonumber
\end{eqnarray}
For the first integral we use the change of variable $x \mapsto w + y$ and $(w+y)^{\lambda-1}\,(w+y)^{\lambda} \leq w^{\lambda-1} y^{\lambda}$. For the second integral $(x+y)^{\lambda} \leq y^{\lambda}$. Finally for the third integral, we observe that $\int_0^{+\infty} x^{\lambda-1} \mathds 1_{(0,x]}(z\wedge y) dx = \frac{(z \wedge y)^{\lambda} }{|\lambda |} \leq \frac{z^{\lambda} + y^{\lambda}}{|\lambda |}$ . Using (\ref{case1}) again, this implies
 \begin{eqnarray}\label{Dev:UpB3}
A_1(t)&\leq & \frac{\kappa_0}{2}  \int_0^t ds\int_0^{+\infty} w^{\lambda-1} |E_n(s,w)|\, dw \int_0^{+\infty}   y^{\lambda}  \left( \mu^n_s + \mu_s \right)(dy)\nonumber\\
&& + \frac{\kappa_0}{2}  \int_0^t ds\int_0^{+\infty} x^{\lambda-1} |E_n(s,x)|\, dx \int_0^{+\infty}   y^{\lambda}  \left( \mu^n_s + \mu_s \right)(dy)\nonumber\\
&& + \frac{\kappa_1}{|\lambda|} \int_0^t ds\int_0^{+\infty} z^{\lambda-1} |E_n(s,z)|\, dz \int_0^{+\infty}   y^{\lambda}  \left( \mu^n_s + \mu_s \right)(dy).\nonumber
\end{eqnarray}
The resulting bound for $A_1(t)$ is:
 \begin{equation}\label{Dev:Bound_A1}
A_1(t) \leq  \left(\kappa_0 + \frac{\kappa_1}{|\lambda|}\right)\int_0^t d_{\lambda}(\mu^n_s,\mu_s)\, M_{\lambda}(\mu^n_s + \mu_s)\, ds.
\end{equation}

\noindent
\underline{\textbf{Term $A_2(t)$.}}\medskip

We use $\left|\left(A\mathds 1_{(0,x]}\right)(v,v)\right| = \left|\mathds 1_{(0,x]}(2v) - 2\, \mathds 1_{(0,x]}(v)\right| = \mathds 1_{\{0 < v \leq \frac{x}{2} \}}  + 2\,\mathds 1_{\{\frac{x}{2} < v \leq x \}} \leq 2\,\mathds 1_{\{v \leq x \}}$. This gives
 \begin{eqnarray}\label{Dev:Bound_A2}
A_2(t) & \leq & \frac{1}{n} \int_0^{+\infty} x^{\lambda-1} \int_0^t \int_0^{+\infty} K(v,v) \mathds 1_{\{v \leq x \}} \mu^n_s(dv)\,ds\, dx\nonumber\\
& \leq & \frac{1}{n} \int_0^{+\infty} \int_0^t \kappa_0(2v)^{\lambda} \frac{v^{\lambda}}{|\lambda |} \mu^n_s(dv)\,ds \nonumber\\
& \leq  & \frac{2^{\lambda}\kappa_0}{n\,|\lambda|} \int_0^t M_{2\lambda}(\mu^n_s)\, ds.
\end{eqnarray}
We used (\ref{case1}).\medskip

\noindent 
\underline{\textbf{Term $A_3(t)$.}}\medskip

We will bound the expectation of this term using its bracket, for this we consider:
 \begin{eqnarray*}
 && \mathbb E \left[  \left( \frac{1}{n} \, \int_0^t \int_{i<j} \int_0^{+\infty} (A\mathds 1_{(0,x]})\left(X^i_{s-},X^j_{s-}\right) \mathds 1_{\left\lbrace z\leq \frac{K\left(X^i_{s-},X^j_{s-}\right)}{n} \right\rbrace }\,\mathds 1_{\{j \leq N(s-)\}}\tilde{J}(ds,d(i,j),dz)\right)^2\right] \\
&& \hspace{1.3cm} = \mathbb E \left[ \int_0^t \frac{1}{n^2} \sum_{i<j\leq N(s)} \frac{K\left(X^i_{s},X^j_{s}\right)}{n}\left[\mathds 1_{(0,x]} \left(X^i_{s}+X^j_{s}\right) - \mathds 1_{(0,x]}\left(X^i_{s}\right) -  \mathds 1_{(0,x]}\left(X^j_{s}\right)\right]^2 ds \right]  \\
&&\hspace{1.3cm} \leq \frac{4}{n}   \mathbb E \left[ \int_0^t \sum_{i<j\leq N(s)} \frac{K\left(X^i_{s},X^j_{s}\right)}{n^2} \mathds 1_{(0,x]}\left(X^i_{s}\wedge X^j_{s}\right)ds \right]  \\ 
&&\hspace{1.3cm} \leq \frac{2}{n} \mathbb E \left[ \int_0^t  \left\langle \mu^n_s(dv) \mu^n_s(dy)\,,\,K(v,y)\,\left[ \mathds 1_{(0,x]}(v) + \mathds 1_{(0,x]}(y)\right]  \right\rangle\, ds \right] \\
&&\hspace{1.3cm} \leq \frac{4\, \kappa_0}{n}  \mathbb E \left[ \int_0^t \left\langle \mu^n_s(dv) \mu^n_s(dy)\,,\,(v+y)^{\lambda}\,\mathds 1_{(0,x]}(v)   \right\rangle \,ds \right]. 
\end{eqnarray*}
We have used (\ref{Dev:Bound_Indic}), a symmetry argument then the bound $\mathds 1_{(0,x]} (v\vee y) \leq \mathds 1_{(0,x]} (v) + \mathds 1_{(0,x]} (y)$ and finally (\ref{case1}). We consider now the submartingale (absolute value of a martingale):
\[ S_t(x) = \left| \frac{1}{n} \int_0^t \int_{i<j} \int_0^{+\infty} \left(A\mathds 1_{(0,x]}\right)\left(X^i_{s-},X^j_{s-}\right)\mathds 1_{\left\lbrace z\leq \frac{K\left(X^i_{s-},X^j_{s-}\right)}{n} \right\rbrace }\mathds 1_{\{j \leq N(s-)\}}\,\tilde{J}(ds,d(i,j),dz)\right|.\]
According to the Cauchy-Schwartz and Doob inequalities we have:
\[ \mathbb E\left[\sup_{r\in[0,t]}S_r(x)\right] \leq \left( \mathbb E\left[\sup_{r\in[0,t]} \left(S_r(x)\right)^2\right] \right)^{\frac{1}{2}} \leq 2 \left(\mathbb E\left[\left(S_t(x)\right)^2\right]\right)^{\frac{1}{2}}. \]
Therefore, we obtain the following bound for the expectation of $A_3(t)$:
\begin{eqnarray}\label{Dev:Bound_A3_1}
\mathbb E\left[\sup_{s\in[0,t]}A_3(s)\right] &\leq &\frac{4\sqrt{\kappa_0} }{\sqrt{n}} \int_0^{+\infty} x^{\lambda-1}\nonumber\\ 
& & \left\lbrace  \mathbb E \left[ \int_0^t \left\langle \mu^n_s(dv)\mu^n_s(dy)\,,\,(v+y)^{\lambda}\,\mathds 1_{(0,x]}(v)   \right\rangle  \,ds \right] \right\rbrace ^{\frac{1}{2}} dx.
\end{eqnarray}
Following the value of $x$ we use different bounds: \medskip

On the one hand, for $x\leq 1$ we have $\mathds 1_{(0,x]}(v) \leq \left( \frac{v}{x}\right) ^{2\lambda-\varepsilon}$ and using the bound $(v + y)^{\lambda}v^{2\lambda-\varepsilon} \leq v^{2\lambda-\varepsilon} y^{\lambda}$, we obtain:
\begin{eqnarray}\label{Dev:Bound_A3:x_0to1}
&&\int_0^{1} x^{\lambda-1} \left\lbrace  \mathbb E \left[ \int_0^t \big\langle \mu^n_s(dv)\mu^n_s(dy)\,,\,(v+y)^{\lambda}\,\mathds 1_{(0,x]}(v) \big\rangle \,ds \right] \right\rbrace ^{\frac{1}{2}} dx\hspace{3cm} \nonumber\\
&& \hspace{4cm} \leq \int_0^{1} x^{\lambda-1} \left\lbrace  \mathbb E \left[ \int_0^t \left\langle \mu^n_s(dv)\mu^n_s(dy)\,,\,\frac{v^{2\lambda-\varepsilon} y^{\lambda}}{x^{2\lambda-\varepsilon} } \right\rangle \,ds \right] \right\rbrace^{\frac{1}{2}}dx  \nonumber\\
&& \hspace{4cm}= \int_0^{1} x^{\frac{\varepsilon}{2}-1} dx \,\, \left\lbrace  \mathbb E \left[ \int_0^t \big\langle \mu^n_s(dv)\mu^n_s(dy)\,,\,v^{2\lambda-\varepsilon} y^{\lambda}   \big\rangle \,ds \right] \right\rbrace ^{\frac{1}{2}} \nonumber\\
&&\hspace{4cm} = \frac{2}{\varepsilon} \left\lbrace  \mathbb E \left[ \int_0^t M_{\lambda}(\mu^n_s)\, M_{2\lambda-\varepsilon} (\mu^n_s) \,ds \right] \right\rbrace ^{\frac{1}{2}}.
\end{eqnarray}

On the other hand, for $x>1$ we have $\mathds 1_{(0,x]}(v) \leq \left( \frac{v}{x}\right) ^{\lambda} $ and using the bound $(v + y)^{\lambda}v^{\lambda} \leq v^{\lambda} y^{\lambda}$, we obtain:
\begin{eqnarray}\label{Dev:Bound_A3:x_1toinf}
&& \int_1^{+\infty} x^{\lambda-1} \left\lbrace  \mathbb E \left[ \int_0^t \big\langle \mu^n_s(dv)\mu^n_s(dy)\,,\,(v+y)^{\lambda}\,\mathds 1_{(0,x]}(v)   \big\rangle \,ds \right] \right\rbrace ^{\frac{1}{2}} dx \hspace{3cm} \nonumber\\
&&\hspace{4cm} \leq\int_1^{+\infty} x^{\lambda-1} \left\lbrace  \mathbb E \left[ \int_0^t \left\langle \mu^n_s(dv)\mu^n_s(dy)\,,\,\frac{v^{\lambda}y^{\lambda}}{x^{\lambda}} \right\rangle \,ds \right] \right\rbrace ^{\frac{1}{2}} dx \nonumber\\
&&\hspace{4cm} = \int_1^{+\infty} x^{\frac{\lambda}{2}-1} dx \,\, \left\lbrace  \mathbb E \left[ \int_0^t \big\langle \mu^n_s(dv)\mu^n_s(dy)\,,\,v^{\lambda}y^{\lambda}   \big\rangle \,ds \right] \right\rbrace ^{\frac{1}{2}} \nonumber\\
&&\hspace{4cm} = \frac{2}{|\lambda|} \left\lbrace  \mathbb E \left[ \int_0^t \left[M_{\lambda}(\mu^n_s)\right]^2 \,ds \right] \right\rbrace ^{\frac{1}{2}}.
\end{eqnarray}

Then, writing the right-hand side integral of (\ref{Dev:Bound_A3_1}) as the sum of the integrals on $x\in(0,1]$ and $x\in(1,+\infty)$, gathering (\ref{Dev:Bound_A3:x_0to1}) and (\ref{Dev:Bound_A3:x_1toinf}), we get
\begin{eqnarray}\label{Dev:Bound_A3}
\mathbb E\left[\sup_{s\in[0,t]}A_3(s)\right] & \leq & \frac{8\sqrt{\kappa_0} }{\sqrt{n}} \Bigg\{ \frac{1}{\varepsilon} \left(  \mathbb E \left[ \int_0^t M_{\lambda}(\mu^n_s) M_{2\lambda-\varepsilon} (\mu^n_s) \,ds \right]\right) ^{\frac{1}{2}} \nonumber \\
& &\hspace{5cm} + \frac{1}{|\lambda|} \left(  \mathbb E \left[ \int_0^t \left[M_{\lambda}(\mu^n_s)\right]^2 \,ds\right] \right)^{\frac{1}{2}}\Bigg\}.
\end{eqnarray}

\noindent
\underline{\textbf{Conclusion.}}\medskip

Gathering (\ref{Dev:BoundGral}), (\ref{Dev:Bound_A1}), (\ref{Dev:Bound_A2}) and (\ref{Dev:Bound_A3}), we have:
\begin{eqnarray}\label{Dev:Final_casneg}
\mathbb E\left[\sup_{s\in[0,t]} d_{\lambda}(\mu^n_s,\mu_s)  \right]  & \leq & \mathbb E\left[ d_{\lambda}(\mu^n_0,\mu_0)  +\sup_{s\in[0,t]}A_1(s) + \sup_{s\in[0,t]}A_2(s)+\sup_{s\in[0,t]}A_3(s)\right] \nonumber\\
& \leq & d_{\lambda}(\mu^n_0,\mu_0)  + \left(\kappa_0 + \frac{\kappa_1}{|\lambda|}\right)\int_0^t \mathbb E\left[ d_{\lambda}(\mu^n_s,\mu_s)\, M_{\lambda}(\mu^n_s + \mu_s)\right] ds \nonumber\\
& & +\, \frac{2^{\lambda}\kappa_0}{n\,|\lambda|} \int_0^t \mathbb E\left[ M_{2\lambda}(\mu^n_s)\right] \, ds\nonumber\\
& & +\, \frac{8\sqrt{\kappa_0} }{\sqrt{n}} \Bigg\{ \frac{1}{\varepsilon} \left(  \mathbb E \left[ \int_0^t M_{\lambda}(\mu^n_s)\, M_{2\lambda-\varepsilon} (\mu^n_s) \,ds \right]\right) ^{\frac{1}{2}}\nonumber\\
& & \hspace{1.7cm} + \frac{1}{|\lambda|} \left(  \mathbb E \left[ \int_0^t \left[M_{\lambda}(\mu^n_s)\right]^2 \,ds\right] \right) ^{\frac{1}{2}}\Bigg\}.\nonumber
\end{eqnarray}
According to Proposition \ref{Prop:bounded_moments} --\textit{(\ref{Prop:bounded_moments_neg})}, $M_{\alpha}(\mu^n_t+\mu_t) \leq M_{\alpha}(\mu^n_0+\mu_0)$ a.s. for any $\alpha \in (-\infty,0)$. Since $\mu^n_0$ is deterministic, we get:
\begin{eqnarray}\label{Dev:pre-Gronwall}
\mathbb E\left[\sup_{s\in[0,t]} d_{\lambda}(\mu^n_t,\mu_t)  \right]  & \leq & d_{\lambda}(\mu^n_0,\mu_0)  + \left(\kappa_0 + \frac{\kappa_1}{|\lambda|}\right)M_{\lambda}(\mu^n_0+\mu_0)\int_0^t \mathbb E\left[ d_{\lambda}(\mu^n_s,\mu_s)\, \right] ds\nonumber\\
& & + \frac{2^{\lambda}\kappa_0}{n\,|\lambda|} M_{2\lambda}(\mu^n_0)\,t + \frac{8\sqrt{\kappa_0} }{\sqrt{n}} \left[ \frac{1}{\varepsilon} \big( M_{\lambda}(\mu^n_0)\, M_{2\lambda-\varepsilon} (\mu^n_0)\big)^{\frac{1}{2}} + \frac{1}{|\lambda|} M_{\lambda}(\mu^n_0)\right] t^{\frac{1}{2}}.
\end{eqnarray}
Finally, since $\sqrt{ab} \leq a + b$ and since $M_{2\lambda}(\mu_0^n) \leq  M_{\lambda}(\mu_0^n) + M_{2\lambda-\varepsilon}(\mu_0^n)$, we use the Gronwall lemma to obtain
\begin{eqnarray}
\mathbb E\left[\sup_{t\in[0,T]} d_{\lambda}(\mu^n_t,\mu_t)  \right] & \leq & \left[ d_{\lambda}(\mu^n_0,\mu_0)  + \frac{C_1}{\sqrt{n}}\,M_{\lambda }(\mu^n_0) + \frac{C_2}{\sqrt{n}}\, M_{2\lambda-\varepsilon} (\mu^n_0)\big) \right]\nonumber\\ 
& & \hspace{3cm} \times  \exp\bigg[T\left(\kappa_0 + \frac{\kappa_1}{|\lambda|}\right)M_{\lambda}(\mu^n_0+\mu_0)\bigg],
\end{eqnarray}
where $C_1 = \frac{2^{\lambda}T\kappa_0}{|\lambda|} +   \frac{8(\varepsilon+ |\lambda|)}{\varepsilon\,|\lambda|}\sqrt{T\kappa_0}$ and $C_2=\frac{2^{\lambda}T\kappa_0}{|\lambda|} + \frac{8}{\varepsilon}\sqrt{T\kappa_0}$. \medskip

This concludes the proof of Theorem \ref{theorem} under (\ref{case1}).

\section{Positive Case}\label{Positive_Case}
\setcounter{equation}{0}
In the whole section, we assume that $K$ satisfies (\ref{case2}) for some fixed $\lambda\in(0,1]$. We fix $\varepsilon > 0$, and we assume that $\mu_0 \in \mathcal M^+_{0} \cap \mathcal M^+_{\gamma+\varepsilon}$ where $\gamma = \max\{2\lambda,\,4\lambda-1\}$. We denote by $(\mu_t)_{t\geq 0}$ the unique $(\mu_0,K,\lambda)$-weak solution to the Smoluchowski equation. We also consider the $(n,K,\mu_0^n)$-Marcus Lushnikov process, for some given initial condition $\mu_0^n=\frac{1}{n} \sum_{i=1}^N \delta_{x_i}$.\medskip

We assume without loss of generality, for $\lambda \in (0,1/2)$, that $\varepsilon < \frac{1}{2} - \lambda$. Indeed, if $\varepsilon \geq \frac{1}{2} - \lambda$, it suffices to consider $\tilde{\varepsilon} < \frac{1}{2} - \lambda$, to apply Theorem \ref{theorem} with $\tilde{\varepsilon}$, and to use the bound $M_{2\lambda+\tilde{\varepsilon}}(\mu_0^n + \mu_0) \leq M_{0}(\mu_0^n + \mu_0) + M_{2\lambda + \varepsilon}(\mu_0^n + \mu_0)$ to conclude. \medskip

We first present a lemma of which the proof is developed in the appendix.
\begin{lemma} \label{lemma_integrals}
We introduce, for $x\in(0,+\infty)$, the following function: 
\begin{equation}\label{theta}
\theta_{(x)}^n = \dfrac{1}{\sqrt{n}}\mathds 1_{(0,1]}(x) + \dfrac{x^{-2\lambda - \varepsilon}}{\sqrt{n}}\,\mathds 1_{(1,+\infty)}(x).
\end{equation}
Then,
\begin{enumerate}[(i)]
\item \label{lemma:1} $\int_0^{+\infty} x^{\lambda - 1}\theta_{(x)}^n dx \leq \frac{2}{\lambda \sqrt{n}}$,
\item \label{lemma:2} $\int_0^{+\infty} x^{2\lambda - 1}\theta_{(x)}^n dx \leq \frac{\lambda + \varepsilon}{\lambda\varepsilon\, \sqrt{n}}$,
\item \label{lemma:3} for $(v,y)\in (0,+\infty)^2$
\begin{eqnarray*}
&&  v^{\lambda}\int_0^{+\infty}  \frac{x^{\lambda-1}}{\theta_{(x)}^n}\left(\mathds 1_{x <v\wedge y} + \mathds 1_{ v\vee y < x <v + y} \right)  dx \leq \frac{2\sqrt{n}}{\lambda}  v^{\lambda}y^{\lambda} \\
&& \hspace{1cm} +\sqrt{n}\left(2^{2\lambda+\varepsilon} + \frac{1}{\lambda}\right)\left[(v\wedge y)^{2\lambda}(v\vee y)^{2\lambda+\varepsilon}\mathds 1_{\lambda\in(0,1/2)} + (v\wedge y)(v\vee y)^{4\lambda+\varepsilon-1}\mathds 1_{\lambda\in[1/2,1]} \right].
\end{eqnarray*}
\end{enumerate}
\end{lemma}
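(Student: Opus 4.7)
The plan is that parts (i) and (ii) are elementary explicit integrals, while part (iii) is the substantive claim and reduces to a uniform bound on $1/\theta^n_{(x)}$ followed by a case analysis in both $\alpha:=3\lambda+\varepsilon$ and $\lambda$.

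For (i) and (ii), use that $\theta^n_{(x)}=1/\sqrt n$ on $(0,1]$ and $\theta^n_{(x)}=x^{-2\lambda-\varepsilon}/\sqrt n$ on $(1,+\infty)$, and integrate directly on each piece: for (i) one obtains $\frac{1}{\sqrt n}\bigl(\frac{1}{\lambda}+\frac{1}{\lambda+\varepsilon}\bigr)\leq\frac{2}{\lambda\sqrt n}$, and for (ii) $\frac{1}{\sqrt n}\bigl(\frac{1}{2\lambda}+\frac{1}{\varepsilon}\bigr)\leq\frac{\lambda+\varepsilon}{\lambda\varepsilon\sqrt n}$.

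For (iii), the starting point is the uniform bound
\[
\frac{x^{\lambda-1}}{\theta^n_{(x)}}\;\leq\;\sqrt n\bigl(x^{\lambda-1}+x^{3\lambda+\varepsilon-1}\bigr),\qquad x>0,
\]
checked on the two defining ranges of $\theta^n$ separately (equality holds on each range up to the other term). Writing $m=v\wedge y$, $M=v\vee y$ and $\alpha=3\lambda+\varepsilon$, this reduces (iii) to bounding $\sqrt n\,v^\lambda(I_1+I_2)$ with
\[
I_1=\int_0^m x^{\lambda-1}dx+\int_M^{v+y}x^{\lambda-1}dx,\qquad I_2=\int_0^m x^{\alpha-1}dx+\int_M^{v+y}x^{\alpha-1}dx.
\]
Subadditivity of $t\mapsto t^\lambda$ (valid for $\lambda\leq 1$) gives $(v+y)^\lambda-M^\lambda\leq m^\lambda$, so $I_1\leq 2m^\lambda/\lambda$, which, multiplied by $v^\lambda$ and using $m\leq y$, produces the leading term $\frac{2\sqrt n}{\lambda}v^\lambda y^\lambda$. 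For $I_2$, apply the mean-value identity $(v+y)^\alpha-M^\alpha=\int_M^{M+m}\alpha\,t^{\alpha-1}dt$ and bound the integrand by $M^{\alpha-1}$ if $\alpha\leq 1$ and by $(2M)^{\alpha-1}$ if $\alpha\geq 1$, giving $(v+y)^\alpha-M^\alpha\leq\alpha\,2^{(\alpha-1)_+}mM^{\alpha-1}$; since $2^{3\lambda+\varepsilon-1}\leq 2^{2\lambda+\varepsilon}$ for $\lambda\leq 1$, one deduces $v^\lambda I_2\leq\frac{v^\lambda m^\alpha}{\lambda}+2^{2\lambda+\varepsilon}v^\lambda mM^{\alpha-1}$.

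It remains to identify the target envelopes. For $\lambda\in(0,1/2)$, the inequalities $v^\lambda\leq M^\lambda$, $m^{1-2\lambda}\leq M^{1-2\lambda}$, and $m^{\lambda+\varepsilon}\leq M^{\lambda+\varepsilon}$ show that both $v^\lambda m^\alpha$ and $v^\lambda mM^{\alpha-1}$ are bounded by $m^{2\lambda}M^{2\lambda+\varepsilon}$; for $\lambda\in[1/2,1]$ one has $\alpha\geq 3/2$, so $m^\alpha\leq mM^{\alpha-1}$, and $v^\lambda mM^{\alpha-1}\leq mM^{4\lambda+\varepsilon-1}$. Summing $v^\lambda I_1$ and $v^\lambda I_2$ with total coefficient $\frac{1}{\lambda}+2^{2\lambda+\varepsilon}$ on the non-leading term then yields the claimed estimate. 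The main subtlety is this bookkeeping: one must ensure each monomial produced by $v^\lambda I_2$ lands inside the appropriate envelope and that the constant does not worsen across the split at $\alpha=1$ or at $\lambda=1/2$; using the mean-value bound uniformly (rather than subadditivity when $\alpha\leq 1$) is what keeps the $2^{2\lambda+\varepsilon}$ factor sharp.
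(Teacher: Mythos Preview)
Your proof is correct. Parts (i) and (ii) match the paper exactly. For part (iii) your argument is genuinely different from the paper's and, in fact, somewhat cleaner.

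The paper keeps the factor $1/\theta^n_{(x)}$ intact and performs a case analysis on the position of the integration range relative to the breakpoint $x=1$ of $\theta^n$: for the integral over $(0,v\wedge y)$ it distinguishes $v\wedge y\leq 1$ from $v\wedge y>1$ and integrates explicitly on each piece; for the integral over $(v\vee y,v+y)$ it uses the monotonicity of both $x^{\lambda-1}$ and $\theta^n_{(x)}$ to bound the integrand pointwise by $(v\vee y)^{\lambda-1}/\theta^n_{(v+y)}$, then splits according to whether $v+y<1$ or $v+y\geq 1$. Your approach replaces all of this by the single uniform bound $x^{\lambda-1}/\theta^n_{(x)}\leq \sqrt n\,(x^{\lambda-1}+x^{3\lambda+\varepsilon-1})$, which eliminates the breakpoint case analysis at the cost of carrying two power integrals $I_1,I_2$; the $I_1$ contribution is dispatched by subadditivity of $t\mapsto t^\lambda$, and only $I_2$ requires the mean-value estimate and the $\lambda$-dependent envelope identification. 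Both routes land on exactly the same constants; yours trades explicit piecewise integration for a slightly slicker global inequality.
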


We set $E_n(t,x) = F^{\mu^ n_t}(x) - F^{\mu_t}(x)$ as defined in (\ref{Intro:MetricEq1}), for $x\in (0,+\infty)$. We take the test function $\phi(v) = \mathds 1_{(x,+\infty)}(v)$. Since $\sup_{v>0}\frac{|\phi(v)|}{(1+v)^{\lambda} }= (1 + x)^{-\lambda}<+\infty$, we deduce that $\phi \in \mathcal H_{\lambda}$. Again, computing the difference between equations (\ref{Dev:Eq-mu_n-2}) and (\ref{Intro:SmoEq_Weak}) and using a symmetry argument for the first integral, we get
\begin{eqnarray}\label{Dev:EtxCasPos}
E_n(t,x) & = & E_n(0,x) + \dfrac{1}{2} \int_0^t \langle \left( \mu^n_s - \mu_s \right)(dv)\left( \mu^n_s + \mu_s \right)(dy), \left(A\mathds 1_{(x,+\infty)}\right)(v,y)K(v,y) \rangle ds \nonumber\\
& & -\, \dfrac{1}{2n} \int_0^t \langle \mu^n_s(dv),  \left(A\mathds 1_{(x,+\infty)}\right)(v,v) K(v,v) \rangle ds \\
& & +\, \int_0^t \int_{i<j} \int_0^{+\infty} \dfrac{1}{n}  \left(A\mathds 1_{(x,+\infty)}\right)\left(X^i_{s-},X^j_{s-}\right) \mathds 1_{\left\lbrace z\leq \frac{K\left(X^i_{s-},X^j_{s-}\right)}{n} \right\rbrace } \mathds 1_{\{ j \leq N(s-)\}} \nonumber\\
& & \hspace{8cm} \,\tilde{J}(ds,d(i,j),dz). \nonumber
\end{eqnarray}
According to Lemma \ref{Dev:lemmaforfubini}, we can write the first integral as:
$$\int_0^t   \int_0^{+\infty} \int_0^{+\infty} K(v,y)  \left(A\mathds 1_{(x,+\infty)}\right)(v,y) \left( \mu^n_s - \mu_s \right)(dv)\left( \mu^n_s + \mu_s \right)(dy) ds \hspace{3.5cm}$$
\begin{eqnarray}
&=&\int_0^t   \int_0^{+\infty} \int_0^{+\infty} \bigg\{ \mathds 1_{x>y}K(x-y,y)\mathds 1_{(x,+\infty)}(v+y) -  K(x,y)\mathds 1_{(x,+\infty)}(v) \nonumber\\ 
&&\hspace{1cm}+\,\int_0^{v}\partial_x K(z,y) \left(A\mathds 1_{(x,+\infty)}\right)(z,y)\,dz\bigg\} \left( \mu^n_s - \mu_s \right)(dv)\left( \mu^n_s + \mu_s \right)(dy) ds \nonumber
\end{eqnarray}
\begin{eqnarray}
&=& \int_0^t \int_0^{+\infty}  K(x-y,y) \left[\mathds 1_{x>y} \int_0^{+\infty} \mathds 1_{(x-y,+\infty)}(v) \left( \mu^n_s - \mu_s \right)(dv)\right]\left( \mu^n_s + \mu_s \right)(dy) ds\nonumber\\
&&-\,\int_0^t  \int_0^{+\infty} K(x,y) \left[ \int_0^{+\infty} \mathds 1_{(x,+\infty)}(v) \left( \mu^n_s - \mu_s \right)(dv)\right]\left( \mu^n_s + \mu_s \right)(dy) ds \nonumber\\
&& + \int_0^t \int_0^{+\infty} \int_0^{+\infty} \partial_x K(z,y) \left(A\mathds 1_{(x,+\infty)}\right)(z,y) \nonumber\\
&& \hspace{3cm}\left[\int_0^{+\infty} \mathds 1_{(z,+\infty)}(v) \left( \mu^n_s - \mu_s \right)(dv)\right] dz\left( \mu^n_s + \mu_s \right)(dy) ds.\nonumber
\end{eqnarray}
Recalling that $ E_n(s,x) =  \int_0^{+\infty} \mathds 1_{(x,+\infty)}(v) \left( \mu^n_s - \mu_s \right)(dv)$, we deduce that,
\begin{eqnarray}\label{Dev:EtxCasPos_2}
E_n(t,x) & = & E_n(0,x) + \frac{1}{2}\int_0^t \left[ \overline{B}_1(s,x) + \overline{B}_2(s,x) + \overline{B}_3(s,x)\right] ds \nonumber\\
& & +\, \int_0^t \int_{i<j} \int_0^{+\infty} \dfrac{1}{n} \left(A\mathds 1_{(x,+\infty)}\right)\left(X^i_{s-},X^j_{s-}\right) \mathds 1_{\left\lbrace z\leq \frac{K\left(X^i_{s-},X^j_{s-}\right)}{n} \right\rbrace } \mathds 1_{\{ j \leq N(s-)\}} \\
& & \hspace{8cm}\,\tilde{J}(ds,d(i,j),dz), \nonumber
\end{eqnarray}
where:
\begin{eqnarray}
\overline{B}_1(s,x) & = &   \int_0^{+\infty} \big[ \mathds 1_{x>y} K(x-y,y) E_n(s,x-y)  - E_n(s,x) K(x,y) \big]  \left( \mu^n_s + \mu_s \right)(dy),\nonumber\\
\overline{B}_2(s,x) & = & \int_0^{+\infty} \int_0^{+\infty} \partial_x K(z,y) \left(A\mathds 1_{(x,+\infty)}\right)(z,y) E_n(s,z) dz \left( \mu^n_s + \mu_s \right)(dy),\nonumber\\
\overline{B}_3(s,x)& = & -\dfrac{1}{n} \int_0^{+\infty} K(v,v) \left[\mathds 1_{(x,+\infty)}(2v) - 2\, \mathds 1_{(x,+\infty)}(v) \right] \mu^n_s(dv).\nonumber
\end{eqnarray}

Now, we apply the It\^o formula to $\varphi_{\theta}(E_n(t,x))$, where $\varphi_{\theta}(\cdot)\in\mathcal C^2(\mathbb R)$ is an approximation of the absolute value function $|\cdot|$. This function is chosen in such a way that:
\begin{equation}\label{Dev:CondPhi}
\left\{
\begin{array}{lll}
\varphi_{\theta}(u) = |u|\,\, \textrm{ if }|u| > \theta;  &|u| \leq \varphi_{\theta}(u) \leq |u| + \theta \,\,\, \forall u \in \mathbb R; \\[2mm]
|\varphi'_{\theta}(u)| \leq 1 \,\,\,\, \forall u \in \mathbb R;  & sgn(u\varphi'_{\theta}(u))=1  \,\, \forall u \in \mathbb R_* ;\\[2mm]
|\varphi''_{\theta}(u)| \leq \dfrac{2}{\theta} \mathds 1_{\{|u| < \theta\}} \,\,\, \forall u \in \mathbb R.  &
\end{array}\right.
\end{equation}
Furthermore, we consider for $\theta$ the function defined by (\ref{theta}). We fix $x\in(0,+\infty)$ and apply the It\^o formula to $\varphi_{\theta_{(x)}^n}(E_n(t,x))$ (see for exemple \cite{Jacod}),
\begin{eqnarray}\label{Dev:ItoCasPos}
\varphi_{\theta_{(x)}^n}(E_n(t,x))& = & \varphi_{\theta_{(x)}^n}(E_n(0,x))\nonumber \\
& & + \frac{1}{2} \int_0^t  \left[ \overline{B}_1(s,x) +\overline{B}_2(s,x) + \overline{B}_3(s,x)\right]  \varphi_{\theta_{(x)}^n}'(E_n(s,x))ds \\
& & + M(t,x) + \overline{B}_4(t,x), \nonumber
\end{eqnarray}
where
\begin{eqnarray} 
M(t,x) & = & \int_0^t \int_{i<j} \int_0^{+\infty} \dfrac{1}{n} \left(A\mathds 1_{(x,+\infty)}\right)\left(X^i_{s-},X^j_{s-}\right) \mathds 1_{\left\lbrace z\leq \frac{K\left(X^i_{s-},X^j_{s-}\right)}{n} \right\rbrace } \mathds 1_{\{j\leq N(s-)\}}\nonumber\\
& & \hspace{5.5cm} \varphi_{\theta_{(x)}^n}'(E_n(s-,x))\tilde{J}(ds,d(i,j),dz),\nonumber\\
\overline{B}_4(t,x) & = & \int_0^t \int_{i<j} \int_0^{+\infty}  \bigg\{ \varphi_{\theta_{(x)}^n}\left(E_n(s-,x) + \dfrac{1}{n}\left(A\mathds 1_{(x,+\infty)}\right)\left(X^i_{s-},X^j_{s-}\right)\right) - \varphi_{\theta_{(x)}^n}(E_n(s-,x))  \nonumber \\
&& \hspace{1cm} -\dfrac{1}{n}\left(A\mathds 1_{(x,+\infty)}\right) \left(X^i_{s-},X^j_{s-}\right) \varphi_{\theta_{(x)}^n}'(E_n(s-,x))\bigg\}\mathds 1_{\left\lbrace z\leq \frac{K\left(X^i_{s-},X^j_{s-}\right)}{n} \right\rbrace }\mathds 1_{\{j\leq N(s-)\}}  \nonumber \\
& & \hspace{10cm}J(ds,d(i,j),dz). \nonumber
\end{eqnarray}
Observe that, for all $x\geq 0$, $M(t,x)$ is a martingale whose expectation is equal to zero.\medskip

Now, we study the $\theta^n$-approximation of $d_{\lambda}(\mu^n_t,\mu_t)$: $\int_0^{+\infty} x^{\lambda-1} \varphi_{\theta_{(x)}^n}(E_n(t,x)) dx$. According to (\ref{Dev:CondPhi}) and Lemma \ref{lemma_integrals} --\textit{(\ref{lemma:1})}, we have:
\begin{equation}\label{Dev:Convergence}
d_{\lambda}(\mu_s^n,\mu_s) \leq \int_0^{+\infty} x^{\lambda-1} \varphi_{\theta_{(x)}^n}(E_n(s,x)) dx \leq d_{\lambda}(\mu_s^n,\mu_s) + \frac{2}{\lambda \sqrt{n}}.
\end{equation}
Consider (\ref{Dev:ItoCasPos}), integrate each term against $x^{\lambda-1}dx$ on $(0+\infty)$, take the expectation:
\begin{eqnarray}\label{Dev:afterIto}
\mathbb E \left[d_{\lambda}(\mu_t^n,\mu_t) \right]& \leq & \int_0^{+\infty} x^{\lambda-1} \mathbb E \left[\varphi_{\theta_{(x)}^n}(E_n(t,x))\right] dx \nonumber \\
& = &  \int_0^{+\infty} x^{\lambda-1}\varphi_{\theta_{(x)}^n}(E_n(0,x)) dx + \mathbb E \left[ B_1(t) +  B_2(t) +  B_3(t) + B_4(t)\right],
\end{eqnarray}
where
\begin{eqnarray*}
B_1(t) & = &  \frac{1}{2} \int_0^{+\infty} \int_0^t  x^{\lambda-1} \overline{B}_1(s,x) \varphi_{\theta_{(x)}^n}'(E_n(s,x)) \,ds\,dx, \\
B_2(t) & = &  \frac{1}{2} \int_0^{+\infty} \int_0^t  x^{\lambda-1} \overline{B}_2(s,x) \varphi_{\theta_{(x)}^n}'(E_n(s,x)) \,ds\,dx, \\
B_3(t) & = &  \frac{1}{2}\int_0^{+\infty} \int_0^t  x^{\lambda-1} \overline{B}_3(s,x) \varphi_{\theta_{(x)}^n}'(E_n(s,x)) \,ds\,dx,\\
B_4(t) & = &  \int_0^{+\infty}  x^{\lambda-1} \overline{B}_4(t,x) \,dx.
\end{eqnarray*}
We now study each term separately.\medskip

\noindent
\textbf{\underline{Term $B_1(t)$.}}\medskip

We use the Fubini theorem to obtain: 
\begin{eqnarray}
B_1(t) & = & \frac{1}{2} \int_0^t \int_0^{+\infty}\bigg[\int_0^{+\infty}\mathds 1_{x>y}\, x^{\lambda-1}\, \varphi'_{\theta_{(x)}^n}\left(E_n(s,x)\right) E_n(s,x-y) K(x-y,y) dx \nonumber\\
& &\hspace{1cm}-\int_0^{+\infty} x^{\lambda-1}\, \varphi'_{\theta_{(x)}^n}\left(E_n(s,x)\right)  E_n(s,x) K(x,y) \,dx \bigg]\left( \mu^n_s + \mu_s \right)(dy)\,ds. \nonumber
\end{eqnarray}
Recalling (\ref{Dev:CondPhi}), we immediately deduce that $\varphi'_{\theta_{(x)}^n}\left(E_n(s,x)\right)\,E_n(s,x-y) \leq \left| E_n(s,x-y)  \right|$, and $\varphi'_{\theta_{(x)}^n}\left(E_n(s,x)\right)\,E_n(s,x) = \left| \varphi'_{\theta_{(x)}^n} \left(E_n(s,x)\right)\right| \,\left| E_n(s,x)\right|$. Therefore, using the change of variable $x \mapsto u+y$ in the first integral, we get:
\begin{eqnarray}
B_1(t) & \leq & \frac{1}{2} \int_0^t \int_0^{+\infty}\bigg[\int_0^{+\infty} (u+y)^{\lambda-1}\, \left| E_n(s,u)\right| K(u,y)\, du \nonumber\\
&  & \hspace{0.6cm}-\int_0^{+\infty} x^{\lambda-1}\, \left|\varphi'_{\theta_{(x)}^n}\left(E_n(s,x)\right)\right|\,  \left| E_n(s,x)\right| K(x,y) \,dx\bigg] \left( \mu^n_s + \mu_s \right)(dy)\,ds\nonumber\\
& = & \frac{1}{2} \int_0^t \int_0^{+\infty}\int_0^{+\infty}  K(z,y)\,\left| E_n(s,z)\right| \left[(z+y)^{\lambda-1}\,- \left|\varphi'_{\theta_{(z)}^n}\left(E_n(s,z)\right)\right| \,z^{\lambda-1}\right] dz \nonumber\\
&  & \hspace{8.8cm} \left( \mu^n_s + \mu_s \right)(dy)\,ds.\nonumber
\end{eqnarray}
Recall again (\ref{Dev:CondPhi}). Since $|E_n(s,z)|\geq \theta_{(z)}^n$ implies $\left|\varphi'_{\theta_{(z)}^n} \left(E_n(s,z)\right)\right|=1$, and since $(z+y)^{\lambda-1}-z^{\lambda-1} \leq 0$, 
\begin{eqnarray*}
\left| E_n(s,z)\right| \left[(z+y)^{\lambda-1} - |\varphi'_{\theta_{(z)}^n}\left(E_n(s,z)\right)| \, z^{\lambda-1}\right]&  \leq & \left| E_n(s,z)\right|  (z+y)^{\lambda-1} \mathds 1_{\left\lbrace |E_n(s,z)|< \theta_{(z)}^n \right\rbrace}\\
& \leq & \theta_{(z)}^n (z+y)^{\lambda-1}.
\end{eqnarray*}
Therefore, using (\ref{case2}):
\begin{eqnarray*}
B_1(t)& \leq & \frac{\kappa_0}{2} \int_0^t \int_0^{+\infty}\int_0^{+\infty}\theta_{(z)}^n \,(z+y)^{2\lambda-1} \,dz \left( \mu^n_s + \mu_s \right)(dy)\,ds  \\
& \leq & \frac{\kappa_0}{2}\int_0^t \int_0^{+\infty} \left[ \int_0^{+\infty} \theta_{(z)}^n z^{2\lambda-1} dz + y^{\lambda} \int_0^{+\infty}\theta_{(z)}^n z^{\lambda-1} dz \right] \left( \mu^n_s + \mu_s \right)(dy)\,ds. 
\end{eqnarray*}
We used $(z+y)^{2\lambda-1} =(z+y)^{\lambda} (z+y)^{\lambda-1} \leq (z^{\lambda}+y^{\lambda})\, z^{\lambda-1}$. Finally, according to Lemma  \ref{lemma_integrals}--\textit{(\ref{lemma:1})} and \textit{(\ref{lemma:2})}, we get:
\begin{eqnarray}\label{Dev:Bound_B1}
B_1(t)& \leq & \frac{\kappa_0}{2}\int_0^t \int_0^{+\infty} \left[ \frac{2(\lambda+\varepsilon)}{\lambda\varepsilon\sqrt{n}} \left(1 + y^{\lambda}\right) \right] \left( \mu^n_s + \mu_s \right)(dy)\,ds\nonumber\\
& \leq &\frac{\kappa_0(\lambda+\varepsilon)}{\lambda\varepsilon\sqrt{n}} \int_0^t \left[ M_{0}(\mu_s^n + \mu_s) + M_{\lambda}(\mu_s^n + \mu_s)\right]ds. 
\end{eqnarray}

\noindent
\textbf{\underline{Term $B_2(t)$.}}\medskip

First, observe that
\begin{eqnarray}\label{Dev:Bound_IndicCasPos} 
\left|\left(A\mathds 1_{(x,+\infty)}\right)(z,y)\right| & = & \left|\mathds 1_{(x,+\infty)}(z+y)- \mathds 1_{(x,+\infty)}(z) -  \mathds 1_{(x,+\infty)}(y)\right| \nonumber \\
& = & \mathds 1_{\{x \in(0,z\wedge y)\}} + \,\mathds 1_{\{x \in(z \vee y , z+y)\}}\,,
\end{eqnarray}
whence,
\begin{eqnarray}\label{Bound_int_inf}
\int_0^{+\infty} x^{\lambda-1} \left|\left(A\mathds 1_{(x,+\infty)}\right)(z,y)\right| dx  & = & \int_{0}^{z\wedge y} x^{\lambda-1} dx + \int_{z \vee y}^{z + y} x^{\lambda-1} dx \nonumber\\
&\leq &\frac{2}{\lambda}(z \wedge y)^{\lambda}.
\end{eqnarray}
Thus, recalling (\ref{Dev:CondPhi}), we get:
\begin{eqnarray}\label{Dev:Bound_B2}
B_2(t)& \leq &   \frac{1}{2}   \int_0^t\int_0^{+\infty} \int_0^{+\infty} |E_n(s,z)|\, |\partial_x K(z,y)|\, \left(\frac{2}{\lambda}\left( z \wedge y\right)^{\lambda}\right) \left( \mu^n_s + \mu_s \right)(dy)dz\,ds \nonumber\\
& \leq &   \frac{\kappa_1}{\lambda}  \int_0^t\int_0^{+\infty} \int_0^{+\infty} |E_n(s,z)|z^{\lambda-1}y^{\lambda}\left( \mu^n_s + \mu_s \right)(dy)dz\,ds \nonumber\\
& \leq &   \frac{\kappa_1}{\lambda} \int_0^t d_{\lambda}(\mu^n_s,\mu_s)\,M_{\lambda}(\mu^n_s +\mu_s)ds. 
\end{eqnarray}
We used (\ref{case2}).\medskip

\noindent
\underline{\textbf{Term $B_3(t)$.}}\medskip

Remark that $\left|\left(A\mathds 1_{(x,+\infty)}\right)(v,v)\right| = |\mathds 1_{(x,+\infty)}(2v) - 2\, \mathds 1_{(x,+\infty)}(v)| \leq \mathds 1_{\{v > \frac{x}{2} \}}$. \medskip

\noindent
Since $\int_0^{+\infty} \mathds 1_{\{v > \frac{x}{2}\}} x^{\lambda-1} dx = \frac{\left(2v\right)^{\lambda}}{\lambda}$, we deduce:
 \begin{eqnarray}\label{Dev:Bound_B3}
B_3(t)& \leq &\frac{1}{2n} \int_0^{+\infty} x^{\lambda-1} \int_0^t \int_0^{+\infty} K(v,v) \left|\left(A\mathds 1_{(x,+\infty)}\right)(v,v)\right|\mu^n_s(dv)\,ds dx\nonumber\\
& \leq & \frac{\kappa_0}{2\lambda\,n}\int_0^t ds\int_0^{+\infty} (2v)^{2\lambda} \mu^n_s(dv)\nonumber\\
& \leq & \frac{2^{2\lambda-1}\kappa_0}{\lambda\,n} \int_0^t M_{2\lambda}(\mu^n_s)\, ds.
\end{eqnarray}
We used (\ref{Dev:CondPhi}) and (\ref{case2}).\medskip

\noindent
\textbf{\underline{Term $B_4(t)$.}}\medskip

First, remark that from (\ref{Dev:CondPhi}) we have $\left|\varphi''_{\theta_{(x)}^n}(z)\right| \leq \frac{2}{\theta_{(x)}^n}$ for all $z$, whence, due to the Taylor-Lagrange inequality, 
\begin{eqnarray}
&& \bigg| \varphi_{\theta_{(x)}^n}\left(E_n(s,x) + \dfrac{1}{n}\left(A\mathds 1_{(x,+\infty)}\right)\left(X^i_{s},X^j_{s}\right)\right)- \varphi_{\theta_{(x)}^n}(E_n(s,x))  \hspace{3cm}\nonumber\\
&& \hspace{6cm} -\dfrac{1}{n}\left(A\mathds 1_{(x,+\infty)}\right)\left(X^i_{s},X^j_{s}\right)\varphi_{\theta_{(x)}^n}'(E_n(s,x))\bigg| \nonumber \\
&&\hspace{5cm} \leq \dfrac{2}{\theta_{(x)}^n} \left[\dfrac{1}{n}\left(A\mathds 1_{(x,+\infty)}\right)(X^i_s,X^j_s)\right]^2. \nonumber
\end{eqnarray}
Then,
\begin{eqnarray}
\mathbb E \left[ B_4(t)\right]& \leq &  \int_0^{+\infty} x^{\lambda-1} \mathbb E \Bigg[\int_0^t \int_{i<j} \int_0^{+\infty} \dfrac{2}{\theta_{(x)}^n} \left[\dfrac{1}{n}\left(A\mathds 1_{(x,+\infty)}\right)(X^i_{s-},X^j_{s-})\right]^2  \mathds 1_{\{j\leq N(s-)\}} \nonumber\\
&&\hspace{4.6cm}\mathds 1_{\left\lbrace z\leq \frac{K\left(X^i_{s-},X^j_{s-}\right)}{n} \right\rbrace } \,J(ds,d(i,j),dz)\Bigg]dx \nonumber\\
&\leq & \frac{2}{n} \int_0^t \mathbb E \left[\int_0^{+\infty} x^{\lambda-1} \sum_{i<j\leq N(s)}  \frac{K(X^i_s,X^j_s)}{n^2\,\theta_{(x)}^n}\left[\left(A\mathds 1_{(x,+\infty)}\right)(X^i_{s},X^j_{s})\right]^2  dx  \right]ds \nonumber\\
&\leq & \frac{2\kappa_0}{n} \int_0^t \mathbb E \Bigg[ \int_0^{+\infty} \dfrac{x^{\lambda-1}}{n^2\,\theta_{(x)}^n} \sum_{i<j\leq N(s)}  \left(X^i_s + X^j_s\right)^{\lambda} \nonumber \\
& & \hspace{4 cm} \left(\mathds 1_{x <X^i_s\wedge X^j_s} + \mathds 1_{ X^i_s\vee X^j_s < x <X^i_s + X^j_s} \right)  dx \Bigg]ds.\nonumber
\end{eqnarray}
We used (\ref{case2}) and (\ref{Dev:Bound_IndicCasPos}) (since the sets are disjoint, the product of indicators vanishes). Therefore, using that $(v+y)^{\lambda} < v^{\lambda} + y^{\lambda}$ and a symmetry argument, we get
\begin{equation*}
\mathbb E \left[ B_4(t)\right] \leq \frac{4\kappa_0}{n} \int_0^t \mathbb E \Bigg[\left\langle \mu^n_s(dv)\mu^n_s(dy), v^{\lambda}\int_0^{+\infty}  \frac{x^{\lambda-1}}{\theta_{(x)}^n}\left(\mathds 1_{x <v\wedge y} + \mathds 1_{ v\vee y < x <v + y} \right)  dx\right\rangle \Bigg]ds. 
\end{equation*}
According to Lemma \ref{lemma_integrals}--\textit{(\ref{lemma:3})}, and since $(v\wedge y)^{\alpha}(v\vee y)^{\beta} \leq v^{\alpha} y^{\beta} + y^{\alpha} v^{\beta}$ for $\alpha\geq0$ and $\beta\geq0$, we have
\begin{eqnarray*} 
&&\left\langle \mu^n_s(dv)\mu^n_s(dy), v^{\lambda}\int_0^{+\infty}  \frac{x^{\lambda-1}}{\theta_{(x)}^n}\left(\mathds 1_{x <v\wedge y} + \mathds 1_{ v\vee y < x <v + y} \right)  dx\right\rangle \leq \hspace{2cm}\\
&& \hspace{3cm} \frac{2\sqrt{n}}{\lambda}  \left\langle \mu^n_s(dv)\mu^n_s(dy), v^{\lambda}y^{\lambda} \right\rangle \\
&& \hspace{3cm} +  \sqrt{n}\left(2^{2\lambda+\varepsilon} + \frac{1}{\lambda}\right)\left\langle \mu^n_s(dv)\mu^n_s(dy), v^{2\lambda}y^{2\lambda+\varepsilon} +  y^{2\lambda}v^{2\lambda+\varepsilon} \right\rangle \mathds 1_{\lambda\in(0,1/2)} \\
&&  \hspace{3cm} +  \sqrt{n}\left(2^{2\lambda+\varepsilon} + \frac{1}{\lambda}\right) \left\langle \mu^n_s(dv)\mu^n_s(dy), vy^{4\lambda+\varepsilon-1} + y v^{4\lambda+\varepsilon-1}\right\rangle 1_{\lambda\in[1/2,1]}.
\end{eqnarray*} 
Finally, we deduce the bound:
\begin{eqnarray}\label{Dev:Bound_B4}
\mathbb E \left[ B_4(t)\right] & \leq &  \frac{8\kappa_0}{\lambda \sqrt{n}} \int_0^t \mathbb E \bigg[ \left[M_{\lambda}(\mu_s^n)\right]^2  + C \left[M_{2\lambda}(\mu_s^n) M_{2\lambda+\varepsilon}(\mu_s^n)\right] \mathds 1_{\lambda\in(0,1/2)}\nonumber\\
& & \hspace{2cm} + C \left[M_{1}(\mu_s^n)M_{4\lambda+\varepsilon-1}(\mu_s^n)\right]\mathds 1_{\lambda\in[1/2,1]}\bigg]ds, 
\end{eqnarray} 
where $C = \left(\lambda 2^{2\lambda+\varepsilon}+1\right)$.\medskip

\noindent
\underline{\textbf{Conclusion.}}\medskip

Gathering (\ref{Dev:Bound_B1}), (\ref{Dev:Bound_B2}), (\ref{Dev:Bound_B3}) and (\ref{Dev:Bound_B4}), from (\ref{Dev:afterIto}), we get:
\begin{eqnarray}\label{Dev:FirstUBound}
&&\mathbb E\left[ d_{\lambda}(\mu_t^n,\mu_t)\right] \hspace{3mm} \leq \hspace{3mm}  \mathbb \int_0^{+\infty} x^{\lambda-1} \varphi_{\theta_{(x)}^n}(E_n(0,x)) dx\nonumber\\
& & \hspace{6mm}+ \frac{\kappa_0(\lambda+\varepsilon)}{\lambda\varepsilon\sqrt{n}} \int_0^t \mathbb E\left[ M_{0}(\mu_s^n + \mu_s) + M_{\lambda}(\mu_s^n + \mu_s)\right]ds + \frac{\kappa_1}{\lambda} \int_0^t \mathbb E\left[  d_{\lambda}(\mu^n_s,\mu_s)\,M_{\lambda}(\mu^n_s +\mu_s)\right] ds \nonumber\\
&& \hspace{6mm} + \frac{2^{2\lambda-1}\kappa_0}{n \lambda}\int_0^t \mathbb E\left[  M_{2\lambda}(\mu^n_s) \right] ds + \frac{8 \kappa_0}{\lambda \sqrt{n}}\int_0^t \mathbb E \left[ M_{\lambda}(\mu_s^n)\right]^2 ds\nonumber\\
&& \hspace{6mm}  + \frac{8C \kappa_0}{\lambda \sqrt{n}}\int_0^t \mathbb E \Big[\left[M_{2\lambda}(\mu_s^n)\,M_{2\lambda+\varepsilon}(\mu_s^n)\right] \mathds 1_{\lambda\in(0,1/2)}+ \left[M_{1}(\mu_s^n)M_{4\lambda+\varepsilon-1}(\mu_s^n)\right]\mathds 1_{\lambda\in[1/2,1]} \Big] ds.\nonumber
\end{eqnarray}
We use (\ref{Dev:Convergence}) to bound the first term on the right-hand side. According to Proposition \ref{Prop:bounded_moments} --\textit{(\ref{Prop:bounded_moments_neg})}, $M_{\alpha}(\mu_s^n + \mu_s) \leq M_{\alpha}(\mu_0^n + \mu_0)$ a.s. for $\alpha \leq 1$. Since $\mu^n_0$ is deterministic, we get (recall that $2\lambda + \varepsilon < 1$ if $\lambda \in (0,1/2)$):
\begin{eqnarray*}
&&\mathbb E\left[ d_{\lambda}(\mu_t^n,\mu_t)\right] \leq d_{\lambda}(\mu_0^n,\mu_0)  + \frac{2}{\lambda \sqrt{n}}+ \frac{t\kappa_0(\lambda+\varepsilon)}{\lambda\varepsilon\sqrt{n}} \left( M_{0}(\mu_0^n + \mu_0)+M_{\lambda}(\mu_0^n + \mu_0)\right)\\
&& \hspace{1cm}+ \frac{\kappa_1}{\lambda} M_{\lambda}(\mu^n_0 +\mu_0)\int_0^t \mathbb E\left[  d_{\lambda}(\mu^n_s,\mu_s)\right] ds + \frac{2^{2\lambda-1}\kappa_0}{n \lambda} \int_0^t \mathbb E\left[M_{2\lambda}(\mu_s^n)\right]ds + \frac{8 t\kappa_0}{ \lambda\sqrt{n}} \left[ M_{\lambda}(\mu_0^n)\right]^2   \\
&& \hspace{1cm} + \frac{8C t\kappa_0}{\lambda\sqrt{n}} \left[M_{2\lambda}(\mu_0^n) M_{2\lambda+\varepsilon}(\mu_0^n)\right] \mathds 1_{\lambda\in(0,1/2)} \\
&& \hspace{1cm}+ \frac{8C \kappa_0}{\lambda \sqrt{n}}M_{1}(\mu_0^n)\int_0^t \mathbb E \left[ M_{4\lambda+\varepsilon-1}(\mu_s^n)\right]\mathds 1_{\lambda\in[1/2,1]} ds.
\end{eqnarray*}
Again, according to Proposition \ref{Prop:bounded_moments} --\textit{(\ref{Prop:bounded_moments_pos})}, $\mathbb E \left[ M_{\alpha}(\mu_s^n)\right] \leq M_{\alpha}(\mu_0^n) \exp[s\,C_{\lambda,\alpha} M_{\lambda}(\mu_0^n)]$ for $\alpha > 1$, and where $C_{\lambda,\alpha}$ is a positive constant depending on $\lambda$, $\alpha$ and $\kappa_0$. Thus
\begin{eqnarray*}
&&\mathbb E\left[ d_{\lambda}(\mu_t^n,\mu_t)\right] \leq d_{\lambda}(\mu_0^n,\mu_0)  + \frac{2}{\lambda \sqrt{n}}+ \frac{t\kappa_0(\lambda+\varepsilon)}{\lambda\varepsilon\sqrt{n}} \left( M_{0}(\mu_0^n + \mu_0)+M_{\lambda}(\mu_0^n + \mu_0)\right)\\
&& \hspace{1cm}+ \frac{\kappa_1}{\lambda} M_{\lambda}(\mu^n_0 +\mu_0)\int_0^t \mathbb E\left[  d_{\lambda}(\mu^n_s,\mu_s)\right] ds + \frac{2^{2\lambda-1}t\kappa_0}{n \lambda}  M_{2\lambda}(\mu_0^n) \exp[t\,C_{\lambda,\varepsilon} M_{\lambda}(\mu_0^n)]   \\
&& \hspace{1cm} + \frac{8 t\kappa_0}{\lambda\sqrt{n}} \left[ M_{\lambda}(\mu_0^n)\right]^2+ \frac{8C t\kappa_0}{\lambda\sqrt{n}} \left[M_{2\lambda}(\mu_0^n) M_{2\lambda+\varepsilon}(\mu_0^n)\right] \mathds 1_{\lambda\in(0,1/2)} \\
&& \hspace{1cm}+ \frac{8C t \kappa_0}{\lambda\sqrt{n}}M_{1}(\mu_0^n) M_{4\lambda+\varepsilon-1}(\mu_0^n)\exp[t\,C_{\lambda,\varepsilon} M_{\lambda}(\mu_0^n)] \mathds 1_{\lambda\in[1/2,1]}.
\end{eqnarray*}
Recall that $\gamma = \max\{2\lambda,4\lambda-1\}$. Observe that for $\mu\in \mathcal M^+$, $M_{\alpha}(\mu) \leq M_{0}(\mu) + M_{\beta}(\mu)$ for any $0\leq \alpha \leq \beta$. Elementary computations allow us to get:
\begin{eqnarray*}
\mathbb E\left[d_{\lambda}(\mu_t^n,\mu_t) \right] & \leq & d_{\lambda}(\mu_0^n,\mu_0)  + (1+t)\frac{C_{\lambda,\varepsilon}}{\sqrt{n}} \left( 1 + \left[M_{0}(\mu_0^n + \mu_0)\right]^2 + \left[M_{\gamma+\varepsilon}(\mu_0^n + \mu_0)\right]^2\right)  \nonumber\\
&&\times \exp\left[t\,C_{\lambda,\varepsilon} M_{\lambda}(\mu_0^n + \mu_0)\right] + C_{\lambda,\varepsilon} M_{\lambda}(\mu^n_0 +\mu_0)\int_0^t \mathbb E\left[  d_{\lambda}(\mu^n_s,\mu_s)\right] ds,
\end{eqnarray*}
for some positive constant $C_{\lambda,\varepsilon}$ depending on $\lambda$, $\varepsilon$, $\kappa_0$ and $\kappa_1$. We conclude using the Gronwall lemma that Theorem \ref{theorem} holds under (\ref{case2}).

\section{Special Case}\label{Special_Case}
\setcounter{equation}{0}
Now we are going to study the special case (\ref{special_case}) for which $\lambda\in(0,1]$. We have a better result and a simpler proof than (\ref{case2}).\medskip

In the whole section, we assume that $K$ satisfies (\ref{special_case}) for some fixed $\lambda\in(0,1]$. We fix $\varepsilon > 0$, and we assume that $\mu_0 \in \mathcal M^+_{\lambda} \cap \mathcal M^+_{2\lambda+\varepsilon}$. We denote by $(\mu_t)_{t\geq 0}$ the unique $(\mu_0,K,\lambda)$-weak solution to the Smoluchowski equation. We also consider the $(n,K,\mu_0^n)$-Marcus Lushnikov process, for some given initial condition $\mu_0^n=\frac{1}{n} \sum_{i=1}^N \delta_{x_i}$.\medskip

As we did before we introduce $E_n(t,x) = F^{\mu^ n_t}(x) - F^{\mu_t}(x)$ for $x\in (0,+\infty)$, as defined in (\ref{Intro:MetricEq1}). We observe that $\mathds 1_{(x,+\infty)} \in \mathcal H^e_{\lambda}$, since $\sup_{v>0}v^{-\lambda} |1_{(x,+\infty)}(v)|= x^{-\lambda}<+\infty$. Exactly as in Section \ref{Positive_Case} (see (\ref{Dev:EtxCasPos_2}), take the absolute value and integrate against $x^{\lambda-1}dx$), we obtain:
\begin{equation}\label{Dev:BoundGral_SpC2}
d_{\lambda}(\mu^n_t,\mu_t) \leq d_{\lambda}(\mu^n_0,\mu_0) + C_1(t) + C_2(t) + C_3(t) + C_4(t),
\end{equation}
where
\begin{eqnarray*}
C_1(t) & = &  \dfrac{1}{2} \int_0^t \int_0^{+\infty}\int_0^{+\infty} x^{\lambda-1}\bigg[ \mathds 1_{x>y} K(x-y,y) |E_n(s,x-y)| + |E_n(s,x)| K(x,y) \bigg] dx\\
& & \hspace{9.1cm}  \left( \mu^n_s + \mu_s \right)(dy) \,ds,\\
C_2(t) & = & \dfrac{1}{2}\int_0^t \int_0^{+\infty}\int_0^{+\infty}\int_0^{+\infty} x^{\lambda-1} |\partial_x K(z,y)| \left|\left(A\mathds 1_{(x,+\infty)}\right)(z,y)\right| |E_n(s,z)|\, dz\,dx  \\
& & \hspace{9.1cm}  \left( \mu^n_s + \mu_s \right)(dy) \,ds,\\
C_3(t) & = &\dfrac{1}{2n} \int_0^t \int_0^{+\infty}\int_0^{+\infty} x^{\lambda-1} K(v,v) \left|\mathds 1_{(x,+\infty)}(2v) - 2\, \mathds 1_{(x,+\infty)}(v) \right|dx \,\mu^n_s(dv)\,ds,\\
C_4(t)& = & \int_0^{+\infty} x^{\lambda-1}  \Bigg| \frac{1}{n} \int_0^t \int_{i<j} \int_0^{+\infty} (A\mathds 1_{(x,+\infty)})\left(X^i_{s-},X^j_{s-}\right)\mathds 1_{\left\lbrace z\leq \frac{K\left(X^i_{s-},X^j_{s-}\right)}{n} \right\rbrace } \mathds 1_{\{j\leq N(s-)\}} \\
& & \hspace{9.5cm}\tilde{J}(ds,d(i,j),dz)\Bigg| \, dx.
\end{eqnarray*}
We now study each term separately.\medskip

\noindent
\underline{\textbf{Term $C_1(t)$.}}\medskip

We have, using the change of variable $x \mapsto w + y$, (\ref{special_case}) and using the fact that $x^{\lambda-1}$ is a non-increasing function:
\begin{eqnarray}\label{Dev:Bound_C1}
C_1(t)&\leq & \frac{\kappa_0}{2} \int_0^t \int_0^{+\infty}\int_0^{+\infty} (w+y)^{\lambda-1} (w \wedge y)^{\lambda} |E_n(s,w)|\,dw\left( \mu^n_s + \mu_s \right)(dy)\, ds \nonumber\\
&&+\frac{\kappa_0}{2}\int_0^t \int_0^{+\infty}\int_0^{+\infty}  x^{\lambda-1}  (x \wedge y)^{\lambda} |E_n(s,x)|\,dx  \left( \mu^n_s + \mu_s \right)(dy)\,ds\nonumber  \\
&\leq & \frac{\kappa_0}{2} \int_0^t \int_0^{+\infty}\int_0^{+\infty} w^{\lambda-1} y^{\lambda} |E_n(s,w)|\,dw\left( \mu^n_s + \mu_s \right)(dy)\, ds \nonumber \\
&&+\frac{\kappa_0}{2} \int_0^t \int_0^{+\infty}\int_0^{+\infty}  x^{\lambda-1}  y^{\lambda} |E_n(s,x)|\,dx  \left( \mu^n_s + \mu_s \right)(dy)\,ds \nonumber\\
&\leq& \kappa_0\int_0^t M_{\lambda}(\mu^n_s + \mu_s)\, d_{\lambda}(\mu^n_s,\mu_s) ds. 
\end{eqnarray}

\noindent
\underline{\textbf{Term $C_2(t)$.}}\medskip

Recall (\ref{Bound_int_inf}), use (\ref{special_case}), we have immediately:
\begin{eqnarray}\label{Dev:Bound_C2}
C_2(t) & \leq &\frac{1}{\lambda}\int_0^t \int_0^{+\infty}\int_0^{+\infty}|\partial_x K(z,y)| \left(z \wedge y\right)^{\lambda} |E_n(s,z)|\,\left( \mu^n_s + \mu_s \right)(dy)\, ds \nonumber\\
& \leq & \frac{\kappa_1}{\lambda} \int_0^t \int_0^{+\infty}\int_0^{+\infty} |E_n(s,z)|z^{\lambda-1}y^{\lambda}\left( \mu^n_s + \mu_s \right)(dy)dz\,ds\nonumber\\
& = & \frac{\kappa_1}{\lambda} \int_0^t d_{\lambda}(\mu^n_s,\mu_s)\,M_{\lambda}(\mu^n_s +\mu_s)ds.
\end{eqnarray}

\noindent
\underline{\textbf{Term $C_3(t)$:}}\medskip

As before, recalling (\ref{Dev:Bound_B3}), we write:
 \begin{eqnarray}\label{Dev:Bound_C3}
C_3(t) \leq \frac{2^{2\lambda-1}\kappa_0}{\lambda\,n} \int_0^t M_{2\lambda}(\mu^n_s)\, ds.
 \end{eqnarray}

\noindent
\underline{\textbf{Term $C_4(t)$:}}\medskip

The submartingale term is going to be treated exactly as in the case $\lambda<0$. Using similar arguments as for the term $A_3(t)$, we get
\begin{eqnarray*}
\mathbb E\left[\sup_{s\in[0,t]}C_4(s)\right] &\leq &\frac{4}{\sqrt{n}} \int_0^{+\infty} x^{\lambda-1}\\ 
& & \left\lbrace  \mathbb E \left[ \int_0^t \left\langle \mu^n_s(dv)\mu^n_s(dy)\,,\,K(v,y)\,\left[\left(A\mathds 1_{(x,+\infty)}\right)(v,y)\right]^2   \right\rangle  \,ds \right] \right\rbrace ^{\frac{1}{2}} dx.
\end{eqnarray*}
Using now (\ref{Dev:Bound_IndicCasPos}) and (\ref{special_case}), we deduce that
\begin{eqnarray}\label{Dev:Bound_C4_1}
\mathbb E\left[\sup_{s\in[0,t]} C_4(s) \right] & \leq &\frac{4\sqrt{\kappa_0} }{\sqrt{n}} \int_0^{+\infty} x^{\lambda-1} \bigg\{  \mathbb E \bigg[ \int_0^t \bigg\langle \mu^n_s(dv) \mu^n_s(dy)\,,\,(v\wedge y)^{\lambda}\nonumber\\
& & \hspace{2cm} \big[\mathds 1_{\{x \in(0,v\wedge y)\}} + \,\mathds 1_{\{x \in(v \vee y , v+y)\}} \big]  \bigg\rangle  \,ds \bigg] \bigg\} ^{\frac{1}{2}} dx.
\end{eqnarray}

First assume that $x\leq 1$. Since $\mathds 1_{\{x \in(0,v\wedge y)\}} \leq \dfrac{(v\wedge y)^{\lambda}}{x^{\lambda}}$, since $\mathds 1_{\{x \in(v \vee y , v+y)\}} \leq \dfrac{(v + y)^{\lambda}}{x^{\lambda}}\leq 2^{\lambda} \dfrac{(v \vee y)^{\lambda}}{x^{\lambda}}$, and since $(v\wedge y)^{\lambda} (v\wedge y)^{\lambda} \leq v^{\lambda} y^{\lambda}$  and $(v\wedge y)^{\lambda} (v \vee y)^{\lambda} = v^{\lambda} y^{\lambda} $, we deduce that
\begin{equation*}
\left\langle \mu^n_s(dv) \mu^n_s(dy)\,,(v\wedge y)^{\lambda}\, \big[\mathds 1_{\{x \in(0,v\wedge y)\}} + \,\mathds 1_{\{x \in(v \vee y , v+y)\}} \big]  \right\rangle \leq \frac{\left(1+2^{\lambda}\right)}{x^{\lambda}} [M_{\lambda}(\mu_s^n)]^2. 
\end{equation*}
Thus, 
\begin{eqnarray}\label{Dev:Bound_C4:x_0to1}
\int_0^{1} x^{\lambda-1} \left\lbrace  \mathbb E \left[ \int_0^t \big\langle \mu^n_s(dv) \mu^n_s(dy)\,,\,(v\wedge y)^{\lambda}\, \big[\mathds 1_{\{x \in(0,v\wedge y)\}} + \,\mathds 1_{\{x \in(v \vee y , v+y)\}} \big]   \big\rangle \,ds \right] \right\rbrace ^{\frac{1}{2}} dx\nonumber\\
 \leq \,\, \sqrt{1+2^{\lambda}} \int_0^{1} x^{\frac{\lambda}{2}-1} dx\times \left\lbrace  \mathbb E \left[ \int_0^t [M_{\lambda}(\mu_s^n)]^2 \,ds \right] \right\rbrace ^{\frac{1}{2}} \hspace{1.4cm} \\
= \,\, \frac{2\sqrt{1+2^{\lambda}}}{\lambda} \left\lbrace  \mathbb E \left[ \int_0^t [M_{\lambda}(\mu_s^n)]^2 \,ds \right] \right\rbrace ^{\frac{1}{2}}. \hspace{3.3cm}\nonumber
\end{eqnarray}

Next consider $x> 1$. Since $\mathds 1_{\{x \in(0,v\wedge y)\}} \leq \dfrac{(v\wedge y)^{2\lambda+\varepsilon}}{x^{2\lambda+\varepsilon}}$, and $\mathds 1_{\{x \in(v \vee y , v+y)\}} \leq \dfrac{(v + y)^{2\lambda+\varepsilon}}{x^{2\lambda+\varepsilon}}\leq 2^{2\lambda+\varepsilon} \dfrac{(v \vee y)^{2\lambda+\varepsilon}}{x^{2\lambda+\varepsilon}}$, and since $(v\wedge y)^{\lambda} (v\wedge y)^{2\lambda+\varepsilon} \leq v^{\lambda} y^{2\lambda+\varepsilon}$  and
$(v\wedge y)^{\lambda} (v \vee y)^{2\lambda+\varepsilon} \leq v^{\lambda} y^{2\lambda+\varepsilon} + v^{2\lambda+\varepsilon} y^{\lambda}$, and using the symmetry, we deduce that
\begin{eqnarray*}
\left\langle \mu^n_s(dv) \mu^n_s(dy)\,,(v\wedge y)^{\lambda}\, \big[\mathds 1_{\{x \in(0,v\wedge y)\}} + \,\mathds 1_{\{x \in(v \vee y , v+y)\}} \big]  \right\rangle
 & \leq & \frac{\left(1+2^{2\lambda+\varepsilon+1}\right)}{x^{2\lambda+\varepsilon}} M_{\lambda}(\mu_s^n)M_{2\lambda+\varepsilon}(\mu_s^n).  
\end{eqnarray*}
Thus,
\begin{eqnarray}\label{Dev:Bound_C4:x_1toinf}
\int_1^{+\infty} x^{\lambda-1} \left\lbrace  \mathbb E \left[ \int_0^t \big\langle \mu^n_s(dv) \mu^n_s(dy)\,,\,(v\wedge y)^{\lambda}\, \big[\mathds 1_{\{x \in(0,v\wedge y)\}} + \,\mathds 1_{\{x \in(v \vee y , v+y)\}} \big]   \big\rangle \,ds \right] \right\rbrace ^{\frac{1}{2}} dx \nonumber\\
\leq\,  \sqrt{1+2^{2\lambda+\varepsilon+1}} \int_1^{+\infty} x^{-\frac{\varepsilon}{2}-1} dx\times \left\lbrace  \mathbb E \left[ \int_0^t M_{\lambda}(\mu_s^n)M_{2\lambda+\varepsilon}(\mu_s^n) \,ds \right] \right\rbrace ^{\frac{1}{2}} \hspace{5mm}\\
= \, \frac{2\sqrt{1+2^{2\lambda+\varepsilon+1}}}{\varepsilon} \left\lbrace  \mathbb E \left[ \int_0^t  M_{\lambda}(\mu_s^n)M_{2\lambda+\varepsilon}(\mu_s^n) \,ds \right] \right\rbrace ^{\frac{1}{2}}. \nonumber \hspace{3cm}
\end{eqnarray}

Gathering (\ref{Dev:Bound_C4_1}), (\ref{Dev:Bound_C4:x_0to1}) and (\ref{Dev:Bound_C4:x_1toinf}), we obtain:
\begin{eqnarray}\label{Dev:Bound_C4}
\mathbb E\left[\sup_{s\in[0,t]} C_4(s) \right] & \leq &  \frac{8\sqrt{\kappa_0} }{\sqrt{n}} \Bigg\{  \frac{\sqrt{1+2^{\lambda}}}{\lambda} \left( \mathbb E \left[ \int_0^t [M_{\lambda}(\mu_s^n)]^2 \,ds \right] \right) ^{\frac{1}{2}}\nonumber\\
& & \hspace{1.2cm}+\,\frac{\sqrt{1+2^{2\lambda+\varepsilon+1}}}{\varepsilon} \left(  \mathbb E \left[ \int_0^t  M_{\lambda}(\mu_s^n)M_{2\lambda+\varepsilon}(\mu_s^n) \,ds \right] \right) ^{\frac{1}{2}} \Bigg\}. 
\end{eqnarray}

\noindent
\underline{\textbf{Conclusion.}}\medskip

Therefore, gathering (\ref{Dev:Bound_C1}), (\ref{Dev:Bound_C2}),  (\ref{Dev:Bound_C3}) and  (\ref{Dev:Bound_C4}), we obtain:
\begin{eqnarray*}\label{Dev:pre-GronwallSpC}
\mathbb E\left[\sup_{s\in[0,t]} d_{\lambda}(\mu^n_s,\mu_s)  \right] & \leq & d_{\lambda}(\mu^n_0,\mu_0)  + \left(\kappa_0 + \frac{\kappa_1}{\lambda}\right)\int_0^t \mathbb E\left[ d_{\lambda}(\mu^n_s,\mu_s)\, M_{\lambda}(\mu^n_s + \mu_s)\right] ds \nonumber\\
& & +\, \frac{2^{2\lambda}\kappa_0}{n\,\lambda} \int_0^t \mathbb E\left[ M_{2\lambda}(\mu^n_s)\right] \, ds \\
& & +\,\frac{8\sqrt{\kappa_0} }{\sqrt{n}} \Bigg\{  \frac{\sqrt{1+2^{\lambda}}}{\lambda} \left( \mathbb E \left[ \int_0^t [M_{\lambda}(\mu_s^n)]^2 \,ds \right] \right) ^{\frac{1}{2}} \nonumber\\
& & \hspace{0.7cm}+\,\frac{\sqrt{1+2^{2\lambda+\varepsilon+1}}}{\varepsilon} \left(  \mathbb E \left[ \int_0^t M_{\lambda}(\mu_s^n)M_{2\lambda+\varepsilon}(\mu_s^n) \,ds \right] \right) ^{\frac{1}{2}} \Bigg\}. \nonumber
\end{eqnarray*} 
Observe that $M_{\alpha}(\mu_0^n) \leq M_{0}(\mu_0^n) + M_{2\lambda + \varepsilon}(\mu_0^n)$ for $\alpha = \lambda,\,2\lambda$. Proposition \ref{Prop:bounded_moments} implies that for $\alpha \in (0,1]$,  $M_{\alpha}(\mu^n_t+\mu_t) \leq M_{\alpha}(\mu^n_0+\mu_0)$ a.s. and  for $\alpha = 2\lambda,\,2\lambda+\varepsilon$, $\mathbb E \left[ M_{\alpha}(\mu_s^n)\right] \leq M_{\alpha}(\mu_0^n) \exp[s\,C_{\lambda,\alpha} M_{\lambda}(\mu_0^n)]$ where $C_{\lambda,\alpha}$ is a positive constant depending on $\lambda$, $\alpha$, $\kappa_0$ and $\kappa_1$. Since $\mu^n_0$ is deterministic, we deduce that
\begin{eqnarray*}
\mathbb E\left[\sup_{s\in[0,t]} d_{\lambda}(\mu^n_s,\mu_s)  \right]  & \leq & d_{\lambda}(\mu^n_0,\mu_0) + (1+t)\frac{C_{\lambda,\varepsilon}}{\sqrt{n}} \left( M_{0}(\mu^n_0) + M_{2\lambda+\varepsilon} (\mu^n_0)\right) \exp[t\,C_{\lambda,\varepsilon}M_{\lambda}(\mu_0^n)] \nonumber\\
& & + C_{\lambda,\varepsilon} \, M_{\lambda}(\mu^n_0+\mu_0) \int_0^t \mathbb E\left[ d_{\lambda}(\mu^n_s,\mu_s)\, \right] ds, \nonumber
\end{eqnarray*}
for some positive constant $C_{\lambda,\varepsilon}$  depending on $\lambda$, $\varepsilon$, $\kappa_0$ and $\kappa_1$. We conclude using the Gronwall lemma.

\section{Choice of the initial condition}\label{Choice_Mu0}
\setcounter{equation}{0}

The aim of this section is to prove Proposition \ref{Results:Prop}. We thus fix $\lambda\in(-\infty,1] \setminus \{0\}$ and $\mu_0\in \mathcal M^+_{\lambda} \cap M^+_{2\lambda}$. We first treat the case where $\mu_0$ is atomless, next the case where $\mu_0$ is discrete. 

\subsection{Continuum System}

We assume that $\mu_0$ is atomless. For $0<a<A<+\infty$, we consider $\mu_0|_{K}$, the restriction of $\mu_0$ to $K = [a,A]$. We consider also $N$ points $a = x_0 < x_1 <\cdots <x_N \leq A$  such that:
\begin{equation}\label{Choix:xi}
\mu_0\left([x_{i-1},\,x_i)\right)  =  \dfrac{1}{n},\,\,\,\, \forall\,\, i =1,\,\cdots, N \hspace{1cm}\textrm{ and } \hspace{1cm}
\mu_0\left([x_{N},\,A]\right)  <  \dfrac{1}{n}.
\end{equation}
We will use the points $\{x_i\}_{i=1,\cdots,\,N}$ to construct the discrete measure $\mu_0^n$ choosing $a$ and $A$ following the value of $\lambda$ as a function of $n$.

\subsubsection{Case $\lambda \in(-\infty,0)$:}
First, we choose $a_n<A_n$ as follows:
\begin{equation}\label{Choix:aA_CS_CasNeg}
a_n  =  \left(\dfrac{1}{\sqrt{n}}\right)^{\frac{1}{|\lambda |}} \hspace{1cm}\textrm{ and } \hspace{1cm}
\int_{A_n}^{+\infty} x^{\lambda} \mu_0(dx) \leq  \dfrac{1}{\sqrt{n}}. 
\end{equation}
Next, we assign the weight $\mu_0\left([x_{i-1},\,x_i)\right) = \frac{1}{n}$ to the point $x_i$ and we set
\begin{equation}\label{Choix:mu_n_CS_CasNeg}
\mu_0^n = \dfrac{1}{n} \sum_{i=1}^{N_n} \delta_{x_i}.
\end{equation}
If $\alpha \leq 0 $, we get:
\begin{eqnarray} \label{moment_CS_neg}
M_{\alpha}(\mu_0^n)& = & \frac{1}{n} \sum_{i=1}^{N_n} x_i^{\alpha} =  \sum_{i=1}^{N_n}  x_i^{\alpha}\,\mu_0([x_{i-1},\,x_i)) = \sum_{i=1}^{N_n} \int_0^{+\infty}  x_i^{\alpha}\,\mathds 1_{[x_{i-1},\,x_i)}(x) \mu_0(dx) \nonumber \\
& \leq & \sum_{i=1}^{N_n} \int_0^{+\infty} x^{\alpha}\, \mathds 1_{[x_{i-1},\,x_i)}(x)  \mu_0(dx) = \int_{a_n}^{x_{N_n}} x^{\alpha} \mu_0(dx)  \leq M_{\alpha}(\mu_0). 
\end{eqnarray}
For the distance, we have, with $K_n = [a_n,A_n]$:
\begin{eqnarray}\label{Choix:Bound_d1_CS_CasNeg}
d_ {\lambda}(\mu_{0}|_{K_n},\mu_0) & = & \int_0^{+\infty}x^{\lambda-1} \left| \int_0^{+\infty} \mathds 1_{(0,x)}(y) \left( \mu_{0}|_{K_n} - \mu_0\right)(dy) \right| dx \nonumber \\
& = &  \int_0^{+\infty}x^{\lambda-1} \left[\mu_0\left((0,x)\right) \mathds 1_{x<a_n} + \mu_0\left((A_n,x)\right)  \mathds 1_{x>A_n} + \mu_0\left((0,a_n)\right) \mathds 1_{x>a_n} \right] dx\nonumber \\
& = &   \int_0^{a_n} \int_y^{a_n}  x^{\lambda-1}\,dx\,\mu_0(dy) + \int_{A_n}^{+\infty}\int_y^{+\infty}x^{\lambda-1}\,dx\,\mu_0(dy)\nonumber  \\
& & + \int_0^{a_n}\int_{a_n}^{+\infty}x^{\lambda-1}\,dx\,\mu_0(dy)\nonumber\\
& \leq &  2 \int_0^{a_n} \int_y^{+\infty}  x^{\lambda-1}\,dx\,\mu_0(dy) + \int_{A_n}^{+\infty}\int_y^{+\infty}x^{\lambda-1}\,dx\,\mu_0(dy)\nonumber  \\
& \leq &\frac{2a_n^{|\lambda |}}{|\lambda |}  \int_0^{+\infty}y^{2\lambda} \mu_0(dy) + \frac{1}{|\lambda |}\int_{A_n}^{+\infty} y^{\lambda} \mu_0(dy) \leq  \frac{1}{|\lambda |\,\sqrt{n}} \left(2 M_{2\lambda}(\mu_0) + 1 \right), 
\end{eqnarray}
we used (\ref{Choix:aA_CS_CasNeg}) for the last inequality. Next, we introduce the notation $i_x = \max\{i : x_i \leq x;\,i=0,\cdots,\,N_n \}$ for $x>a_n$. We remark that $\mu_0^n\left((0,x]\right) =0$ if $x \leq a_n$ and $\mu_0^n\left((0,x]\right) = \mu_0\left((a_n,x_{i_x}]\right)$ if $x > a_n$. Hence,
\begin{eqnarray} \label{Choix:Bound_d2_CS_CasNeg}
d_ {\lambda}(\mu_0^n,\mu_{0}|_{K_n}) & = & \int_0^{+\infty}x^{\lambda-1} \left| \int_0^{+\infty} \mathds 1_{(0,x)}(y) \left( \mu_0^n -\mu_{0}|_{K_n}\right)(dy) \right| dx  \nonumber\\
& = &   \int_{a_n}^{A_n}x^{\lambda-1} \left|\mu_0([a_n,x_{i_x}))-\mu_0([a_n,x))\right|dx  \nonumber\\
& &+ \int_{A_n}^{+\infty}x^{\lambda-1} \left|\mu_0([a_n,x_{i_x}))-\mu_0([a_n,A_n))\right|dx\nonumber\\
& \leq &  \int_{a_n}^{A_n}x^{\lambda-1} \mu_0((x_{i_x},x))\,dx + \int_{A_n}^{+\infty} x^{\lambda-1} \mu_0([x_{N_n},A_n))\,dx\nonumber\\
& \leq & \frac{2}{n} \int_{a_n}^{+\infty} x^{\lambda-1}  dx = \frac{2 }{|\lambda | n}a_n^{\lambda} \leq \frac{2}{|\lambda | \sqrt{n}}\,.
\end{eqnarray}
We used $\left|\mu_0([a_n,x_{i_x}))-\mu_0([a_n,x))\right| = \mu_0((x_{i_x},x)) \leq \mu_0([x_{j-1},x_j))\leq \frac{1}{n}$ for some $j = 1,\cdots,\,N$, and (\ref{Choix:aA_CS_CasNeg}). Finally, from (\ref{Choix:Bound_d1_CS_CasNeg}) and (\ref{Choix:Bound_d2_CS_CasNeg}), we obtain:
\begin{equation*}
d_ {\lambda}(\mu_0^n,\mu_0) \leq d_ {\lambda}(\mu_0^n,\mu_{0}|_{K_n}) + d_ {\lambda}(\mu_{0}|_{K_n},\mu_0) \leq \frac{1}{ |\lambda|\,\sqrt{n}} \left( 2M_{2\lambda}(\mu_0) + 3\right).
\end{equation*}

\subsubsection{Case $\lambda \in(0,1]$:} 
First, we choose $a_n<A_n$ as follows:
\begin{equation}\label{Choix:aA_CS_CasPos}
\int_0^{a_n} x^{\lambda} \mu_0(dx)  \leq  \dfrac{1}{\sqrt{n}} \hspace{1cm}\textrm{ and } \hspace{1cm}
 A_n   =  \left(\sqrt{n}\right)^{\frac{1}{\lambda}}. 
\end{equation}
Next, we assign the weight $\mu_0([x_{i-1},x_i)) = \frac{1}{n}$ to the point $x_{i-1}$, recall that $x_0 = a_n$. We set
\begin{equation}\label{Choix:mu_n_CS_CasPos}
\mu^n_0(dx) = \dfrac{1}{n} \sum_{i=0}^{N_n-1} \delta_{x_i}.
\end{equation}
\noindent
If $\alpha \geq 0$, we get:
\begin{eqnarray}\label{moment_CS_pos}
M_{\alpha}(\mu_0^n)& = & \frac{1}{n} \sum_{i=0}^{N_n-1} x_i^{\alpha} =  \sum_{i=1}^{N_n}  x_{i-1}^{\alpha}\,\mu_0([x_{i-1},\,x_i)) =  \sum_{i=1}^{N_n} \int_0^{+\infty}  x_{i-1}^{\alpha}\,\mathds 1_{[x_{i-1},\,x_i)}(x) \mu_0(dx) \nonumber \\
& \leq & \sum_{i=1}^{N_n} \int_0^{+\infty} x^{\alpha}\, \mathds 1_{[x_{i-1},\,x_i)}(x)  \mu_0(dx) = \int_{a_n}^{x_{N_n}} x^{\alpha} \mu_0(dx) \leq M_{\alpha}(\mu_0). 
\end{eqnarray}
For the distance, we have, with $K_n = [a_n,A_n]$:
\begin{eqnarray}\label{Choix:Bound_d1_CS_CasPos}
d_ {\lambda}(\mu_{0}|_{K_n},\mu_0) & = & \int_0^{+\infty}x^{\lambda-1} \left| \int_0^{+\infty} \mathds 1_{[x,+\infty)}(y) \left( \mu_{0}|_{K_n} - \mu_0\right)(dy) \right| dx \nonumber \\
& = &  \int_0^{+\infty}x^{\lambda-1} \left[\mu_0\left([x,a_n)\right) \mathds 1_{x<a_n} + \mu_0\left([x,+\infty)\right)  \mathds 1_{x>A_n} + \mu_0\left([A_n,+\infty)\right) \mathds 1_{x<A_n} \right] dx\nonumber \\
& = &   \int_0^{a_n} \int_0^{y}  x^{\lambda-1}\,dx\,\mu_0(dy) + \int_{A_n}^{+\infty}\int_{A_n}^{y}x^{\lambda-1}\,dx\,\mu_0(dy)\nonumber  \\
& & + \int_{A_n}^{+\infty}\int_{0}^{A_n}x^{\lambda-1}\,dx\,\mu_0(dy)\nonumber\\
& \leq &   \int_0^{a_n} \int_0^{y}  x^{\lambda-1}\,dx\,\mu_0(dy) + 2\int_{A_n}^{+\infty}\int_0^{y}x^{\lambda-1}\,dx\,\mu_0(dy)\nonumber  \\
& \leq &  \frac{1}{\lambda} \int_0^{a_n} x^{\lambda} \mu_0(dx) +  \frac{2A_n^{-\lambda}}{\lambda}\int_0^{+\infty} y^{2\lambda} \mu_0(dy)\leq \frac{1}{\lambda\,\sqrt{n}} \left(1 + 2M_{2\lambda}(\mu_0)\right),
\end{eqnarray}
we used (\ref{Choix:aA_CS_CasPos}) for the last inequality. Next, using the notation $i_x = \min\{i : x_i \geq x;\,i=0,\cdots,\,N-1 \}$ for $x>a_n$,  we remark that $\mu_0^n\left([x,+\infty)\right) = 0$ if $x \geq A_n$ and $\mu_0^n\left([x,+\infty)\right) = \mu_0\left([x_{i_x},A_n)\right)$ if $x < A_n$. Hence,
\begin{eqnarray} \label{Choix:Bound_d2_CS_CasPos}
d_ {\lambda}(\mu_0^n,\mu_{0}|_{K_n}) & = & \int_0^{+\infty}x^{\lambda-1} \left| \int_0^{+\infty} \mathds 1_{(x,+\infty)}(y) \left( \mu_0^n -\mu_{0}|_{K_n}\right)(dy) \right| dx  \nonumber\\
& = &   \int_{a_n}^{A_n}x^{\lambda-1} \left|\mu_0((x_{i_x},A_n))-\mu_0((x,A_n))\right|dx \nonumber\\
& & + \int_0^{a_n} x^{\lambda-1} \left|\mu_0([x_{i_x},A_n))-\mu_0((a_n,A_n))\right|dx\nonumber\\
& = &  \int_{a_n}^{A_n}x^{\lambda-1} \mu_0((x,x_{i_x}))dx \,\leq \, \frac{1}{n} \int_{0}^{A_n} x^{\lambda-1}  dx = \frac{1}{\lambda\, n} A_n^{\lambda } \leq \frac{1}{\lambda \,\sqrt{n}},
\end{eqnarray}
we used $ \left|\mu_0((x_{i_x},A_n))-\mu_0((x,A_n))\right| = \mu_0((x,x_{i_x})) \leq \mu_0([x_{j-1},x_j))$ for some $j = 1,\cdots,\,N$, and (\ref{Choix:aA_CS_CasPos}). Finally, from (\ref{Choix:Bound_d1_CS_CasPos}) and (\ref{Choix:Bound_d2_CS_CasPos}), we deduce:
\begin{eqnarray*}
d_ {\lambda}(\mu_0^n,\mu_0)  & \leq & \frac{2}{\lambda\, \sqrt{n}} \left(M_{2\lambda}(\mu_0) + 1\right). 
\end{eqnarray*}

\subsection{Discrete System}

Let us thus, consider $\mu_0 \in \mathcal M^+$ with support in $\mathbb N$, i.e.
\begin{equation}
\mu_0 = \sum_{k\geq 1} \alpha_k\, \delta_{k}, \hspace{5mm} \textrm{with } \alpha_k \in \mathbb R_+.
\end{equation}
We set for $A\in\mathbb N$:
\begin{equation}\label{Choix:muA_DS}
\mu_0^A = \sum_{k = 1}^A \alpha_k\, \delta_{k}.
\end{equation}

\subsubsection{Case $\lambda \in(-\infty, 0)$:}
We choose $A_n$ such that:
\begin{equation}\label{Choix:An_DS_CasNeg}
\sum_{k > A_n} \alpha_k k^{\lambda} \leq \frac{1}{\sqrt{n}}, 
\end{equation}
and we set,
\begin{equation}\label{Choix:mun_DS}
\mu_{0}^{n} = \frac{1}{n}\sum_{k = 1}^{A_n} \alpha_k^n\, \delta_{k}, \hspace{5mm} \textrm{with}
\end{equation}
\begin{equation}\label{Choix:alphak_DS_CasNeg}
\left\{
\begin{array}{lcl}
\alpha_1^n & = & \lfloor n \alpha_1 \rfloor, \\
\alpha_k^n & = & \lfloor n (\alpha_1 + \cdots + \alpha_k )\rfloor -  \lfloor n (\alpha_1 + \cdots + \alpha_{k-1} )\rfloor \textrm{ for } k = 2,\cdots,\,A_n, 
\end{array}\right.
\end{equation}
where $\lfloor \cdot \rfloor$ is the floor function. Remark that chosen in this way, the $\alpha_k^n$ are non-negative integers and $\mu_0^n$ can be written as $\frac{1}{n} \sum_{i=1}^{N_n}\delta_{x_i}$, hence $\mu_0^n$ is the measure we search. Observe that for $k = 1,\cdots,\, A_n$, we have 
\begin{eqnarray}\label{Choix:condalpha_DS_CasNeg} 
\left|\sum_{i=1}^k \left(\frac{1}{n}\alpha_i^n - \alpha_i\right)\right| & = & \left| \frac{1}{n} \left(\alpha_1^n+\cdots+\alpha_k^n \right) - \left(\alpha_1+\cdots+\alpha_k \right) \right| \nonumber\\
& = & \left| \frac{1}{n} \left\lfloor n \left(\alpha_1+\cdots+\alpha_k \right) \right\rfloor - \left(\alpha_1+\cdots+\alpha_k \right) \right|   \,\leq \,\frac{1}{n}.\end{eqnarray}
If $\alpha \leq 0$, we have:
\begin{eqnarray}\label{moment_DS_neg}
M_{\alpha}\left(\mu_{0}^{n}\right)& = & \frac{1}{n} \sum_{k=1}^{A_n} \alpha_k^n k^{\alpha}  \nonumber\\
& = & \frac{1}{n}\lfloor n \alpha_1 \rfloor + \frac{1}{n} \sum_{k=2}^{A_n}  \lfloor n (\alpha_1 + \cdots + \alpha_k )\rfloor k^{\alpha} - \frac{1}{n} \sum_{k=2}^{A_n}  \lfloor n (\alpha_1 + \cdots + \alpha_{k-1})\rfloor k^{\alpha}  \nonumber\\
& = & \frac{1}{n}\lfloor n \alpha_1 \rfloor + \frac{1}{n} \sum_{k=2}^{A_n}  \lfloor n (\alpha_1 + \cdots + \alpha_k )\rfloor k^{\alpha} - \frac{1}{n} \sum_{k=1}^{A_n-1}  \lfloor n (\alpha_1 + \cdots + \alpha_{k})\rfloor (k+1)^{\alpha}  \nonumber\\
& = & \frac{1}{n} \left( \lfloor n \alpha_1 \rfloor + A_n^{\alpha} \lfloor n (\alpha_1 + \cdots + \alpha_{A_n})\rfloor - 2^{\alpha}\lfloor n \alpha_1 \rfloor  \right) \nonumber\\
&& + \frac{1}{n} \sum_{k=2}^{A_n-1}  \lfloor n (\alpha_1 + \cdots + \alpha_{k})\rfloor \left(k^{\alpha} - (k+1)^{\alpha} \right)  \nonumber\\
& \leq & \alpha_1 \left(1 - 2^{\alpha}\right) + A_n^{\alpha} \left(\alpha_1 + \cdots + \alpha_{A_n}\right) + \sum_{k=2}^{A_n-1}  (\alpha_1 + \cdots + \alpha_{k})  \left(k^{\alpha} - (k+1)^{\alpha} \right)  \nonumber\\
& = & \sum_{k=1}^{A_n-1} \alpha_{k} \left[A_n^{\alpha} + \sum_{j=k}^{A_n-1}\left(j^{\alpha} - (j+1)^{\alpha} \right) \right] + A_n^{\alpha} \alpha_{A_n} = \sum_{k=1}^{A_n} \alpha_{k} k^{\alpha} \leq M_{\alpha}(\mu_0).
\end{eqnarray}
Next, for the distance, we have:
\begin{eqnarray} \label{Choix:d1_DS_CasNeg}  
d_{\lambda}\left(\mu_{0}^{A_n},\mu_0\right)  & \leq & \int_0^{+\infty}x^{\lambda-1} \left| \int_0^{+\infty} \mathds 1_{(0,x)}(y)\left(\mu_{0}^{A_n} - \mu_0\right) (dy) \right| dx \nonumber \\
& = &  \int_0^{+\infty}x^{\lambda-1} \int_0^{x} \sum_{k>A_n} \alpha_k\,\delta_k(dy) dx \, = \,\sum_{k>A_n} \alpha_k \int_{k}^{+\infty} x^{\lambda-1}dx  \nonumber\\
& = & \frac{1}{|\lambda |} \sum_{k> A_n} \alpha_k\,k^{\lambda}\, \leq\, \frac{1}{|\lambda | \sqrt{n}},
\end{eqnarray}
we used (\ref{Choix:An_DS_CasNeg}). Next,
\begin{eqnarray} \label{Choix:Bound_d2_DS_CasNeg}
d_ {\lambda}\left(\mu_{0}^{n} , \mu_0^{A_n} \right)  & = & \int_0^{+\infty}x^{\lambda-1} \left| \int_0^{+\infty} \mathds 1_{(0,x)}(y)\left(\mu_{0}^{n} - \mu_0^{A_n} \right) (dy) \right| dx \nonumber \\
& = &  \sum_{k=1}^{A_n-1}\int_k^{k+1}x^{\lambda-1} \left|\sum_{i=1}^k \left(\frac{1}{n}\alpha_i^n - \alpha_i\right)\right| dx+ \int_{A_n}^{+\infty}x^{\lambda-1} \left|\sum_{i=1}^{A_n} \left(\frac{1}{n}\alpha_i^n - \alpha_i\right)\right| dx\nonumber\\
 & \leq & \frac{2}{n}\int_1^{+\infty}x^{\lambda-1} dx \leq \frac{2}{|\lambda | n},
\end{eqnarray}
we used (\ref{Choix:condalpha_DS_CasNeg}) for the last inequality. Finally, from (\ref{Choix:d1_DS_CasNeg}) and (\ref{Choix:Bound_d2_DS_CasNeg}), we have:
\begin{equation*}
d_ {\lambda}\left(\mu_{0}^{n},\mu_0\right)  \leq d_ {\lambda}\left(\mu_{0}^{n},\mu_0^{A_n}\right) + d_ {\lambda}\left(\mu_0^{A_n},\mu_0\right) \leq \frac{1}{|\lambda |\, \sqrt{n}} \left(1+\frac{2}{\sqrt{n}}\right).
\end{equation*}

\subsubsection{Case $\lambda \in(0,1]$:}
We set
\begin{equation}\label{Choix:An_DS_CasPos1}
A_n  =  \left\lfloor \left(\sqrt{n}\,\right)^{\frac{1}{\lambda}}\right\rfloor + 1,
\end{equation}
Note that chosen in this way, we have $A_n^{-\lambda} \leq \frac{1}{\sqrt{n}}$, implying
\begin{equation}\label{Choix:An_DS_CasPos2}
\sum_{k\geq A_n} \alpha_k\,k^{\lambda} \leq A_n^{-\lambda}\sum_{k\geq A_n} \alpha_k\,k^{2\lambda} \leq \frac{1}{\sqrt{n}} M_{2\lambda}(\mu_0).
\end{equation}
We set the measure $\mu_{0}^{n}$ as defined in (\ref{Choix:mun_DS}), with
\begin{equation}\label{Choix:alphak_DS_CasPos}
\alpha_k^n = \left\lfloor n \sum_{i\geq k} \alpha_i \right\rfloor - \left\lfloor n \sum_{i\geq k+1} \alpha_i \right\rfloor, \,\,\,\,\, \textrm{ for } k=1,\cdots,\, A_n.
\end{equation}
Observe that, since $\sum_{k\geq 1} \alpha_k = M_{0}(\mu_0)  \leq M_{\lambda }(\mu_0) = \sum_{k\geq 1} \alpha_k k^{\lambda } < +\infty$, the weights $\{\alpha_k^n\}_{k \geq 1}$ are well-defined. Remark that chosen in this way, the $\alpha_k^n$ are non-negative integers and $\mu_0^n$ can be written as $\frac{1}{n}\sum_{i=1}^{N_n}\delta_{x_i}$, hence $\mu_0^n$ is the measure we search. \medskip

\noindent
For $1\leq j\leq A_n$, we have:
\begin{eqnarray}\label{Choix:condalpha_DS_CasPos} 
\left|\sum_{k=j}^{A_n} \left(\frac{1}{n}\alpha_k^n - \alpha_k\right)\right| & = &  \left| \frac{1}{n}  \left\lfloor n \sum_{i\geq j} \alpha_i \right\rfloor - \frac{1}{n} \left\lfloor n \sum_{i\geq A_n+1} \alpha_i \right\rfloor - \sum_{k = j}^{A_n} \alpha_k \right|\nonumber \\
& \leq &  \left| \frac{1}{n}  \left\lfloor n \sum_{i = j}^{A_n} \alpha_i \right\rfloor +  \frac{1}{n} - \sum_{k = j}^{A_n} \alpha_k \right| \nonumber\\
& \leq & \frac{1}{n}  +  \left| \frac{1}{n}  \left\lfloor n \sum_{i = j}^{A_n} \alpha_i \right\rfloor - \sum_{k = j}^{A_n} \alpha_k \right|  \leq  \frac{2}{n}.
\end{eqnarray}

\noindent
If $\alpha\geq 0$, we have:
\begin{eqnarray}\label{moment_DS_pos}
M_{\alpha}(\mu_{0}^{n})& = & \frac{1}{n} \sum_{k=1}^{A_n} \alpha_k^n k^{\alpha}  = \frac{1}{n} \sum_{k=1}^{A_n}  \left\lfloor n \sum_{i\geq k} \alpha_i \right\rfloor k^{\alpha} - \frac{1}{n} \sum_{k=1}^{A_n}  \left\lfloor n \sum_{i\geq k+1} \alpha_i \right\rfloor k^{\alpha} \nonumber \\
& = & \frac{1}{n}\sum_{k=2}^{A_n} \left\lfloor n \sum_{i \geq k }   \alpha_i \right\rfloor \left[k^{\alpha} - (k-1)^{\alpha} \right] +  \frac{1}{n}  \left\lfloor n \sum_{i \geq 1 } \alpha_i \right\rfloor - \frac{A_n^{\alpha}}{n}  \left\lfloor n \sum_{i \geq A_n+1} \alpha_i \right\rfloor \nonumber \\
 & \leq & \sum_{k\geq 1} \left( \sum_{i \geq k }   \alpha_i \right) \left[k^{\alpha} - (k-1)^{\alpha} \right] \, = \,  \sum_{i \geq 1 } \alpha_i \sum_{k = 1}^i \left[k^{\alpha} - (k-1)^{\alpha} \right]\, = \, M_{\alpha}(\mu_0),
\end{eqnarray} 
For the distance, we have :
\begin{eqnarray} \label{Choix:d1_DS_CasPos}
d_{\lambda}\left(\mu_{0}^{A_n} , \mu_0\right)  & = & \int_0^{+\infty}x^{\lambda-1} \left| \int_0^{+\infty} \mathds 1_{[x,+\infty)}(y) \left(\mu_{0}^{A_n} - \mu_0\right) (dy) \right| dx \nonumber \\
& = &  \int_0^{+\infty}x^{\lambda-1} \int_x^{+\infty} \sum_{k>A_n} \alpha_k\,\delta_k(dy) dx \, = \,\sum_{k>A_n} \alpha_k \int_0^{k} x^{\lambda-1}dx  \nonumber\\  
& = & \frac{1}{\lambda } \sum_{k> A_n} \alpha_k\,k^{\lambda}\, \leq\, \frac{1}{\lambda \sqrt{n}}M_{2\lambda}(\mu_0),
\end{eqnarray}
we used (\ref{Choix:An_DS_CasPos2}). Next,
\begin{eqnarray} \label{Choix:Bound_d2_DS_CasPos}
d_ {\lambda}\left(\mu_{0}^{n} , \mu_0^{A_n} \right) & = & \int_0^{+\infty}x^{\lambda-1} \left| \int_0^{+\infty} \mathds 1_{[x,+\infty)}(y) \left(\mu_{0}^{n} - \mu_0^{A_n} \right) (dy) \right| dx\nonumber  \\
& = &  \sum_{j=1}^{A_n}\int_{j-1}^j x^{\lambda-1} \left|\sum_{k=j}^{A_n} \left(\frac{1}{n}\alpha_k^n - \alpha_k\right)\right| dx\nonumber\\
& \leq & \frac{2}{n}\int_0^{A_n}x^{\lambda-1} dx\, \leq \, \frac{2A_n^{\lambda}}{\lambda\, n}\, \leq \, \frac{4}{\lambda\, \sqrt{n}},
\end{eqnarray}
we used (\ref{Choix:condalpha_DS_CasPos}) and (\ref{Choix:An_DS_CasPos1}). Finally, from (\ref{Choix:d1_DS_CasPos}) and (\ref{Choix:Bound_d2_DS_CasPos}), we obtain:
\begin{eqnarray*}
d_ {\lambda}(\mu_{0}^{n},\mu_0)  & \leq & d_ {\lambda}(\mu_{0}^{n},\mu_0^{A_n}) + d_ {\lambda}(\mu_0^{A_n},\mu_0)\, \leq \,  \frac{1}{\lambda \, \sqrt{n}} \left(M_{2\lambda}(\mu_0)+4\right).
\end{eqnarray*}

\subsection{Conclusion}
In any case, ($\lambda \in (-\infty,1] \setminus \{0\}$ and $\mu_0$ either atomless or with support in $\mathbb N$), we have built a measure of the form $\mu_0^n = \frac{1}{n}\sum_{i=1}^{N_n} \delta_{x_i}$ satisfying the desired conditions on the moments and distance. It is straightforward to show that $N_n = n\,\left\langle\mu_0^n(dx), 1 \right\rangle$. Hence, according to (\ref{moment_CS_neg}), (\ref{moment_CS_pos}), (\ref{moment_DS_neg}) and (\ref{moment_DS_pos}), we deduce,
\begin{equation}
N_n = n M_0(\mu_0^n) \leq n M_0(\mu_0).
\end{equation}
This concludes the proof of Proposition \ref{Results:Prop}.

\begin{appendix} 
\section{}\label{Preliminaries}
\setcounter{equation}{0}

This section is devoted to some technical issues.

\begin{lemma}\label{Intro:lemmaWPdnss}
Consider $\lambda\in(-\infty,1]\setminus\{0\}$. Then, there exists a positive constant $C_{\phi}$ depending on $\phi$ and $\lambda$ such that
\begin{equation}\label{Intro:WellPdnss}
\left\{
\begin{array}{ll}
\textrm{if }\lambda\in(-\infty,0), & (x+y)^{\lambda}\left| (A\phi)(x,y)\right|\leq C_{\phi} (xy)^{\lambda}\hspace{5mm} \forall \phi \in \mathcal H_{\lambda},\\
\textrm{if }\lambda\in(0,1], & (x+y)^{\lambda}\left| (A\phi)(x,y)\right|\leq C_{\phi}\left(1 + x^{2\lambda}+y^{2\lambda}\right)\hspace{5mm} \forall \phi \in \mathcal H_{\lambda},\\
\textrm{if }\lambda\in(0,1], & (x\wedge y)^{\lambda}\,\left| (A\phi)(x,y)\right|\leq C_{\phi} (xy)^{\lambda} \hspace{5mm} \forall \phi \in \mathcal H^e_{\lambda}.\\
\end{array}\right.
\end{equation}
\end{lemma}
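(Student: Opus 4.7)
The plan is to prove each of the three estimates by combining the pointwise growth bound coming from membership of $\phi$ in $\mathcal H_{\lambda}$ or $\mathcal H^e_{\lambda}$ with elementary sub/supermultiplicative inequalities for $z\mapsto z^{\lambda}$. The starting point in every case is the triangle inequality $|(A\phi)(x,y)|\leq |\phi(x+y)|+|\phi(x)|+|\phi(y)|$.

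\textbf{Case $\lambda\in(-\infty,0)$.} For $\phi\in\mathcal H_{\lambda}$ the definition gives $|\phi(z)|\leq C_{\phi}z^{\lambda}$ for all $z>0$. Since $\lambda<0$, the map $z\mapsto z^{\lambda}$ is decreasing, so $(x+y)^{\lambda}\leq x^{\lambda}\wedge y^{\lambda}$; in particular $(x+y)^{\lambda}\leq x^{\lambda}+y^{\lambda}$. Therefore $|(A\phi)(x,y)|\leq 3C_{\phi}(x^{\lambda}+y^{\lambda})$. Multiplying by $(x+y)^{\lambda}$ and using $(x+y)^{\lambda}x^{\lambda}\leq y^{\lambda}x^{\lambda}=(xy)^{\lambda}$ and symmetrically $(x+y)^{\lambda}y^{\lambda}\leq(xy)^{\lambda}$ yields $(x+y)^{\lambda}|(A\phi)(x,y)|\leq 6C_{\phi}(xy)^{\lambda}$, which is the first bound.

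\textbf{Case $\lambda\in(0,1]$, $\phi\in\mathcal H_{\lambda}$.} Here $|\phi(z)|\leq C_{\phi}(1+z)^{\lambda}$. I will use the subadditivity $(a+b)^{\lambda}\leq a^{\lambda}+b^{\lambda}$ valid for $\lambda\in(0,1]$ and $a,b\geq 0$. This gives $(1+z)^{\lambda}\leq 1+z^{\lambda}$, hence $|(A\phi)(x,y)|\leq C_{\phi}[(1+x+y)^{\lambda}+(1+x)^{\lambda}+(1+y)^{\lambda}]\leq C'_{\phi}(1+x^{\lambda}+y^{\lambda})$. Combining this with $(x+y)^{\lambda}\leq x^{\lambda}+y^{\lambda}$, the product $(x^{\lambda}+y^{\lambda})(1+x^{\lambda}+y^{\lambda})$ expands into terms that are controlled by $1+x^{2\lambda}+y^{2\lambda}$ using $2x^{\lambda}y^{\lambda}\leq x^{2\lambda}+y^{2\lambda}$ and $x^{\lambda}\leq 1+x^{2\lambda}$. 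This produces the desired bound.

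\textbf{Case $\lambda\in(0,1]$, $\phi\in\mathcal H^e_{\lambda}$.} Now $|\phi(z)|\leq C_{\phi}z^{\lambda}$, and subadditivity gives $(x+y)^{\lambda}\leq x^{\lambda}+y^{\lambda}$, so $|(A\phi)(x,y)|\leq 2C_{\phi}(x^{\lambda}+y^{\lambda})$. For the prefactor we note that $(x\wedge y)^{\lambda}x^{\lambda}\leq (xy)^{\lambda}$ (obvious whether $x\leq y$ or $y\leq x$) and similarly $(x\wedge y)^{\lambda}y^{\lambda}\leq(xy)^{\lambda}$. Multiplying delivers $(x\wedge y)^{\lambda}|(A\phi)(x,y)|\leq 4C_{\phi}(xy)^{\lambda}$. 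None of these computations presents any real obstacle; the only small subtlety is remembering in the second case to absorb the extra $1$ appearing in $(1+z)^{\lambda}$, which forces the slightly weaker right hand side $1+x^{2\lambda}+y^{2\lambda}$ rather than just $(xy)^{\lambda}$.
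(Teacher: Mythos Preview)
Your proof is correct and follows essentially the same approach as the paper: in each case you use the growth bound $|\phi(z)|\leq C z^{\lambda}$ or $|\phi(z)|\leq C(1+z)^{\lambda}$ coming from the definition of $\mathcal H_{\lambda}$ or $\mathcal H^e_{\lambda}$, together with the elementary monotonicity/subadditivity properties of $z\mapsto z^{\lambda}$. The only cosmetic difference is that the paper organizes the first and third cases by factoring out $(x+y)^{\lambda}$ (resp.\ $(x\wedge y)^{\lambda}$) and bounding it by $x^{\lambda}\wedge y^{\lambda}$ before handling the bracket, whereas you distribute and bound term by term; the underlying inequalities are identical.
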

\begin{proof}
Assume first that $\lambda\in(-\infty,0)$ and $\phi \in \mathcal H_{\lambda}$. Since $|\phi(x)|\leq Cx^{\lambda}$ for some constant $C>0$, we have
\begin{equation*}
(x+y)^{\lambda}\left| (A\phi)(x,y)\right|\leq C (x^{\lambda}\wedge y^{\lambda}) \left[(x + y)^{\lambda} + x^{\lambda} + y^{\lambda} \right] \leq  C (x y)^{\lambda}.
\end{equation*}
Next, for $\lambda\in(0,1]$ and $\phi \in \mathcal H_{\lambda}$, since $|\phi(x)|\leq C (1 + x^{\lambda})$ for some constant $C>0$, we have
\begin{equation*}
(x+y)^{\lambda}\left| (A\phi)(x,y)\right|  \leq   C (x + y)^{\lambda}  \left [3 + (x + y)^{\lambda} + x^{\lambda} + y^{\lambda} \right] \leq C \left(1 + x^{2\lambda}+y^{2\lambda}\right) .
\end{equation*}
Finally, for $\lambda\in(0,1]$ and $\phi(x)\in \mathcal H^e_{\lambda}$, there exists $C>0$ such that $|\phi(x)|\leq C x^{\lambda}$ and we have
\begin{equation*}
(x\wedge y)^{\lambda}\left| (A\phi)(x,y)\right|\leq C (x \wedge y)^{\lambda} \left[(x + y)^{\lambda} + x^{\lambda} + y^{\lambda} \right] \leq C (xy)^{\lambda}.
\end{equation*}
\end{proof}

\begin{lemma}\label{Dev:lemmaforfubini}
Let $\lambda\in(-\infty,1]\setminus \{0\}$  and $K \in W^{1,\infty}\left((\varepsilon,1/\varepsilon)^2\right)$ for every $\varepsilon \in (0,1)$. If $K$ satisfies (\ref{case1}), then for all $(x,v,y)\in (0,+\infty)^3$: 
\begin{eqnarray}\label{Dev:WeakDerCasNeg}
&& K(v,y) \left[ \mathds 1_{(0,x]}(v+y)- \mathds 1_{(0,x]}(v) -  \mathds 1_{(0,x]}(y) \right]\nonumber\\ 
&&\hspace{3.7cm}=\,K(x-y,y) \mathds 1_{(0,x]}(v+y) -  K(x,y)\mathds 1_{(0,x]}(v)\\
&&\hspace{4.2cm}-\displaystyle\int_v^{+\infty}\partial_x K(z,y)\left[ \mathds 1_{(0,x]}(z+y)- \mathds 1_{(0,x]}(z) -  \mathds 1_{(0,x]}(y) \right] dz. \nonumber
\end{eqnarray}
If $K$ satisfies (\ref{case2}) or (\ref{special_case}), then for all $(x,v,y)\in (0,+\infty)^3$: 
\begin{eqnarray}\label{Dev:WeakDerCasPos}
&&K(v,y) \left[ \mathds 1_{(x,+\infty)}(v+y)- \mathds 1_{(x,+\infty)}(v) -  \mathds 1_{(x,+\infty)}(y) \right]\nonumber\\ 
&&\hspace{2.7cm}=\,K(x-y,y)\mathds 1_{x > y}\mathds 1_{(x,+\infty)}(v+y) -  K(x,y)\mathds 1_{(x,+\infty)}(v)\\
&&\hspace{3.2cm}+\displaystyle\int_0^{v}\partial_x K(z,y)\left[ \mathds 1_{(x,+\infty)}(z+y)- \mathds 1_{(x,+\infty)}(z) -  \mathds 1_{(x,+\infty)}(y) \right] dz.\nonumber
\end{eqnarray}
\end{lemma}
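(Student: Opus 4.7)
The plan is to apply the fundamental theorem of calculus to the product $v\mapsto K(v,y)\,h(v)$, where $h(v)$ denotes the bracketed step function: for identity (\ref{Dev:WeakDerCasNeg}), $h(v)=\mathds 1_{(0,x]}(v+y)-\mathds 1_{(0,x]}(v)-\mathds 1_{(0,x]}(y)$, and the analogue with $\mathds 1_{(x,+\infty)}$ for (\ref{Dev:WeakDerCasPos}). For fixed $x,y>0$ the function $h(\cdot)$ is piecewise constant, with at most two jumps located at $v=x-y$ (present only when $x>y$) and at $v=x$; the sizes of these jumps are $-1$ and $+1$ respectively in the first case, $+1$ and $-1$ in the second. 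Since $K(\cdot,y)\in W^{1,\infty}$ on every compact subset of $(0,+\infty)$, the product $K(\cdot,y)h(\cdot)$ is of locally bounded variation, with absolutely continuous part having derivative $\partial_x K(\cdot,y)h(\cdot)$ plus two Dirac jumps encoding the discontinuities of $h$.

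For (\ref{Dev:WeakDerCasNeg}) I integrate from $v$ to $+\infty$. The growth bound in (\ref{case1}) gives $K(v,y)\leq\kappa_0(v+y)^{\lambda}\to 0$ as $v\to+\infty$ (since $\lambda<0$), and for $v>x$ one has $h(v)=-\mathds 1_{(0,x]}(y)$, so $K(v,y)h(v)\to 0$ at $+\infty$. Splitting $(v,+\infty)$ into the subintervals on which $h$ is constant and applying the ordinary FTC on each, the absolutely continuous pieces reassemble into $-\int_v^{+\infty}\partial_x K(z,y)h(z)\,dz$, while the two jumps contribute boundary terms $-K(x-y,y)\mathds 1_{x>y}\mathds 1_{v<x-y}$ (from the $-1$ jump at $x-y$) and $+K(x,y)\mathds 1_{v<x}$ (from the $+1$ jump at $x$). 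Isolating $K(v,y)h(v)$ on one side and recognising $\mathds 1_{x>y}\mathds 1_{v\leq x-y}=\mathds 1_{(0,x]}(v+y)$ and $\mathds 1_{v\leq x}=\mathds 1_{(0,x]}(v)$ (valid outside the measure-zero points $v\in\{x-y,x\}$, where one checks the identity directly by convention on the indicators) yields (\ref{Dev:WeakDerCasNeg}).

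For (\ref{Dev:WeakDerCasPos}) the same scheme applies but one integrates from $0$ to $v$. The boundary contribution at $0^+$ vanishes because $h(0^+)=0$ (the three indicators cancel) while $K(v,y)$ is bounded as $v\to 0^+$ under (\ref{case2}) and vanishes at $0^+$ under (\ref{special_case}) (since $(v\wedge y)^\lambda\to 0$ for $\lambda>0$). With the jumps of $h$ now being $+1$ at $x-y$ and $-1$ at $x$, and with integration in the positive direction, the FTC produces exactly the right-hand side of (\ref{Dev:WeakDerCasPos}): the integral keeps a $+$ sign, the two boundary terms appear with signs $+$ and $-$ as displayed, and the identity $\mathds 1_{x>y}\mathds 1_{v>x-y}=\mathds 1_{x>y}\mathds 1_{(x,+\infty)}(v+y)$ accounts for the $\mathds 1_{x>y}$ factor already present there.

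The only real subtlety is the boundary behaviour---at $+\infty$ for (\ref{Dev:WeakDerCasNeg}) and at $0^+$ for (\ref{Dev:WeakDerCasPos})---which is precisely what the hypotheses (\ref{case1}), (\ref{case2}), (\ref{special_case}) are tailored to secure. Apart from that, the argument is a finite case analysis on the relative order of $v$, $x-y$ and $x$ (together with the auxiliary case $x\leq y$ where the jump at $x-y$ is simply absent), so I anticipate no real obstacle beyond the careful bookkeeping of the factor $\mathds 1_{x>y}$.
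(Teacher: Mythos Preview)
Your approach is correct and essentially coincides with the paper's: both arguments rest on the fundamental theorem of calculus for $z\mapsto K(z,y)$, combined with the observation that the boundary term at $+\infty$ vanishes under (\ref{case1}) (since $K(z,y)\leq\kappa_0(z+y)^\lambda\to 0$) and that the integrand is supported away from $0$ under (\ref{case2})/(\ref{special_case}). The only difference is organisational: the paper treats the three indicators $\mathds 1_{(0,x]}(z+y)$, $\mathds 1_{(0,x]}(z)$, $\mathds 1_{(0,x]}(y)$ separately and computes each integral $\int_v^{+\infty}\partial_xK(z,y)\,\mathds 1_{\cdots}\,dz$ explicitly (and, in the positive case, first rewrites the bracket via the identity $\mathds 1_{z>x-y}-\mathds 1_{y>x}=\mathds 1_{y\leq x}\mathds 1_{z>x-y}$ so that the domain of integration is manifestly bounded away from $0$), whereas you package the three indicators into a single piecewise-constant function $h$ and read off the jump contributions at $z=x-y$ and $z=x$. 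The two computations are term-by-term equivalent; the paper's decomposition is slightly more explicit about why no boundary issue arises at $0$ in the positive case (one never actually integrates down to $0$), while your formulation via $h(0^+)=0$ is more conceptual but requires the side remark that the exceptional point $y=x$ be handled separately.
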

\begin{proof}
For $\lambda\in(-\infty,1]\setminus \{0\}$ we have that $K(\cdot,\cdot)$ and its weak partial derivatives belong to $L^{\infty}\left((\varepsilon,1/\varepsilon)^2\right)$, whence, for all $0 < a \leq b < +\infty$ and for all $y > 0$ (see for exemple \cite{Ziemer}): 
\begin{equation}
\int_a^b \partial_x K(z,y) dz = K(b,y) - K(a,y).
\end{equation}
First assume (\ref{case1}), and fix $\lambda \in (-\infty,0)$. Remark that:
\[ \int_a^{+\infty} \partial_x K(z,y) dz = \lim_{b\rightarrow +\infty} \int_a^b \partial_x K(z,y) dz= \lim_{b\rightarrow +\infty} K(b,y) - K(a,y) = - K(a,y).\]
Hence,
\begin{eqnarray*}
\int_v^{+\infty}\partial_x K(z,y) \mathds 1_{(0,x]}(z+y) dz & = & \mathds 1_{x>y} \mathds 1_{v\leq x-y}\int_0^{+\infty}\partial_x K(z,y)\mathds 1_{v \leq z \leq x-y}\, dz\\
& = & \mathds 1_{(0,x]}(v+y) \left[ K(x-y,y) - K(v,y) \right]. 
\end{eqnarray*}
Next,
\begin{eqnarray*}
-\int_v^{+\infty}\partial_x K(z,y) \mathds 1_{(0,x]}(z)  dz & = &- \mathds 1_{v\leq x}\int_0^{+\infty}\partial_x K(z,y)\mathds 1_{v\leq z \leq x}\, dz\\
& = & \mathds 1_{(0,x]}(v) \left[ K(v,y) - K(x,y) \right], 
\end{eqnarray*}
\begin{eqnarray*}
-\int_v^{+\infty}\partial_x K(z,y) \mathds 1_{(0,x]}(y)  dz & = &- \mathds 1_{y\leq x}\int_0^{+\infty}\partial_x K(z,y)\mathds 1_{v\leq z }\, dz\\
& = & \mathds 1_{(0,x]}(y) K(v,y). 
\end{eqnarray*}
Adding these three terms to the terms on the right-hand of (\ref{Dev:WeakDerCasNeg}) the result follows. \medskip

Next, assume (\ref{case2}) or (\ref{special_case}). Observe that for $(x,y,z)\in (0,+\infty)^3$, we have
\begin{equation}\label{Results:fubinidem}
\mathds 1_{z>x-y} - \mathds 1_{y>x} = \mathds 1_{y \leq x} \mathds 1_{z>x-y}.
\end{equation}
Thus,
\begin{eqnarray*}
\int_0^{v}\partial_x K(z,y)\left[ \mathds 1_{z+y>x}- \mathds 1_{z>x} -  \mathds 1_{y>x}\right] dz \hspace{5cm}\\
=\, \int_0^{v}\partial_x K(z,y)\left[  \mathds 1_{y \leq x} \mathds 1_{z>x-y}   - \mathds 1_{z>x} \right]dz\hspace{4.2cm}\\
=\,  \mathds 1_{y \leq x} \mathds 1_{v>x-y} \int_{x-y}^{v}\partial_x K(z,y)dz -\mathds 1_{v > x} \int_{x}^{v}\partial_x K(z,y)dz\hspace{1.4cm}\\
=\, \mathds 1_{y \leq x} \mathds 1_{v>x-y} \left[K(v,y) - K(x-y,y) \right] - \mathds 1_{v > x} \left[K(v,y) - K(x,y) \right]\\
=\, \left[\mathds 1_{v>x-y} - \mathds 1_{y>x} \right] K(v,y) -  \mathds 1_{y < x} \mathds 1_{v>x-y}K(x-y,y)\hspace{1.85cm}\\
 - \mathds 1_{v > x} \left[K(v,y) - K(x,y) \right].\hspace{1mm}
\end{eqnarray*}
Adding these terms to the remaining terms on the right-hand of (\ref{Dev:WeakDerCasPos}),  the result follows. 
\end{proof}
Now we will show a lemma which is useful to show Proposition \ref{Prop:bounded_moments} stating that the $\alpha$-moments of $\mu_0$ and $\mu^n_0$ remain bounded in time.
\begin{lemma}\label{Prelim:bound_alphageq1}
Consider $\alpha \in \mathbb R$, $\lambda\in(-\infty,1]$ and a kernel $K$ satisfying either (\ref{case1}), (\ref{case2}) or (\ref{special_case}). We set $\vartheta(x) = x^{\alpha}$. Then, 
\begin{enumerate}[(i)]
\item \label{Prelim:decreasing} if $\alpha \in (-\infty,1]$,  $(A\vartheta)(x,y) \leq 0$, for $(x,y)\in(0,+\infty)^2$,
\item \label{bound_alphageq1} if $\alpha \in (1,+\infty)$, $K(x,y)\, |(A\vartheta)(x,y)| \leq C_{\lambda,\alpha}\left(x^{\alpha}y^{\lambda} + x^{\lambda}y^{\alpha}\right)$, for $(x,y)\in(0,+\infty)^2$,
\end{enumerate}
where $C_{\lambda,\alpha}$ is a positive constant depending on $\lambda$, $\alpha$ and $\kappa_0$.
\end{lemma}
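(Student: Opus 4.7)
The plan is to treat (i) by elementary subadditivity of $t\mapsto t^\alpha$ for $\alpha\le 1$, and (ii) by a one-variable Taylor/mean-value estimate on $(x+y)^\alpha - x^\alpha$, combined with the upper bounds on $K$ furnished by (\ref{case1}), (\ref{case2}) and (\ref{special_case}). No deep ingredient is needed; the only care required is a uniform treatment of the two possible signs of $\lambda$ and of the different upper bounds on the kernel.

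For (i), I will split into three subcases. If $\alpha=0$, then $(A\vartheta)(x,y)=1-1-1=-1\le0$. If $\alpha<0$, then $(x+y)^\alpha\le x^\alpha$ and $y^\alpha>0$, giving $(A\vartheta)(x,y)\le -y^\alpha\le 0$. If $\alpha\in(0,1]$, I introduce $f(t)=(1+t)^\alpha-t^\alpha$ on $[0,+\infty)$; since $f'(t)=\alpha\bigl[(1+t)^{\alpha-1}-t^{\alpha-1}\bigr]\le 0$ (as $\alpha-1\le 0$), $f$ is nonincreasing with $f(0)=1$, and evaluating at $t=y/x$ yields $(x+y)^\alpha\le x^\alpha+y^\alpha$, i.e.\ $(A\vartheta)(x,y)\le 0$.

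For (ii), $\alpha>1$ forces $(A\vartheta)(x,y)=(x+y)^\alpha-x^\alpha-y^\alpha\ge 0$ by convexity of $t\mapsto t^\alpha$, so the absolute value equals the expression itself. By symmetry assume $x\ge y$. A direct computation gives
\[
(x+y)^\alpha-x^\alpha=\alpha\int_0^y(x+s)^{\alpha-1}\,ds\le \alpha\,y\,(x+y)^{\alpha-1}\le \alpha\,2^{\alpha-1}\,x^{\alpha-1}\,y,
\]
where I used $x+y\le 2x$ and $\alpha-1\ge 0$. Dropping the nonpositive term $-y^\alpha$ and restoring symmetry yields
\[
|(A\vartheta)(x,y)|\le \alpha\,2^{\alpha-1}\,(x\vee y)^{\alpha-1}(x\wedge y).
\]

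It remains to multiply by $K(x,y)$. Under (\ref{case1}) or (\ref{case2}), $K(x,y)\le \kappa_0(x+y)^\lambda$, and $(x+y)^\lambda\le 2^{\lambda_+}(x\vee y)^\lambda$ for every $\lambda\le 1$ (treat $\lambda\le 0$ and $\lambda\in(0,1]$ separately), producing the bound $C_{\lambda,\alpha}(x\vee y)^{\alpha+\lambda-1}(x\wedge y)$. Under (\ref{special_case}), $K(x,y)\le \kappa_0(x\wedge y)^\lambda$, producing $C_{\lambda,\alpha}(x\vee y)^{\alpha-1}(x\wedge y)^{\lambda+1}$. Since $\lambda\le 1$, in the first case $(x\wedge y)^{1-\lambda}\le(x\vee y)^{1-\lambda}$ (which lets me trade one power of $x\vee y$ for one of $x\wedge y$), and in the second $(x\wedge y)\le(x\vee y)$; both expressions are therefore bounded by $(x\vee y)^\alpha(x\wedge y)^\lambda$. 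Finally, $(x\vee y)^\alpha(x\wedge y)^\lambda$ equals one of $x^\alpha y^\lambda$, $x^\lambda y^\alpha$ and is thus $\le x^\alpha y^\lambda+x^\lambda y^\alpha$, which closes the argument. No step looks to be a genuine obstacle; the only bookkeeping point is ensuring the exchange $(x\wedge y)^{1-\lambda}\le(x\vee y)^{1-\lambda}$ is valid precisely because $\lambda\le 1$.
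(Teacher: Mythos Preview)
Your proof is correct and follows essentially the same approach as the paper: subadditivity of $t\mapsto t^\alpha$ for (i), and a mean-value bound on $(x+y)^\alpha-x^\alpha$ combined with the kernel upper bound for (ii). The only difference is organizational: the paper uses the single bound $K(x,y)\le\kappa_0(x^\lambda+y^\lambda)$ for all three cases and then handles the resulting cross term $x^{\lambda+\alpha-1}y$ by splitting on $x\gtrless y$, whereas you work throughout in the $(x\vee y,\,x\wedge y)$ notation and treat (\ref{special_case}) separately; your version is slightly more streamlined but the two arguments are equivalent.
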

\begin{proof} 
Point \textit{(\ref{Prelim:decreasing})} is obvious, since for $\alpha \leq 1$, $ (x+y)^{\alpha} - x^{\alpha}  - y^{\alpha}  \leq  (x^{\alpha} + y^{\alpha}) - x^{\alpha}  - y^{\alpha} = 0$.\medskip

Next, if $\alpha > 1$, using (\ref{case1}), (\ref{case2}) or (\ref{special_case}), there holds $K(x,y)\leq \kappa_0 (x^{\lambda}+y^{\lambda})$. We get
\begin{eqnarray*}
K(x,y) |(A\vartheta)(x,y)|& \leq &\kappa_0 x^\lambda \left[\left|(x+y)^{\alpha} - x^{\alpha}\right| + y^{\alpha}\right] + \kappa_0 y^\lambda \left[\left|(x+y)^{\alpha} - y^{\alpha}\right| + x^{\alpha}\right]\\
&\leq & \alpha \kappa_0 \left[ \left(x^{\lambda}y^{\alpha} + x^{\alpha} y^{\lambda}\right) + (x+y)^{\alpha-1} \left(x^{\lambda}y + x y^{\lambda}\right)\right]\\
& \leq & C \left[ \left(x^{\lambda}y^{\alpha} + x^{\alpha} y^{\lambda}\right) + \left(x^{\alpha-1}+y^{\alpha-1}\right)\left(x^{\lambda}y + x y^{\lambda}\right)\right]\\
& \leq &  C  \left(x^{\lambda}y^{\alpha} + x^{\alpha} y^{\lambda} + x^{\lambda+\alpha-1}y + x y^{\lambda+\alpha-1}\right).
\end{eqnarray*}
Note that $x^{\lambda+\alpha-1}y = x^{\alpha}y^{\lambda} \left( \frac{y}{x}\right)^{1-\lambda} = x^{\lambda}y^{\alpha} \left( \frac{x}{y}\right)^{\alpha-1}\leq x^{\alpha} y^{\lambda}\mathds 1_{x>y} + x^{\lambda}y^{\alpha}\mathds 1_{x\leq y}$. We have an equivalent bound for the fourth term and the result follows.
\end{proof}

\begin{proposition}\label{Prop:bounded_moments}
Consider $\lambda \in (-\infty,1]\setminus\{0\}$ and a coagulation kernel $K$ satisfying either (\ref{case1}), (\ref{case2}) or (\ref{special_case}). Let $\mu_0 \in \mathcal M^{+}_{\lambda}$, and denote by $(\mu_t)_{t\in[0,T)}$ the $(\mu_{0},K,\lambda)$-weak solution to Smoluchowski's equation. Let $\mu^n_0$ be a deterministic discrete measure and  $(\mu^n_t)_{t\geq 0}$ the associated $(n,K,\mu^n_0)$-Marcus-Lushnikov process. Let $\alpha \in \mathbb R$, then
\begin{enumerate}[(a)]
\item \label{Prop:bounded_moments_neg} if $\alpha \leq 1$, $t\mapsto M_{\alpha}(\mu_t)$ and $t\mapsto M_{\alpha}(\mu^n_t)$ are a.s. non-increasing;

\item \label{Prop:bounded_moments_pos} if $\alpha > 1$, there exists a positive constant $C_{\lambda,\alpha}$ depending on $\lambda$, $\alpha$ and $\kappa_0$ such that $ M_{\alpha}(\mu_t) \leq  M_{\alpha}(\mu_0)\exp\left[t\,C_{\lambda,\alpha} M_{\lambda}(\mu_0)\right]$ and $\mathbb E \left[M_{\alpha}(\mu^n_t)\right] \leq  M_{\alpha}(\mu^n_0)\exp\left[t\,C_{\lambda,\alpha}M_{\lambda}(\mu^n_0)\right]$.
\end{enumerate}
\end{proposition}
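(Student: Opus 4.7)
In both parts I would take $\vartheta(x)=x^\alpha$ as a test function (via a truncation $\vartheta_R$ adapted to the relevant test-function space) in the weak formulation (\ref{Intro:SmoEq_Weak}) for the Smoluchowski solution and in the semimartingale representation (\ref{Dev:Eq-mu_n-2}) for the Marcus-Lushnikov process, and invoke Lemma \ref{Prelim:bound_alphageq1} to control $K\,(A\vartheta)$. The jump structure of the Marcus-Lushnikov process allows a direct pathwise argument that avoids the truncation issue entirely.

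\textbf{Part (a).} For the Marcus-Lushnikov process I argue pathwise: $M_\alpha(\mu_t^n)$ is constant between jumps, and at a coalescence of two masses $x,y$ its increment equals $\tfrac{1}{n}(A\vartheta)(x,y)\leq 0$ by Lemma \ref{Prelim:bound_alphageq1}(\ref{Prelim:decreasing}); hence $t\mapsto M_\alpha(\mu_t^n)$ is a.s.\ non-increasing. For the Smoluchowski solution I would test (\ref{Intro:SmoEq_Weak}) against the truncation $\vartheta_R(x)=x^\alpha\,\mathds 1_{[R^{-1},R]}(x)$, which is bounded with compact support in $(0,+\infty)$ and hence lies in $\mathcal H_\lambda$ (resp.\ $\mathcal H_\lambda^e$); a brief case analysis shows that $(A\vartheta_R)(x,y)\leq 0$ pointwise (reducing each case either to the subadditivity of $\vartheta$ when all three of $x,y,x+y$ are in $[R^{-1},R]$, or to the observation that outside this range one loses a positive contribution). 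The right-hand side of the weak formulation is then $\leq 0$, so $t\mapsto\langle\mu_t,\vartheta_R\rangle$ is non-increasing, and monotone convergence as $R\to\infty$ yields the monotonicity of $M_\alpha(\mu_t)$.

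\textbf{Part (b).} Here $\alpha>1$, and Lemma \ref{Prelim:bound_alphageq1}(\ref{bound_alphageq1}) provides $K\,|A\vartheta|\leq C_{\lambda,\alpha}(x^\alpha y^\lambda+x^\lambda y^\alpha)$. For the Smoluchowski solution I would test (\ref{Intro:SmoEq_Weak}) against $\vartheta_R(x)=x^\alpha\wedge R^\alpha$, which is bounded and therefore lies in the relevant test-function space. Using the symmetry of $K$ and the pointwise bound above,
\begin{equation*}
\frac{d}{dt}\langle\mu_t,\vartheta_R\rangle \,\leq\, C_{\lambda,\alpha}\,M_\alpha(\mu_t)\,M_\lambda(\mu_t) \,\leq\, C_{\lambda,\alpha}\,M_\lambda(\mu_0)\,M_\alpha(\mu_t),
\end{equation*}
where the second inequality invokes part (a). Integrating in time and letting $R\to\infty$ by monotone convergence produces an integral inequality for $M_\alpha(\mu_t)$ to which Gronwall's lemma applies, yielding the stated exponential bound. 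For the Marcus-Lushnikov process, the same test function $\vartheta_R$ in (\ref{Dev:Eq-mu_n-2}), taken in expectation, gives the analogous differential inequality for $\mathbb E\langle\mu_t^n,\vartheta_R\rangle$: the compensated Poisson integral is a true martingale because $\vartheta_R$ is bounded and there are at most $N$ particles, and the diagonal correction is dropped using $(A\vartheta)(v,v)=(2^\alpha-2)v^\alpha\geq 0$. Finally $M_\lambda(\mu_s^n)\leq M_\lambda(\mu_0^n)$ a.s.\ (from part (a)) factors out as a deterministic constant, and Gronwall followed by $R\to\infty$ concludes.

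\textbf{Main obstacle.} The delicate point is the truncation: one must choose $\vartheta_R$ so that it lives in the correct space ($\mathcal H_\lambda$ or $\mathcal H_\lambda^e$) and $A\vartheta_R$ inherits the sign (in (a)) or the polynomial bound (in (b)), and one must then commute the limit $R\to\infty$ with the time derivative. For (b) the commutation rests on the Gronwall constant $C_{\lambda,\alpha}M_\lambda(\mu_0)$ being independent of $R$, so the exponential bound passes to the limit on both sides via monotone convergence. The bootstrap requires showing that $M_\alpha(\mu_t)$ is finite in the first place — which is secured by running Gronwall at level $R$ before removing the truncation — and a standard localisation argument ensures that the compensated Poisson integral is genuinely a martingale, though this is essentially automatic for the Marcus-Lushnikov process because $N(t)\leq N$ and $\vartheta_R$ is bounded.
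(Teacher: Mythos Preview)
Your overall strategy---plug $\vartheta(x)=x^\alpha$ into the weak formulation, invoke Lemma \ref{Prelim:bound_alphageq1}, use part (a) to control $M_\lambda$, then Gronwall---is exactly what the paper does. The pathwise jump argument for the Marcus-Lushnikov process in (a) and the treatment of the diagonal term via $(A\vartheta)(v,v)\geq 0$ in (b) also match the paper.

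There is, however, a genuine error in your truncation for part (a). The claim that $(A\vartheta_R)(x,y)\leq 0$ for $\vartheta_R(x)=x^\alpha\,\mathds 1_{[R^{-1},R]}(x)$ is false at the lower cutoff: take $x=y=R^{-1}/2$, so that $x,y\notin[R^{-1},R]$ but $x+y=R^{-1}\in[R^{-1},R]$; then $\vartheta_R(x)=\vartheta_R(y)=0$ while $\vartheta_R(x+y)=R^{-\alpha}>0$, hence $(A\vartheta_R)(x,y)=R^{-\alpha}>0$. Your informal justification (``outside this range one loses a positive contribution'') goes the wrong way when it is $\vartheta_R(x)$ or $\vartheta_R(y)$ that is killed, since these enter $A\vartheta_R$ with a minus sign. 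A truncation that does preserve subadditivity is $\vartheta_R(x)=(x\vee R^{-1})^\alpha\wedge R^\alpha$ for $\alpha\in(0,1]$ (and similarly for $\alpha\leq 0$); one checks case by case that this remains subadditive.

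For comparison, the paper sidesteps the issue entirely: it simply takes $\phi(x)=x^\alpha$ in (\ref{Intro:SmoEq_Weak}) and (\ref{Dev:Eq-mu_n-1})--(\ref{Dev:Eq-mu_n-2}) and computes formally, without checking membership in $\mathcal H_\lambda$ or $\mathcal H_\lambda^e$. Your instinct to truncate is the more honest route, but the specific cutoff you chose does not work. Note also that your choice $\vartheta_R=x^\alpha\wedge R^\alpha$ in part (b) fails to lie in $\mathcal H_\lambda$ when $\lambda<0$ (since $x^{-\lambda}R^\alpha$ is unbounded at infinity); a two-sided truncation is again needed there. Once the truncation is repaired, your argument and the paper's coincide.
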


\begin{proof}
Let $\phi(x) = x^{\alpha}$. For point \textit{(\ref{Prop:bounded_moments_neg})}, first consider  (\ref{Intro:SmoEq_Weak}). From  Lemma \ref{Prelim:bound_alphageq1}.--\textit{(\ref{Prelim:decreasing})}, we immediately deduce
\begin{eqnarray*}
\dfrac{d}{dt}\langle \mu_t(dx), \phi(x) \rangle =  \dfrac{d}{dt} M_{\alpha}(\mu_t) = \dfrac{1}{2} \left\langle\mu_t(dx) \mu_t(dy), \left(A\phi\right)(x,y)K(x,y)\right\rangle  \leq 0.
\end{eqnarray*}
Next, consider (\ref{Dev:Eq-mu_n-1}) and remark that $ \phi\left(X^i_{s-}+ X^j_{s-}\right) - \phi\left(X^i_{s-}\right) - \phi\left(X^j_{s-}\right) = \left(A\phi\right) \left(X^i_{s-},X^j_{s-}\right)$. From Lemma \ref{Prelim:bound_alphageq1}.--\textit{(\ref{Prelim:decreasing})}  and since $J$ is a positive measure, we deduce that the jumps of $M_{\alpha}(\mu^n_t) = \langle \mu^n_t(dx), \phi(x) \rangle$ are negative and the conclusion follows.\medskip

For point \textit{(\ref{Prop:bounded_moments_pos})}, consider (\ref{Intro:SmoEq_Weak}). According to Lemma \ref{Prelim:bound_alphageq1}.--\textit{(\ref{bound_alphageq1})}, we deduce:
\begin{eqnarray*}
\dfrac{d}{dt} M_{\alpha}\left(\mu_t\right)\, = \,  \dfrac{1}{2} \left\langle\mu_t(dx) \mu_t(dy), (A\phi)(x,y)K(x,y)\right\rangle  & \leq & \dfrac{C_{\lambda,\alpha}}{2} \left\langle \mu_t(dx)  \mu_t (dy), x^{\alpha}y^{\lambda} + x^{\lambda}y^{\alpha}\right\rangle \\
& \leq & C_{\lambda,\alpha} \,M_{\lambda}\left(\mu_t\right) M_{\alpha}\left(\mu_t\right)\\
& \leq & C_{\lambda,\alpha} \, M_{\lambda}\left(\mu_0\right)M_{\alpha}\left(\mu_t\right),
\end{eqnarray*}
we used the point \textit{(\ref{Prop:bounded_moments_neg})}. We conclude using the Gronwall lemma.\medskip

\noindent
Next, we take the expectation in (\ref{Dev:Eq-mu_n-2}). Remarking that $\left(A\phi\right)(x,x)\geq 0$, using Lemma \ref{Prelim:bound_alphageq1}.--\textit{(\ref{bound_alphageq1})}, since $\mu^n_0$ is deterministic, and since $M_{\alpha}\left(\mu^n_t\right) = \langle \mu^n_t(dx), \phi(x) \rangle$, we deduce:
\begin{eqnarray*}
 \mathbb E\left[M_{\alpha}\left(\mu^n_t\right)\right] & = & M_{\alpha}\left(\mu^n_0\right) + \frac{1}{2} \int_0^t \mathbb E\left[ \left\langle \mu^n_s(dx) \mu^n_s(dy), \left(A\phi\right)(x,y)K(x,y)\right\rangle \right] ds \\
&& \hspace{2.3cm} -\,\frac{1}{2n} \int_0^t \mathbb E\left[ \left\langle \mu^n_s (dx), \left(A\phi\right)(x,x)K(x,x)\right\rangle \right] ds,\\
&\leq & M_{\alpha}\left(\mu^n_0\right) + \frac{C_{\lambda,\alpha}}{2} \int_0^t \mathbb E\left[ \left\langle \mu^n_s(dx)  \mu^n_s(dy), x^{\alpha}y^{\lambda} + x^{\lambda}y^{\alpha}\right\rangle \right] ds\\
&\leq &M_{\alpha}\left(\mu^n_0\right) +  C_{\lambda,\alpha}\int_0^t \mathbb E\left[ M_{\lambda}\left(\mu^n_s\right) M_{\alpha}\left(\mu^n_s\right)\right] ds\\
&\leq & M_{\alpha}\left(\mu^n_0\right) +  C_{\lambda,\alpha}\, M_{\lambda}\left(\mu^n_0\right) \int_0^t \mathbb E\left[ M_{\alpha}\left(\mu^n_s\right)\right] ds,
\end{eqnarray*}
where we used the point \textit{(\ref{Prop:bounded_moments_neg})}. We conclude using the Gronwall lemma.
\end{proof}

Finally, we present the 
\begin{proof}[Proof of Lemma \ref{lemma_integrals}]
Assume that $\lambda\in(0,1]$ and recall (\ref{theta}). First, for \textit{(\ref{lemma:1})} and \textit{(\ref{lemma:2})}, by direct integration, we have
\begin{eqnarray*}
\int_0^{+\infty} x^{\lambda - 1}\theta_{(x)}^n dx & = &\frac{1}{\sqrt{n}}\int_0^{1}  x^{\lambda - 1} dx + \frac{1}{\sqrt{n}}\int_1^{+\infty}  x^{-\lambda - \varepsilon- 1} dx \\
& = &\frac{1}{\lambda\sqrt{n}} +  \frac{1}{(\lambda + \varepsilon)\sqrt{n}} \leq \frac{2}{\lambda \sqrt{n}},
\end{eqnarray*}
and
\begin{eqnarray*}
\int_0^{+\infty} x^{2\lambda - 1}\theta_{(x)}^n dx & = &\frac{1}{\sqrt{n}}\int_0^{1}  x^{2\lambda - 1} dx + \frac{1}{\sqrt{n}}\int_1^{+\infty}  x^{- \varepsilon- 1} dx \\
& = & \frac{1}{2\lambda\sqrt{n}} +  \frac{1}{\varepsilon\sqrt{n}} \leq \frac{(\lambda + \varepsilon)}{\lambda \varepsilon\sqrt{n}}.
\end{eqnarray*}
Next, for \textit{(\ref{lemma:3})} we have:\medskip
\begin{eqnarray*}
A_n(v,y) = v^{\lambda}\int_0^{+\infty}  \frac{x^{\lambda-1}}{\theta_{(x)}^n}\left(\mathds 1_{x <v\wedge y} + \mathds 1_{ v\vee y < x <v + y} \right)  dx & = & v^{\lambda} \int_{0}^{v\wedge y} \frac{x^{\lambda-1}}{\theta_{(x)}^n} dx +v^{\lambda}\int_{v\vee y}^{v + y} \frac{x^{\lambda-1}}{\theta_{(x)}^n} dx\\
& := & I_n(v,y) + J_n(v,y). 
\end{eqnarray*}
We have the following bounds: if $v \wedge y \leq 1$, then 
\begin{equation*}
I_n(v,y)  = v^{\lambda}\int_0^{v\wedge y} \sqrt{n}\, x^{\lambda-1}dx = \frac{\sqrt{n}}{\lambda}v^{\lambda}(v\wedge y)^{\lambda} \leq \frac{\sqrt{n}}{\lambda}v^{\lambda} y^{\lambda}.
\end{equation*}
Next, if $v \wedge y > 1$,
\begin{eqnarray*}
I_n(v,y) & = & v^{\lambda}\int_0^{1} \sqrt{n}\, x^{\lambda-1}dx  + v^{\lambda}\int_{1}^{v \wedge y} \sqrt{n}\, x^{3\lambda+\varepsilon-1} dx\\
& \leq & \sqrt{n}\, v^{\lambda} \left[ \frac{1}{\lambda} + \frac{(v \wedge y)^{3\lambda + \varepsilon}}{3\lambda + \varepsilon}\right]\\
& \leq & \frac{\sqrt{n}}{\lambda}\left[  v^{\lambda} + (v \wedge y)^{3\lambda + \varepsilon}v^{\lambda}\right]\\
& \leq & \frac{\sqrt{n}}{\lambda} \left[  v^{\lambda}y^{\lambda} + (v\wedge y)^{2\lambda} (v \vee y)^{2\lambda+\varepsilon} \mathds 1_{\lambda\in(0,1/2)} + (v\wedge y)(v\vee y)^{4\lambda+\varepsilon-1} \mathds 1_{\lambda\in[1/2,1]}\right].
\end{eqnarray*}
Thus, in any case
\begin{eqnarray*}
I_n(v,y) & = &  \frac{\sqrt{n}}{\lambda} \left[ v^{\lambda}y^{\lambda} + (v\wedge y)^{2\lambda} (v\vee y)^{2\lambda+\varepsilon} \mathds 1_{\lambda\in(0,1/2)} + (v\wedge y)(v\vee y)^{4\lambda+\varepsilon-1} \mathds 1_{\lambda\in[1/2,1]}\right].
\end{eqnarray*}
Next, since $x^{\lambda-1}$ and $\theta_{(x)}^n$ are non-increasing functions, according to the mean value theorem, we deduce that $ J_n(v,y) \leq v^{\lambda} \left( \frac{(v\vee y)^{\lambda-1}}{\theta_{(v+y)}^n}\right)(v\wedge y)$.\medskip

\noindent 
First, assume that $ v + y <1 $, then we get $J_n(v,y) \leq \sqrt{n}\,v^{\lambda} (v\vee y)^{\lambda-1} (v\wedge y) \leq \sqrt{n}\,v^{\lambda} y^{\lambda}$. \medskip

\noindent
Next, assume that $ v + y \geq 1 $, then 
\begin{eqnarray*}
J_n(v,y) & \leq & \sqrt{n}\,v^{\lambda} (v\vee y)^{\lambda-1}  (v + y)^{2\lambda+\varepsilon} (v\wedge y) \leq 2^{2\lambda+\varepsilon}\sqrt{n}\,v^{\lambda}  (v\wedge y) (v \vee y)^{3\lambda+\varepsilon-1} \\
& \leq & 2^{2\lambda+\varepsilon}\sqrt{n}\left[(v\wedge y)^{2\lambda}(v\vee y)^{2\lambda+\varepsilon}\mathds 1_{\lambda\in(0,1/2)} + (v\wedge y)(v\vee y)^{4\lambda+\varepsilon-1}\mathds 1_{\lambda\in[1/2,1]} \right].
\end{eqnarray*}
When $\lambda\in(0,1/2)$, we used $(v\wedge y) \leq (v\wedge y)^{2\lambda} (v\vee y)^{1-2\lambda}$ to deduce the bound $v^{\lambda}(v\wedge y)(v\vee y)^{3\lambda+\varepsilon-1} \leq v^{\lambda} (v\wedge y)^{2\lambda} (v\vee y)^{1-2\lambda} (v\vee y)^{3\lambda+\varepsilon-1} \leq (v\wedge y)^{2\lambda}(v\vee y)^{2\lambda+\varepsilon}$. \medskip

\noindent
Thus, in any case
\begin{equation*}
J_n(v,y) \leq \sqrt{n} v^{\lambda}y^{\lambda} + 2^{2\lambda+\varepsilon}\sqrt{n}\left[(v\wedge y)^{2\lambda}(v\vee y)^{2\lambda+\varepsilon}\mathds 1_{\lambda\in(0,1/2)} + (v\wedge y)(v\vee y)^{4\lambda+\varepsilon-1}\mathds 1_{\lambda\in[1/2,1]} \right].
\end{equation*}
Finally, we deduce the bound:
\begin{eqnarray*}
&& A_n(v,y) \leq \frac{2\sqrt{n}}{\lambda}  v^{\lambda}y^{\lambda} \\
&& \hspace{1cm} +\sqrt{n}\left(2^{2\lambda+\varepsilon} + \frac{1}{\lambda}\right)\left[(v\wedge y)^{2\lambda}(v\vee y)^{2\lambda+\varepsilon}\mathds 1_{\lambda\in(0,1/2)} + (v\wedge y)(v\vee y)^{4\lambda+\varepsilon-1}\mathds 1_{\lambda\in[1/2,1]} \right].
\end{eqnarray*}
This concludes the proof of Lemma \ref{lemma_integrals}.
\end{proof}
\end{appendix}

\bibliographystyle{plain.bst}        
\bibliography{Biblio}                

\begin{thebibliography}{10}

\bibitem{Aldous}
D.J. Aldous.
\newblock Deterministic and {S}tochastic {M}odels for {C}oalescence
  ({A}ggregation, {C}oagulation): {A} {R}eview of the {M}ean-{F}ield {T}heory
  of {P}robabilists.
\newblock {\em Bernoulli}, 5:3--48, 1999.

\bibitem{Rate}
M.~Deaconu, N.~Fournier, and E.~Tanr{\'e}.
\newblock Rate of {C}onvergence of a {S}tochastic {P}article {S}ystem for the
  {S}moluchowski {C}oagulation equation.
\newblock {\em Methodology and {C}omputing in {A}pplied {P}robability},
  (5):131--158, 2003.

\bibitem{Eibeck_Wagner}
A.~Eibeck and W.~Wagner.
\newblock {S}tochastic {P}article {A}pproximations for {Smoluchowski's}
  {C}oagulation {E}quation.
\newblock {\em Markov {P}rocesses and {R}elated {F}ields}, (9):103--130, 2003.

\bibitem{Dist-Coag}
N.~Fournier.
\newblock A distance for coagulation.
\newblock {\em Stoch{.} {P}roc{.} {A}ppl{.}}, 12(2):399--406, 2006.

\bibitem{Sto-Coal}
N.~Fournier.
\newblock On some stochastic coalescents.
\newblock {\em Proba{.} {T}heory {R}elated {F}ields}, 136(4):509--523, 2006.

\bibitem{Conv_ML}
N.~Fournier and J.~S. Giet.
\newblock Convergence of the {M}arcus-{L}ushnikov process.
\newblock {\em Methodology and {C}omputing in {A}pplied {P}robability},
  6:219--231, 2004.

\bibitem{Well-Pdnss}
N.~Fournier and Ph. Lauren\c{c}ot.
\newblock Well-possedness of {S}moluchowski{'}s {C}oagulation {E}quation for a
  {C}lass of {H}omogeneous {K}ernels.
\newblock {\em J{.} {F}unctional {A}nalysis}, 233(2):351--379, 2006.

\bibitem{Sto-Coal2}
N.~Fournier and E.~L{\"o}cherbach.
\newblock Stochastic coalescence with homogeneous-like interaction rates.
\newblock {\em Stoch{.} {P}roc{.} {A}ppl{.}}, 119:45--73, 2009.

\bibitem{Jacod}
J.~Jacod and A.~N. Shiryaev.
\newblock {\em Limit theorems for stochastics processes}.
\newblock Springer-{V}erlag, 2003.

\bibitem{Jeon}
I.~Jeon.
\newblock {E}xistence of {G}elling {S}olutions for
  {C}oagulation-{F}ragmentation {E}quations.
\newblock {\em Comm. {M}ath. {P}hys.}, 3:541--567, 1998.

\bibitem{Jourdain}
B.~Jourdain.
\newblock {N}onlinear {P}rocess {A}ssociated with the {D}iscrete {Smoluchowski}
  {C}oagulation-{F}ragmentation {E}quation.
\newblock {\em Markov {P}rocesses and {R}elated {F}ields}, (9):103--130, 2003.

\bibitem{Kolokoltsov}
V.~Kolokoltsov.
\newblock {T}he {C}entral {L}imit {T}heorem for the {S}moluchowski
  {C}oagulation {M}odel.
\newblock {\em arXiv:0708.0329v1 [math.PR]. {P}robability {T}heory and
  {R}elated {F}ields}, (1):87--153, 2010.

\bibitem{Lushnikov}
A.~Lushnikov.
\newblock {S}ome {N}ew {A}spects of {C}oagulation {T}heory.
\newblock {\em Izv. {A}kad. {N}auk {SSSR}, {S}er. {F}iz. {A}tmosfer. {I}
  {O}keana}, 14:738--743, 1978.

\bibitem{Marcus}
A.~Marcus.
\newblock {S}tochastic {C}oalescence.
\newblock {\em Technometrics}, 10:78--109, 1968.

\bibitem{Norris}
J.R. Norris.
\newblock Smoluchowski{'}s coagulation equation: uniqueness, non-uniqueness and
  hydrodynamic limit for the stochastic coalescent.
\newblock {\em Ann. Appl. Probab.}, 9(1):78--109, 1999.

\bibitem{Ziemer}
W.~P. Ziemer.
\newblock {\em Weakly differentiable functions, Graduate Texts in Mathematics}.
\newblock Springer-{V}erlag, 1989.

\end{thebibliography}
\end{document}